\theoremstyle{plain}
\newtheorem{definition}{Definition}
\newtheorem{theorem}{Theorem}
\newtheorem{corollary}{Corollary}
\newtheorem{proposition}{Proposition}
\newtheorem{lemma}{Lemma}
\newtheorem{remark}{Remark}
\numberwithin{equation}{section}
\numberwithin{theorem}{section}
\numberwithin{proposition}{section}
\numberwithin{definition}{section}
\numberwithin{remark}{section}
\numberwithin{corollary}{section}
\numberwithin{lemma}{section}
\newcommand*\bigcdot{\mathpalette\bigcdot@{.5}}
\newcommand*\bigcdot@[2]{\mathbin{\vcenter{\hbox{\scalebox{#2}{$\m@th#1\bullet$}}}}}
\begin{document}
	\sloppy
	
	\renewcommand{\thefootnote}{\arabic{footnote}}

	\begin{center}
		{\Large \textbf{Multidimensional McKean–Vlasov SDEs with mean reflection: well-posedness and existence of optimal control }} \\[0pt]
		~\\[0pt] 
		
		Imane Jarni, Ayoub Laayoun\ and Badr Missaoui

	\end{center}
	
	\renewcommand{\thefootnote}{\arabic{footnote}}

	\begin{abstract}
		In this work, we investigate the multidimensional Skorokhod problem for càdlàg processes, where the reflection is subject to a minimality condition depending on the law of the solution. We then apply these results to establish existence and uniqueness for multidimensional McKean–Vlasov stochastic differential equations with mean reflection. Finally, we address the existence of optimal relaxed controls for such equations.
	\end{abstract}
	{\bf Keywords}: Mean-field stochastic differential equations with constraints; mean reflection; Skorokhod problem; Mean-field interacting particles; Stochastic control; Compactification method.\\

	\section{Introduction}
	Let $\left(\Omega, \mathbb{F},\left(\mathcal{F}_t\right)_{t \leq T},\mathbb{P}\right)$ be a filtered probability space satisfying the usual conditions. Let $(B_t)_{0 \leq t \leq T}$ be an $\mathbb{R}^m$-valued Brownian motion, and let $(\tilde{N}_t)_{0 \leq t \leq T}$ be a compensated Poisson random measure on $\mathbb{R}^d \backslash\{0\}$ associated with a Lévy measure $\lambda$ independent of $(B_t)_{0 \leq t \leq T}$, and corresponding to a standard Poisson random measure $N(t,A)$. For any measurable subset $A$ in $\mathbb{R}^d \backslash\{0\}$ such that $\lambda(A)<\infty$, we have $$\tilde{N}_t(A)=N(t,A)-t \lambda(A),$$ where $N(t, A)$ satisfies $\mathbb{E}[N(t, A)]=t \lambda(A)$. The Lévy measure $\lambda(\cdot)$ is assumed to be a $\sigma$-finite measure on $\mathbb{R}^d \backslash\{0\}$ and to satisfy the integrability condition: $$\int_{\mathbb{R}^d \backslash\{0\}}\left(1 \wedge|z|^2\right) \lambda(\mathrm{d} z)<\infty.$$ 
	Let $\mathcal{D}$ be a convex subset of $\mathbb{R}^d$. For $t\in[0, T]$, we are interested in studying the following system of stochastic differential equations (SDEs) with constraint on the law of the solutions:
	$$(\Gamma)\left\{\begin{aligned}
		&X_t= X_{0}+ \int_{0}^{t} b\left(s, X_s, \mathcal{L}_{X_s}\right) \mathrm{d} s+ \int_{0}^{t} \sigma\left(s, X_s, \mathcal{L}_{X_s}\right) \mathrm{d} B_s \\
		& \quad\quad	+\int_{0}^{t}\int_{\mathbb{R}^d\backslash\{0\}}\beta\left(s,  X_{s^-}, \mathcal{L}_{X_{s^-}},z\right)  \tilde{N}(\mathrm{d} s, \mathrm{d} z)+ k_t, \quad t \in[0, T], \\
		&
		\mathbb{E}(\phi_t(X_t))\in \mathcal{D}, \\
		& \text{for every} \;z\in \mathcal{D},\; \int_0^t \langle \mathbb{E}(\phi_s(X_s)) - z,  \mathrm{d}k_s \rangle \leq 0
	\end{aligned}
	\right.
	$$
	where $\phi: [0,T]\times\mathbb{R}^d\to\mathbb{R}^d$.
	
	This type of reflection, where the constraint is imposed on the law of the process, was introduced  by Briand, Elie and Hu in the one-dimensional case for Backward stochastic differential equations (BSDEs) in \cite{briand18}. Later, Briand, Ghannoum, and Labart extended this type of reflection to SDEs with jumps in one-dimensional case in \cite{briand20}. Falkowski and Słomiński studied similar mean reflection problems with two constraints in the context of both SDEs and BSDEs in \cite{falkowski21} and \cite{falkowski22}. In the multi-dimensional setting, Briand et al. investigated SDEs with mean reflection in \cite{briand2}, where the function $\phi$ maps $\mathbb{R}^d$ to $\mathbb{R}$, meaning that the constraint domain is an interval in $\mathbb{R}$; their equations are driven by Brownian motion. Qu and Wang studied multi-dimensional BSDEs driven by Brownian motion with mean reflection in a multidimensional domain, in the case where $\phi$ is the identity map, see \cite{qu23}. Qian extended this to the case of jumps and convex domains with smooth boundaries in \cite{qian25}. In this work, we extend the aforementioned studies \cite{briand2}, \cite{qu23} and \cite{qian25} in the sense that we study McKean–Vlasov SDEs of Wiener–Poisson type, where $\phi$ is $\mathbb{R}^d$- valued mapping (not necessary the identity), and where the reflection domain is convex but not necessarily smooth. 
	
	In the first part of this paper, we study the Skorokhod problem with mean minimality condition for inputs that are right-continuous with left limits (càdlàg processes). We establish estimates for the solutions, which we then use to solve the system ($\Gamma$). Another result we present is a particle approximation of ($\Gamma$) by the following:

$$
\left\{
\begin{aligned}
	&X_t^{i,N}= X_{0}^i
	+ \int_0^t b\left(s, X_s^{i,N}, \tfrac{1}{N}\sum_{j=1}^N \delta_{X_s^{j,N}}\right)\,\mathrm{d}s
	+ \int_0^t \sigma\left(s, X_s^{i,N}, \tfrac{1}{N}\sum_{j=1}^N \delta_{X_s^{j,N}}\right)\,\mathrm{d}B_s^i \\
	&\quad\quad
	+ \int_0^t\int_{\mathbb{R}^d\setminus\{0\}}
	\beta\left(s, X_{s^-}^{i,N}, \tfrac{1}{N}\sum_{j=1}^N \delta_{X_{s^-}^{j,N}},z\right)
	\tilde{N}^i(\mathrm{d}s, \mathrm{d}z)
	+ K_t^{N}, \\
	&\tfrac{1}{N}\sum_{i=1}^N\phi_t(X_t^{i,N})\in \mathcal{D}, \\
	&\text{for every } z\in\mathcal{D},\;
	\int_0^t \langle 
	\tfrac{1}{N}\sum_{i=1}^N\phi_s(X_s^{i,N}) - z,\,
	\mathrm{d}K_s^{N}
	\rangle \leq 0
\end{aligned}
\right.
$$

	for $N\geq 1$ and $i\in\{1,\dots,N\}$, where $(X_{0}^i,B^i,\tilde{N}^i)$ are independent copies of $(X_0,B,\tilde{N})$. In classical particle approximations of McKean-Vlasov-type dynamics with reflection, the constraint in the particle system usually takes a similar form as in the limiting equation (see e.g., \cite{Adam},\cite{jarni25},\cite{sznitman}), which makes the computations somewhat simpler. The situation is different in the case of mean reflection. Indeed, in this setting, the constraint involves an expectation, which must also be approximated. This is natural, but it introduces additional computational complexity (see e.g., \cite{briand2}, \cite{briand20}). 
	In our work, the key idea behind both the existence and uniqueness of the Skorokhod problem as well as the particle approximation result, lies in the use of a bijection between the mean reflection problem and a Skorokhod problem in a time-dependent domain introduced in \cite{jarni25}. 
	
	In the second part of this paper, we address a stochastic optimal control problem. We consider a controlled McKean–Vlasov SDE with mean reflection
	\begin{equation*}		\begin{cases}
			X_t= x+ \displaystyle\int_{0}^{t} b(s, X_s,\mathcal{L}_{X_s}, u_s)\mathrm{d}s + \displaystyle\int_{0}^{t}\sigma(s,X_s, \mathcal{L}_{X_s}) \mathrm{d}B_s  \\
			\quad\quad+\displaystyle\int_{0}^{t}\int_{\mathbb{R}^d\backslash\{0\}}\beta\left(s,  X_{s^-}, \mathcal{L}_{X_{s^-}},z\right)  \tilde{N}(\mathrm{d} s, \mathrm{d} z)+ k_t,\\
			\mathbb{E}(X_t) \in \mathcal{D}, \\
			\text{for all}\;z\in \mathcal{D}, \displaystyle\int_0^t \langle \mathbb{E}(X_{s}) - z,  \mathrm{d}k_s \rangle \leq 0 . 
		\end{cases}
	\end{equation*}
	Here, the control process $(u_t)_{t\in[0,T]}$ takes values in a given action space $\mathbb{A}$. The objective is to minimize the cost functional
	\begin{equation*}
		J(u) = \mathbb{E}\left(\int_0^T  f(s, X_s, \mathcal{L}_{X_s}, u_s)\mathrm{d}s + \int_0^T h(s,X_s,\mathcal{L}_{X_s})\mathrm{d}|k|_s + g(X_T)\right)
	\end{equation*}
	over the set of admissible controls, i.e. progressively measurable processes with values in $\mathbb{A}$.
	
	In both deterministic and stochastic control settings, it is well known that an optimal control does not necessarily exist within the admissible space of strict controls, unless certain convexity assumptions are imposed. To overcome this limitation, a larger class $\mathcal{R}$ of admissible controls is introduced, known as \textit{relaxed controls}. In this framework, instead of selecting a specific control value $u_t \in \mathbb{A}$ at each time $t$, the controller chooses a probability measure $ q_t(\mathrm{d}a)$ on $\mathbb{A}$. This approach has been extensively developed in the context of stochastic control for diffusion processes, see e.g., \cite{El karoui}, \cite{haussman11}. An alternative approach via occupation measures was proposed in \cite{kurtz}, in the framework of general controlled martingale problems. 
	
	In the setting of SDEs, the first existence result for an optimal relaxed control, under uncontrolled diffusion, was established by Fleming \cite{Fleming} using compactification techniques. The case of SDEs with a controlled diffusion coefficient was later addressed by El-Karoui et al. \cite{El karoui}, who proved that under appropriate conditions, an optimal relaxed control exists. The existence of a strict optimal control can be guaranteed when the image of the action space $\mathbb{A}$ under the mapping $a\mapsto(b(t,x,a), f(t,x,a))$ is convex. This is the so-called Roxin-type convexity condition. For further details, we refer to \cite{El karoui},\cite{haussman11}, and \cite{Kushner}. In this work, we adopt the relaxed control framework to solve the optimal control problem associated with the mean-reflected McKean–Vlasov SDE, establish the existence of an optimal relaxed control, and show that it is a strict control under the Roxin convexity condition. 
	
	The paper is organized as follows. In section \ref{sk} we study the Skorokhod problem with mean minimality condition, establish existence and uniqueness, and derive estimates that will later be used for the McKean–Vlasov equation ($\Gamma$). Section \ref{sdes} is devoted to the study of existence and uniqueness for the system ($\Gamma$), as well as to the analysis of the propagation of chaos property for ($\Gamma$). In section \ref{control}, we consider a stochastic optimal control problem for a mean-reflected McKean–Vlasov SDE. We prove the existence of an optimal relaxed control and, under the Roxin convexity condition, we show that the optimal relaxed control is in fact strict. We also provide an approximation result for relaxed controls.

	We introduce the following notation.	
	\begin{itemize}
		\item $\mathcal{C}:$ The space of all bounded closed convex subsets of $\mathbb{R}^d$ with nonempty interiors, endowed with the Hausdorff metric $d_H$. That is, for any $D, D^{\prime} \in \mathcal{C}$,
		$$
		d_H\left(D, D^{\prime}\right)=\max \left(\sup _{x \in D} \operatorname{d}\left(x, D^{\prime}\right), \sup _{x \in D^{\prime}} \operatorname{d}(x, D)\right),
		$$
		where $\operatorname{d}(x, D)=\inf _{y \in D}|x-y|$, with $|\cdot|$ denoting the Euclidean norm on $\mathbb{R}^d.$
		\item[$\bullet$] If $E$ is a metric space equipped with a metric $\rho$. We denote by $\mathbb{D}([0,T],E)$ the space of all mappings $y:[0,T]\to E$ with càdlàg trajectories such that $y_0=y_{0^-}$ and $y_T=y_{T^-}$. 
		
		Let $\Lambda$ denote the set of strictly increasing, continuous mappings $\lambda : [0,T]\to[0,T]$ such that $\lambda_0=0$ and $\lambda_T=T$. For $\lambda\in\Lambda$, define $$\|\lambda\|_{\circ} := \sup_{0 \leq s < t \leq T} \left| \log\frac{ \lambda_t - \lambda_s}{t - s} \right|$$
		and $$d_{\circ}(x, y):= \inf_{\lambda \in \Lambda} \left\{ \|\lambda\|_{\circ} \vee \sup_{0 \leq t \leq T}  \rho(x(t), y(\lambda(t))) \right\}.$$
		\item For an $\mathbb{R}^d$-valued function $k$ with finite variation, define $|k|_t=\sum_{i=1}^d|k^i|_t$, where $|k^i|_t$ is the total variation of the component $k^i$ on $[0,t]$.
		\item For an $\mathbb{R}^d$-valued martingale $M=(M^1,\dots,M^d)$, let $[M]$ denotes $\sum_{i=1}^d[M^i]$, where $[M^i]$ represents the quadratic variation of the component $M^i$. 
		\item[$\bullet$] Let $D\in\mathcal{C}$, we denote by $\partial D$ the boundary of $D$. For $x\in\partial D$, $\mathcal{N}_{x}$ denotes the set of inward normal unit vectors at $x$, that is, $\eta \in \mathcal{N}_x$ if and only if for any $y\in D$, $\langle x-y, \eta\rangle \leq 0$. Here, $ \langle\cdot,\cdot \rangle$ denotes the usual inner product in $\mathbb{R}^d$.
		\item $\mathcal{U}$: The space of càdlàg, adapted processes $Y$ such that the family $\{Y_t,\;t\in [0,T]\}$ is uniformly integrable.
		\item For any continuous linear map $u$, we denote by $u^*$ its adjoint. Moreover, the operator norm of $u$ is defined by $\|u\|:=\sup_{x\neq 0}\frac{|u(x)|}{|x|}$.
		\item For a topological space $E$, we denote by $\mathcal{B}(E)$ its Borel $\sigma$-algebra, and by $\mathcal{P}(E)$, the set of probability measures on $(E,\mathcal{B}(E))$. For $p\geq 2$, we define $\mathcal{P}_p(E)$ as the set of all probability measures on $(E,\mathcal{B}(E))$ that have finite $p$-th order moments. 
		For $\mu,\nu\in\mathcal{P}_p(E)$, the Wasserstein distance of order $p$ is defined by the formula:
		$$
		W_{\{d,p\}}(\mu, \nu)=\inf \left\{\left[\int d(x,y)^p \pi(d x, d y)\right]^{1 / p} ; \pi \in \mathcal{P}(E \times E) \text { with marginals } \mu \text { and } \nu\right\}.$$
		Note that the distance considered in $\mathbb{R}^d$ is the Euclidean norm, i.e., $d(x,y)=|x-y|$ for $(x,y)\in\mathbb{R}^{2d}$.
		
		Let $t\in [0,T]$. On the space $\mathbb{D}([0,t],\mathbb{R}^{d})$, we define the distance $d(\omega_1,\omega_2):=d_t(\omega_1,\omega_2)=\sup_{s\in [0,t]}|\omega_1(s)-\omega_2(s)|$ for every $\omega_1,\omega_2\in\mathbb{D}([0,t],\mathbb{R}^{d})$. 
		
		In the following, $W_{\{t,p\}}$ denotes $W_{\{d_{t},p\}}$ on the space $\mathcal{P}_p(\mathbb{D}([0,t],\mathbb{R}^{d}))$, and $W_p$ denotes $W_{\{|\cdot|,p\}}$ on the space $\mathcal{P}_p(\mathbb{R}^{d})$.
		\item For $X\in \mathbb{D}([0,T],\mathbb{R}^{d})$, $\mathcal{L}_{X}$ denotes the law of the process $X$ on $\mathcal{P}(\mathbb{D}([0,T],\mathbb{R}^{d}))$. Similarly,  $\mathcal{L}_{X_t}$ denotes the law of $X_t$ on $\mathcal{P}(\mathbb{R}^d)$, where $t\in [0,T]$.
		\item If $\mu \in\mathcal{P}(\mathbb{D}([0,T],\mathbb{R}^d))$, we denote by $\mu_t$ the push-forward image of the measure $\mu$ under $\pi_t$, and by $\mu_{t^-}$ the push-forward image of the measure $\mu$ under $\pi_{t^-}$. Here, 
		$$\begin{array}{rcl}
			(\pi_t, \pi_{t^-}) : & \mathbb{D}([0,T], \mathbb{R}^d) & \to \mathbb{R}^d \times \mathbb{R}^d, \\ 
			& \omega & \mapsto (\omega_t, \omega_{t^-}).
		\end{array}
		$$
		\item For $a\in\mathbb{R}^d, \delta_{a}$ denotes the Dirac measure at $a$.
		\item We denote the predictable $\sigma$-algebra as $\mathscr{P}$.
		\item For $k,p\geq 1$, we denote as $\mathbb{H}^{p,k}$ the set of $\mathbb{R}^d$-valued, càdlàg, and adapted processes $(X_{t})_{t\geq 0}$ such that $\mathbb{E}\left(\int_{0}^T|X_s|^p\mathrm{d}s\right)<\infty.$
		\item For two stopping times $\tau$ and $\tau^{\prime}$ such that $\tau\leq \tau^{\prime}$, we denote as $\mathcal{S}_{\tau,\tau^{\prime}}^{p}$ the complete space of $\mathbb{R}^d$-valued, càdlàg, and adapted processes   $(X_{t})_{t\geq 0}$ such that $\|X\|_{\mathcal{S}^{p}_{\tau,\tau^{\prime}}}:=\mathbb{E}\left(\sup_{t\in [\tau,\tau^{\prime}]}|X_{t}|^p\right)^{\frac{1}{p}}<\infty$.
		\item Let $\mathbb{A}$ be a compact metric space. By  $\mathcal{V}$ we denote the space of Radon measures on $[0,T]\times\mathbb{A}$ whose marginal on $[0,T]$ is the Lebesgue measure. It is endowed with the topology of stable convergence of measures; that is, convergence is required for all bounded measurable functions $\psi(t,a)$ such that, for each fixed $t\in[0,T]$, the function $\psi(t,\cdot)$ is continuous.
	\end{itemize}
	\section{The Skorokhod problem with mean reflection}\label{sk}
	We define the Skorokhod reflection problem with mean reflection as the following.
	\begin{definition}\label{sk_det}
		Let $\mathcal{D}$ be a convex closed domain, and let $Y$ be a càdlàg adapted process such that $Y_{0}\in \mathcal{D}$. Let $\phi:[0, T]\times \mathbb{R}^d\rightarrow \mathbb{R}^d $ be a measurable map. We say that a pair $(X,k)\in\mathbb{D}([0,T],\mathbb{R}^{2d})$ is a solution to the reflection problem associated with $Y$ if $X$ is càdlàg and adapted, and $k$ is a deterministic map with finite variation, and for every $t\in [0,T]$, the following conditions hold:
		\begin{itemize}	
			\item[(i)]  $X_t=Y_t+k_t$.
			\item[(ii)] $\mathbb{E}(\phi_t(X_t)) \in\mathcal{D}$.
			\item [(iii)] For every $z\in\mathcal{D}$, we have $$\int_0^t \langle \mathbb{E}(\phi_s(X_s)) - z,  \mathrm{d}k_s \rangle \leq 0.$$
		\end{itemize}
		This problem will be denoted by $RP_{\mathcal{D}}(\phi,Y)$.
	\end{definition}
	\begin{remark}\label{rq_iii}
		Note that point $(iii)$ in Definition \ref{sk_det} implies the existence of a unit vector $\eta$ such that $\mathrm{d}k_s=\eta_s \mathrm{d}|k|_s$, with $\eta_s\in\mathcal{N}_{\mathbb{E}(\phi_s(X_s))}$ whenever $\mathbb{E}(\phi_s(X_s))\in\partial\mathcal{D}$, and $|k|_t=\int_0^t \mathds{1}_{\left\{\mathbb{E}(\phi_s(X_s)) \in \partial\mathcal{D}_s\right\}} \mathrm{d}|k|_s$. This can be shown using the same reasoning as in the proof of Lemma 2.1 in \cite{petit}.
	\end{remark}
	We consider the following assumption on $\phi$.
	\begin{itemize}
		\item[$\left(\mathbf{H}\right)$:] There exist $v\in\mathbb{D}([0,T],\mathbb{R}^{d})$ and a linear, continuous, and bijective map $ l:\mathbb{R}^d\rightarrow\mathbb{R}^d$ such that, for every $(t,x)\in [0,T]\times\mathbb{R}^d$, $\phi_t(x)=l\circ l^{*}(x)+v_t$.
	\end{itemize}
	In the following, we consider the map $\Phi:[0,T]\times \mathbb{R}^d\rightarrow \mathbb{R}^d$ defined by 
	\begin{equation}
		\Phi_t(x):= l(x)+ v_t\quad\text{for all}\; (t,x)\in [0,T]\times\mathbb{R}^d.\label{Phi}
	\end{equation}
	\begin{remark}\label{rq_bilip}
		\begin{itemize}
			\item[i)] Since $l$ is linear, continuous, and bijective, The Banach isomorphism theorem implies that it is bi-Lipschitz. That is, there exist $\gamma_1,\gamma_2>0$ such that for every $x,x^{\prime}\in\mathbb{R}^{d}$, we have
			\begin{equation}
				\gamma_1\left|x-x^{\prime}\right|\leq   \left|l(x)-l(x^{\prime})\right| \leq \gamma_2\left|x-x^{\prime}\right|. \label{bilip}
			\end{equation}
			\item[ii)] For a given $t\in [0,T]$, it is clear that $x\mapsto\Phi_t(x)$ is continuous, bijective and satisfies \eqref{bilip}.
		\end{itemize}
	\end{remark}
	\begin{lemma}\label{lemma_det}
		Assume $\left(\mathbf{H}\right)$. Let $\mathcal{D}$ be a set in $\mathcal{C}$ and define $D_t:=\Phi^{-1}_{t}(\mathcal{D})$. Then, the following properties hold
		\begin{itemize}
			\item[i)] The family  of time-dependent sets $D=\{D_t,\;t\in [0,T]\}$ is contained in $\mathcal{C}$. Moreover, we have  $\mathring{D}_t= \Phi_{t}^{-1}(\mathring{\mathcal{D}})$, and  $\partial D_t=\Phi_{t}^{-1}(\partial \mathcal{D})$.
			\item [ii)]The mapping $t\mapsto D_t $ is càdlàg with respect to the Hausdorff metric.
			\item [iii)] If $x\in\mathbb{D}([0,T],\mathbb{R}^{d})$, then the mappings $t\mapsto\Phi_{t}(x_t)$ and $t\mapsto\Phi^{-1}_{t}(x_t)$ are càdlàg.
		\end{itemize}
	\end{lemma}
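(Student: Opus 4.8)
\emph{Proof plan.} The whole lemma rests on the observation that, under $(\mathbf{H})$, each map $\Phi_t$ is an affine homeomorphism of $\mathbb{R}^d$, with explicit inverse $\Phi_t^{-1}(y)=l^{-1}(y-v_t)$, and that the dependence on $t$ enters only through the translation term $v_t$, which is càdlàg. Everything then reduces to three classical facts: homeomorphisms commute with the topological operations (interior, boundary, closure) and preserve convexity/closedness when affine; the Hausdorff metric is well behaved under translations and Lipschitz maps; and càdlàg paths remain càdlàg after composition with a continuous map.

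For i), I would start from Remark \ref{rq_bilip}: by the Banach isomorphism theorem $l$ is a linear homeomorphism, hence $\Phi_t$ and $\Phi_t^{-1}$ are homeomorphisms of $\mathbb{R}^d$. Writing $D_t=l^{-1}(\mathcal{D}-v_t)$, convexity follows since affine images of convex sets are convex; closedness follows since a homeomorphism maps closed sets to closed sets; boundedness follows since $l^{-1}$ is a bounded operator and $\mathcal{D}$ is bounded. The identities $\mathring D_t=\Phi_t^{-1}(\mathring{\mathcal{D}})$ and $\partial D_t=\Phi_t^{-1}(\partial\mathcal{D})$ are the general fact that a homeomorphism commutes with taking interior and boundary (each being expressible via preimages of open sets and closures); since $\mathring{\mathcal{D}}\neq\emptyset$, this gives $\mathring D_t\neq\emptyset$, so $D_t\in\mathcal{C}$ for every $t$.

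For ii), the plan is to prove the Lipschitz-in-$v$ estimate
$$d_H(D_s,D_t)\le \gamma_1^{-1}\,|v_s-v_t|,\qquad s,t\in[0,T],$$
by combining two elementary bounds: first $d_H(A+u,A+w)\le |u-w|$ for any nonempty $A\subset\mathbb{R}^d$ and $u,w\in\mathbb{R}^d$; second, by \eqref{bilip} the map $l^{-1}$ is $\gamma_1^{-1}$-Lipschitz, whence $d_H(l^{-1}(A),l^{-1}(B))\le \gamma_1^{-1}\,d_H(A,B)$. Applying these with $A=\mathcal{D}-v_s$, $B=\mathcal{D}-v_t$ yields the displayed inequality. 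Since $v\in\mathbb{D}([0,T],\mathbb{R}^d)$, right-continuity and the existence of left limits of $t\mapsto D_t$ in the Hausdorff metric follow immediately, the left limit at $t$ being $l^{-1}(\mathcal{D}-v_{t^-})$.

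For iii), this is stability of càdlàg paths under continuous maps: $\Phi_t(x_t)=l(x_t)+v_t$ and $\Phi_t^{-1}(x_t)=l^{-1}(x_t-v_t)$ are obtained from the càdlàg paths $x$ and $v$ by applying the (Lipschitz, hence continuous) maps $l$ and $l^{-1}$ and adding/subtracting a càdlàg path, so both are càdlàg, with left limits $l(x_{t^-})+v_{t^-}$ and $l^{-1}(x_{t^-}-v_{t^-})$ respectively. All steps are elementary; the only point requiring a little care is the Hausdorff-metric bookkeeping in ii) — the two bounds for $d_H$ under a translation and under a Lipschitz map — but these are routine, so I do not anticipate a genuine obstacle.
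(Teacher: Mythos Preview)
Your proposal is correct and follows essentially the same approach as the paper: for i) and iii) the arguments are identical, and for ii) both you and the paper derive the key Lipschitz bound $d_H(D_s,D_t)\le \gamma_1^{-1}|v_s-v_t|$ from the bi-Lipschitz property \eqref{bilip}, differing only in that you factor the estimate as ``translation then $l^{-1}$'' while the paper computes $d(x,D_t)$ directly. One minor improvement in your version is that you identify the left limit explicitly as $l^{-1}(\mathcal{D}-v_{t^-})$, whereas the paper appeals to completeness of $(\mathcal{C},d_H)$ to conclude existence of left limits.
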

	\begin{proof}
		\begin{itemize}
			\item [$i)$] Using the affine nature and the continuity of the mapping $x\mapsto \Phi_{t}(x)$ for $t\in[0,T]$, it follows that the family of time-dependent sets $D=\{D_t,\;t\in [0,T]\}$ remains in $\mathcal{C}$. The other properties follow easily from the fact that $x\mapsto\Phi_{t}(x)$ is continuous and bijective.
			\item[$ii)$] Let $s,t\in [0,T]$, and $x\in D_s$, we have 
			\begin{align*}
				d(x,D_t)&= \inf\{ |x-y|\;,y\in D_t\}\\
				&\leq \frac{1}{\gamma_1}\inf\{ |\Phi_{t}(x)-\Phi_{t}(y)|\;,y\in D_t\}\\
				&\leq \frac{1}{\gamma_1}|\Phi_{t}(x)-\Phi_{s}(x)|+ \frac{1}{\gamma_1}\inf\{ |\Phi_{s}(x)-\Phi_{t}(y)|\;,y\in D_t\}\\
				&\leq \frac{1}{\gamma_1}|\Phi_{t}(x)-\Phi_{s}(x)|+\frac{\gamma_2}{\gamma_1} \inf\{ | \Phi_{t}^{-1}(\Phi_{s}(x))-y|\;,y\in D_t\}\\
				&= \frac{1}{\gamma_1}|v_{t}-v_{s}|+\frac{\gamma_2}{\gamma_1} d(\Phi_{t}^{-1}(\Phi_{s}(x)),D_t).
			\end{align*}
			In the second and fourth lines, we used the bi-Lipschitz property of $x\mapsto \Phi_t(x)$. Since $x\in D_s$, it follows that $\Phi_{s}(x)\in\mathcal{D}$. This implies that  $\Phi_{t}^{-1}(\Phi_{s}(x))\in D_t$, therefore $d(\Phi_{t}^{-1}(\Phi_{s}(x)),D_t)=0$. Consequently,
			$$\sup_{x\in D_s}d(x,D_t)\leq  \frac{1}{\gamma_1}|v_t-v_s|.$$
			Similarly, we obtain
			$$\sup_{x\in D_t}d(x,D_s)\leq\frac{1}{\gamma_1} |v_t-v_s|.$$
			It follows that 
			\begin{equation*}
				d_H\left(D_t, D_s\right)\leq \frac{1}{\gamma_1}|v_t-v_s|
			\end{equation*}
			from this inequality and using the fact that $v$ is right-continuous, it follows that $t\mapsto D_t$ is right continuous with respect to the Hausdorff metric. Since $(\mathcal{C},d_H)$ is a complete space, then $t\mapsto D_t$ is left-limited with respect to the Hausdorff metric.
			\item[$iii)$] Let $s,t\in [0,T]$, we have
			\begin{align*}
				|\Phi_{t}(x_t)-\Phi_{s}(x_s)|&\leq |\Phi_{t}(x_t)-\Phi_{s}(x_t)|+|\Phi_{s}(x_t)-\Phi_{s}(x_s)|\\
				&\leq|v_t-v_s|+\gamma_2|x_t-x_s|
			\end{align*}
			In the last line, we have used the inequality \eqref{bilip}. Since $x,v\in\mathbb{D}([0,T],\mathbb{R}^{d})$, the result follows.
			
			Similarly, we obtain that $t\mapsto\Phi^{-1}_{t}(x_t)$ is càdlàg.
		\end{itemize}
	\end{proof}
	
	\begin{theorem}\label{th1}
		Assume $\left(\mathbf{H}\right)$. Let $\mathcal{D}$ be an element of $\mathcal{C}$, and let $Y$ be an element of $\mathcal{U}$ such that $\mathbb{E}(\phi_0(Y_0))\in\mathcal{D}$. Then, there exists a unique solution to the Skorokhod problem $RP_{\mathcal{D}}(\phi,Y)$. Furthermore, there exists a constant $C>0$ such that 
		\begin{equation}
			|k|_T\leq C\left( \sup_{s\in [0,T]}|\mathbb{E}(Y_s)|^2+1\right). \label{estim_var_k}
		\end{equation}
	\end{theorem}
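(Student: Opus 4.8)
The strategy is to transport the problem, via the affine bijections $\Phi_t$ of \eqref{Phi}, onto a \emph{deterministic} Skorokhod problem in the time-dependent convex domain $D=\{D_t\}_{t\in[0,T]}$ of Lemma~\ref{lemma_det}, solve the latter, and carry well-posedness and the variation bound back. Two preliminary observations. First, since $Y\in\mathcal{U}$ the family $\{Y_t\}_{t\in[0,T]}$ is uniformly integrable, so (by uniform integrability and the càdlàg property of $Y$) $t\mapsto y_t:=\mathbb{E}(Y_t)$ is a bounded deterministic càdlàg path, and hence so is $\tilde y_t:=l^{*}(y_t)$; moreover $\mathbb{E}(\phi_0(Y_0))=l\circ l^{*}(\mathbb{E}(Y_0))+v_0=\Phi_0(\tilde y_0)\in\mathcal{D}$, so by Lemma~\ref{lemma_det}(i), $\tilde y_0\in D_0$. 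Second, assumption $(\mathbf{H})$ gives the algebraic identities driving everything: for any integrable $\mathbb{R}^d$-valued random variable $Z$ one has $\mathbb{E}(\phi_t(Z))=\Phi_t(l^{*}\mathbb{E}(Z))$, and for $u,w\in\mathbb{R}^d$, $\Phi_t(u)-\Phi_t(w)=l(u-w)$ and $\langle l(u),(l^{*})^{-1}(w)\rangle=\langle u,w\rangle$.

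For existence, I would first invoke well-posedness of the deterministic Skorokhod problem in $D$ with input $\tilde y$ (legitimate since $\tilde y_0\in D_0$): by Lemma~\ref{lemma_det}, $D$ is a càdlàg $\mathcal{C}$-valued family, and in fact $D_t=l^{-1}(\mathcal{D})-l^{-1}(v_t)$ is a fixed bounded closed convex set with nonempty interior translated by the càdlàg path $-l^{-1}(v_t)$, so via the translation $\hat y_t=\tilde y_t+l^{-1}(v_t)$ one is reduced to the classical Skorokhod problem in a fixed convex domain (cf. \cite{jarni25} for this reduction in the mean-reflection setting). This yields a unique deterministic pair $(\tilde x,\tilde k)$ with $\tilde x$ càdlàg, $\tilde k$ of finite variation, $\tilde x_t=\tilde y_t+\tilde k_t$, $\tilde x_t\in D_t$, the associated minimality condition, and a bound $|\tilde k|_T\le C_1(1+\sup_{t\in[0,T]}|\tilde y_t|^{2})$. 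Setting $k_t:=(l^{*})^{-1}(\tilde k_t)$ and $X_t:=Y_t+k_t$, the pair $(X,k)$ is càdlàg and adapted, $k$ deterministic of finite variation, and (i) holds by construction. Since $l^{*}\mathbb{E}(X_t)=l^{*}y_t+l^{*}k_t=\tilde y_t+\tilde k_t=\tilde x_t$, the identities above give $\mathbb{E}(\phi_t(X_t))=\Phi_t(\tilde x_t)\in\mathcal{D}$, which is (ii). For (iii), fix $z\in\mathcal{D}$ and set $w_s:=\Phi_s^{-1}(z)$, a càdlàg path with $w_s\in D_s$ (by Lemma~\ref{lemma_det}(i),(iii)); then $\mathbb{E}(\phi_s(X_s))-z=\Phi_s(\tilde x_s)-\Phi_s(w_s)=l(\tilde x_s-w_s)$ and $\mathrm{d}k_s=(l^{*})^{-1}\mathrm{d}\tilde k_s$, hence $\langle\mathbb{E}(\phi_s(X_s))-z,\mathrm{d}k_s\rangle=\langle\tilde x_s-w_s,\mathrm{d}\tilde k_s\rangle$, and the minimality of $\tilde k$ (tested against $w$, or equivalently through its Radon--Nikodym/normal-cone form) gives $\int_0^t\langle\tilde x_s-w_s,\mathrm{d}\tilde k_s\rangle\le 0$. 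This proves (iii) and hence existence.

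For uniqueness, if $(X,k)$ solves $RP_{\mathcal{D}}(\phi,Y)$, I would reverse the correspondence: $\tilde k_t:=l^{*}(k_t)$ is deterministic of finite variation and $\tilde x_t:=l^{*}\mathbb{E}(X_t)=\tilde y_t+\tilde k_t$ satisfies $\tilde x_t\in D_t$ by (ii) and Lemma~\ref{lemma_det}(i). By Remark~\ref{rq_iii}, (iii) yields $\mathrm{d}k_s=\eta_s\,\mathrm{d}|k|_s$ with $\eta_s\in\mathcal{N}_{\mathbb{E}(\phi_s(X_s))}$ on $\{\mathbb{E}(\phi_s(X_s))\in\partial\mathcal{D}\}$; since, for $w\in D_s$, $\langle\tilde x_s-w,\,l^{*}\eta_s\rangle=\langle\Phi_s(\tilde x_s)-\Phi_s(w),\,\eta_s\rangle\le 0$, and $\Phi_s$ maps $\partial D_s$ onto $\partial\mathcal{D}$ and $\tilde x_s\in\partial D_s$ iff $\mathbb{E}(\phi_s(X_s))\in\partial\mathcal{D}$, the (suitably normalized) vector $l^{*}\eta_s$ lies in the inward normal cone of $D_s$ at $\tilde x_s$; thus $(\tilde x,\tilde k)$ solves the deterministic Skorokhod problem in $D$ with input $\tilde y$. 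By the deterministic uniqueness, $(\tilde x,\tilde k)$ is uniquely determined, hence so are $k=(l^{*})^{-1}\tilde k$ and $X=Y+k$, which together with the previous paragraph gives existence and uniqueness. Finally, \eqref{estim_var_k} follows from $|k|_T\le C_l\,|\tilde k|_T$ for a constant $C_l$ depending only on $l$ and $d$, the deterministic bound $|\tilde k|_T\le C_1(1+\sup_t|\tilde y_t|^{2})$, the inequality $|\tilde y_t|=|l^{*}(\mathbb{E}(Y_t))|\le\|l\|\,|\mathbb{E}(Y_t)|$ (from \eqref{bilip}), and the boundedness of $v$, after relabelling the constants.

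With the deterministic time-dependent Skorokhod theory available, the argument is essentially bookkeeping; the delicate point I expect is the exact equivalence between condition (iii) — imposed only against constants $z\in\mathcal{D}$ — and the minimality condition of the deterministic problem (naturally phrased against all càdlàg paths valued in $D_\cdot$, or via inward normal cones), which is precisely what Remark~\ref{rq_iii} (following \cite{petit}) supplies, together with a careful tracking of the domain- and $l$-dependent constants needed to reach the clean quadratic bound \eqref{estim_var_k}.
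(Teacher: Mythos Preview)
Your proposal is correct and follows essentially the same route as the paper: transport the problem via $l^{*}$ to the deterministic Skorokhod problem $RP_{D}(l^{*}(\mathbb{E}(Y)))$ in the time-dependent domain $D_t=\Phi_t^{-1}(\mathcal{D})$ of Lemma~\ref{lemma_det}, solve it using the theory from \cite{jarni25} (your translation-to-a-fixed-domain remark is equivalent to the paper's direct verification, in its Step~1, of the interior-point hypothesis of Theorem~\ref{prop_A} via $a_t=\Phi_t^{-1}(\alpha)$), pull back through $(l^{*})^{-1}$, and deduce \eqref{estim_var_k} from the deterministic variation bound after absorbing $\sup_t|a_t|$ and $\|v\|_\infty$ into the constant. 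The algebraic identities you record and the use of Remark~\ref{rq_iii} for the reverse direction are exactly the ingredients the paper uses.
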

	\begin{proof}
		Let $D$ be the family of time-dependent sets defined in Lemma \ref{lemma_det}, and let $y$ be the function defined by $y_t:=\mathbb{E}(Y_t)$ for $t\in [0,T]$. Since $Y\in\mathcal{U}$, it follows that $y$ belongs to $\mathbb{D}([0,T],\mathbb{R}^d)$. We will establish the existence and uniqueness of solution of the Skorokhod problem $RP_{\mathcal{D}}(\phi,Y)$ by applying Theorem \ref{prop_A} in the appendix to the function $\overline{y}:t\mapsto l^{*}(y_t)$. To do this, we first verify the assumptions of Theorem \ref{prop_A}.
		\begin{itemize}
			\item \textbf{Step 1}: Find $a\in\mathbb{D}([0,T],\mathbb{R}^d)$ such that, for all $t\in [0,T]$, $a_t\in \mathring{D_t}$ and $\inf_{t\in[0,T]}d(a_t,\partial D_t)>0$.
			
			Let $\alpha$ be element of $\mathring{\mathcal{D}}$, noting that $\mathring{\mathcal{D}}\neq\emptyset$. For $t\in [0,T]$, define $a$ by setting $a_t=\Phi_{t}^{-1}(\alpha)$. By $(i)$ of Lemma \ref{lemma_det}, it follows that $a_t\in\mathring{D_t}$. Moreover by $(iii)$ of Lemma \ref{lemma_det}, the function $a$ is càdlàg. 
			
			We now compute the distance to the boundary
			\begin{align*}
				d(a_t,\partial D_t) &=\inf\{ |a_t-y|,\;y\in\partial D_t \}\\
				&\geq\frac{1}{\gamma_2} \inf\{ |\alpha-\Phi_{t}(y)|,\;y\in \Phi_{t}^{-1}(\partial\mathcal{D}) \}\\
				&\geq \frac{1}{\gamma_2} d(\alpha,\partial\mathcal{D})
			\end{align*}
			In the previous lines, we used the fact that $x\mapsto \Phi_t(x)$ is bi-Lipschitz. It follows that
			$$\inf_{t\in [0,T]}d(a_t,\partial D_t)\geq \frac{1}{\gamma_2}d(\alpha,\partial\mathcal{D})>0.$$
			\item \textbf{Step 2}: We establish the existence and uniqueness of solution of $RP_{\mathcal{D}}(\phi,Y)$.
			
			By Theorem \ref{prop_A}, there exists a unique solution $(\overline{x},\overline{k})\in\mathbb{D}([0,T],\mathbb{R}^{2d})$ such that $\overline{k}$ has finite variation, and for every $t\in [0,T]$,  $\overline{x}_t=\overline{y}_t+\overline{k}_t\in D_t$. Moreover, for every $\overline{z}\in\mathbb{D}([0,T],\mathbb{R}^{d})$ satisfying $\overline{z}_t\in D_t$, we have  
			$$\int_0^t \langle \overline{x}_{s} - \overline{z}_s,  \mathrm{d}\overline{k}_s \rangle \leq 0.$$
			Here, $\mathrm{d}\overline{k}_t=\overline{\eta}_t \mathrm{d}|\overline{k}|_t$ with $\overline{\eta}_t\in\mathcal{N}_{\overline{x}_t}$ whenever $\overline{x}_t\in\partial D_t$.
			
			Now, let $t\in [0,T]$. Define $X_t=Y_t+k_t$, where $$k_t:=(l^{*})^{-1}(\overline{k}_t)= \int_0^t (l^{*})^{-1}(\overline{\eta}_s) \mathrm{d}|\overline{k}|_s.$$ Since $l$ is linear. It follows that 
			\begin{align*}
				\mathbb{E}(\phi_t(X_t))&=\mathbb{E}(\phi_t(Y_t+k_t))\\
				&=\mathbb{E}(l\circ l^*(Y_t+k_t)+v_t)\\
				&=l\circ l^*(y_t+k_t)+v_t\\
				&=l(\overline{y}_t+\overline{k}_t)+v_t\\
				&=\Phi_{t}(\overline{x}_t)\in\mathcal{D}
			\end{align*}
			
			Next, we verify point $(iii)$ of Definition \ref{sk_det}. For $z\in\mathcal{D}$, we have
			\begin{align*}
				\int_{0}^{t} \langle \mathbb{E}(\phi_s(X_s))- z,  \mathrm{d}k_s \rangle&=  \int_{0}^{t} \langle \Phi_{s}(\overline{x}_s)- z, (l^{*})^{-1}(\overline{\eta}_s)  \rangle \mathrm{d}|\overline{k}|_s\\
				&= \int_{0}^{t} \langle l(\overline{x}_s)+v_s- z, (l^{*})^{-1}(\overline{\eta}_s)  \rangle \mathrm{d}|\overline{k}|_s\\
				&= \int_{0}^{t} \langle l(\overline{x}_s-l^{-1}(z-v_s)), (l^{*})^{-1}(\overline{\eta}_s)  \rangle \mathrm{d}|\overline{k}|_s\\
				&= \int_{0}^{t} \langle \overline{x}_s-l^{-1}(z-v_s), \overline{\eta}_s  \rangle \mathrm{d}|\overline{k}|_s
			\end{align*}
			In the last line, we used the adjoint property. Since $\Phi_s(l^{-1}(z-v_s))=z\in\mathcal{D}$, it follows that $l^{-1}(z-v_s)\in D_s$. As $(\overline{x},\overline{k})$ is a solution of $RP_{D}(\overline{y})$, we conclude
			$$\int_{0}^{t} \langle \overline{x}_s-l^{-1}(z-v_s), \overline{\eta}_s  \rangle \mathrm{d}|\overline{k}|_s\leq 0.$$
			Thus, $$ \int_{0}^{t} \langle \mathbb{E}(\phi_s(X_s))- z_s,  \mathrm{d}k_s \rangle\leq 0.$$
			
			Finally, we note that we can easily verify that if $(X,k)$ is a solution of $RP_{\mathcal{D}}(\phi,Y)$, then $(l^*(\mathbb{E}(X)),l^*(k))$ is a solution of $RP_{D}(l^*(y))$. This implies the uniqueness of the reflection problem $RP_{\mathcal{D}}(\phi,Y)$, completing the proof of its existence and uniqueness.
			\item \textbf{Step 3}: We prove inequality \eqref{estim_var_k}.
			We have 
			\begin{align*}
				|k|_T&=\sum_{i=1}^d|k^i|_T\\
				&=\sum_{i=1}^d \int_0^T|(l^{*})^{-1}_i(\overline{\eta}_s)| \mathrm{d}|\overline{k}|_s\\
				&\leq \frac{d}{\gamma_1} |\overline{k}|_T.
			\end{align*}
			where $(l^{*})^{-1}_i$ is the $i$-th component function of the linear operator $(l^{*})^{-1}$. By the inequality in Theorem \ref{prop_A}, we obtain
			\begin{align*}
				|k|_T&\leq\frac{dM}{\gamma_1}\left(\sup_{s\in [0,T]}|\overline{y}_s|^2+ \sup_{s\in [0,T]}|a_s|^2 +1\right)\\
				&\leq \frac{dM}{\gamma_1}\left( \sup_{s\in [0,T]}|l^{*}(y_s)|^2+ \sup_{s\in [0,T]}|\Phi_s^{-1}(\alpha)|^2 +1\right)
			\end{align*}
			Using the continuity of $l^*$ and the expression of $x\mapsto \Phi_t(x)$, we can find a constant $C$, depending on $\gamma_1,\gamma_2,\alpha,d$, and $\sup_{0\leq s\leq T}|v_s|$, such that
			$$|k|_T\leq C \left(\sup_{s\in [0,T]}|y_s|^2+1\right).$$
		\end{itemize}
	\end{proof}
	Now, we establish the following a priori estimate for the solutions of the problem $RP_{\mathcal{D}}(\phi,Y)$.
	\begin{proposition}\label{estime11}
		Assume $\left(\mathbf{H}\right)$. Let $Y,\tilde{Y}\in\mathcal{U}$, and let $\mathcal{D},\mathcal{\tilde{D}}\in\mathcal{C}$ be such that $\mathbb{E}(\phi_0(Y_0))\in\mathcal{D}$ and $\mathbb{E}(\phi_0(\tilde{Y}_0))\in\mathcal{\tilde{D}}$. Let $(X,k)$ and $(\tilde{X},\tilde{k})$ denote the solutions associated with $RP_{\mathcal{D}}(\phi,Y)$ and $RP_{\mathcal{\tilde{D}}}(\phi,\tilde{Y})$, respectively. Then, there exists a positive constant $C$ such that
		\begin{multline}
			|k_t-\tilde{k}_t|^2\leq \\C\left( |y_t-\tilde{y}_t|^2+ d_H(\mathcal{\tilde{D}},\mathcal{D})(|k|_t+|\tilde{k}|_t)+\int_{0}^{t}\langle y_t-y_s+\tilde{y}_s-\tilde{y}_t,l\circ l^{*}(\eta_s)\mathrm{d}|k|_s-l\circ l^{*}(\tilde{\eta}_s)\mathrm{d}|\tilde{k}|_s\rangle\right)\label{estime1}
		\end{multline}
		where $k_t=\displaystyle\int_0^t \eta_s\mathrm{d}|k|_s$ and $k_t=\displaystyle\int_0^t \tilde{\eta}_s\mathrm{d}|\tilde{k}|_s$, by Remark \ref{rq_iii}.
	\end{proposition}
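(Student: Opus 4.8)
The plan is to transport both mean–reflection problems to Skorokhod problems in time–dependent domains through the affine isomorphism $\Phi$, compare the two reflection terms there, and pull the resulting estimate back through $l$ and $l^{*}$. Put $y_t:=\mathbb{E}(Y_t)$, $\tilde y_t:=\mathbb{E}(\tilde Y_t)$ and, exactly as in Step~2 of the proof of Theorem~\ref{th1}, set $\bar y:=l^{*}(y)$, $\bar y':=l^{*}(\tilde y)$, $\bar k:=l^{*}(k)$, $\bar k':=l^{*}(\tilde k)$, $\bar x:=\bar y+\bar k$, $\bar x':=\bar y'+\bar k'$. Then $(\bar x,\bar k)$ solves $RP_{D}(\bar y)$ with $D_t:=\Phi_t^{-1}(\mathcal D)$ and $(\bar x',\bar k')$ solves $RP_{\tilde D}(\bar y')$ with $\tilde D_t:=\Phi_t^{-1}(\tilde{\mathcal D})$ (the same $\Phi$, since $\phi$ is the same). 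In particular $\bar k_0=\bar k'_0=0$, $\bar k,\bar k'$ are deterministic of finite variation, $\mathrm d\bar k_s=l^{*}(\eta_s)\,\mathrm d|k|_s=\bar\eta_s\,\mathrm d|\bar k|_s$ and $\mathrm d\bar k'_s=l^{*}(\tilde\eta_s)\,\mathrm d|\tilde k|_s=\bar\eta'_s\,\mathrm d|\bar k'|_s$ for suitable unit vectors $\bar\eta_s,\bar\eta'_s$, and Theorem~\ref{prop_A} together with the reasoning behind Remark~\ref{rq_iii} guarantees $\bar\eta_s\in\mathcal N_{\bar x_s}$ ($\mathrm d|\bar k|_s$-a.e.) and $\bar\eta'_s\in\mathcal N_{\bar x'_s}$ ($\mathrm d|\bar k'|_s$-a.e.).

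Two elementary observations come first. Since $\Phi_s^{-1}$ is bi-Lipschitz of ratio $\gamma_1^{-1}$ (Remark~\ref{rq_bilip}) and maps $\mathcal D$ onto $D_s$ and $\tilde{\mathcal D}$ onto $\tilde D_s$, one gets $d_H(D_s,\tilde D_s)\le\gamma_1^{-1}d_H(\mathcal D,\tilde{\mathcal D})$ for every $s$. And since $\bar k-\bar k'$ is càdlàg, of finite variation and null at $0$, integration by parts yields
$$|\bar k_t-\bar k'_t|^2=2\int_0^t\langle\bar k_s-\bar k'_s,\mathrm d\bar k_s-\mathrm d\bar k'_s\rangle-\sum_{s\le t}|\Delta\bar k_s-\Delta\bar k'_s|^2\le 2\int_0^t\langle\bar k_s-\bar k'_s,\mathrm d\bar k_s-\mathrm d\bar k'_s\rangle.$$
Substituting $\bar k_s-\bar k'_s=(\bar x_s-\bar x'_s)-(\bar y_s-\bar y'_s)$ splits the right-hand side into an $\bar x$-part and a $\bar y$-part. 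For the $\bar x$-part: at a point $s$ with $\bar x_s\in\partial D_s$, let $z_s$ be the metric projection of $\bar x'_s\in\tilde D_s$ onto the closed convex set $D_s$, so $|z_s-\bar x'_s|=d(\bar x'_s,D_s)\le d_H(D_s,\tilde D_s)$; then $\langle\bar x_s-\bar x'_s,\bar\eta_s\rangle=\langle\bar x_s-z_s,\bar\eta_s\rangle+\langle z_s-\bar x'_s,\bar\eta_s\rangle\le0+|z_s-\bar x'_s|$, and with the symmetric bound for $\bar k'$ this gives $\int_0^t\langle\bar x_s-\bar x'_s,\mathrm d\bar k_s-\mathrm d\bar k'_s\rangle\le\gamma_1^{-1}d_H(\mathcal D,\tilde{\mathcal D})\,(|\bar k|_t+|\bar k'|_t)$.

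For the $\bar y$-part I would use the identity $\int_0^t\langle g_s,\mathrm d\mu_s\rangle=\langle g_t,\mu_t\rangle-\int_0^t\langle g_t-g_s,\mathrm d\mu_s\rangle$, valid for càdlàg $g$ and càdlàg finite-variation $\mu$ with $\mu_0=0$, applied with $g=\bar y-\bar y'$ and $\mu=\bar k-\bar k'$: it turns $-\int_0^t\langle\bar y_s-\bar y'_s,\mathrm d\bar k_s-\mathrm d\bar k'_s\rangle$ into $-\langle\bar y_t-\bar y'_t,\bar k_t-\bar k'_t\rangle+\int_0^t\langle(\bar y_t-\bar y_s)-(\bar y'_t-\bar y'_s),\mathrm d\bar k_s-\mathrm d\bar k'_s\rangle$. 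Collecting the two parts, bounding $|\langle\bar y_t-\bar y'_t,\bar k_t-\bar k'_t\rangle|$ by Young's inequality so that a term $\tfrac12|\bar k_t-\bar k'_t|^2$ can be absorbed on the left, and finally transporting back via $\bar y=l^{*}(y)$, $\bar y'=l^{*}(\tilde y)$, $\mathrm d\bar k_s=l^{*}(\eta_s)\mathrm d|k|_s$, $\mathrm d\bar k'_s=l^{*}(\tilde\eta_s)\mathrm d|\tilde k|_s$, the adjoint identity $\langle l^{*}(u),l^{*}(w)\rangle=\langle u,(l\circ l^{*})(w)\rangle$ and the bi-Lipschitz bounds ($\gamma_1|k_t-\tilde k_t|\le|\bar k_t-\bar k'_t|$, $|\bar y_t-\bar y'_t|\le\gamma_2|y_t-\tilde y_t|$, $|\bar k|_t\le\gamma_2|k|_t$, $|\bar k'|_t\le\gamma_2|\tilde k|_t$), one recovers precisely the three terms on the right-hand side of \eqref{estime1}; in particular $\int_0^t\langle(\bar y_t-\bar y_s)-(\bar y'_t-\bar y'_s),\mathrm d\bar k_s-\mathrm d\bar k'_s\rangle$ becomes $\int_0^t\langle y_t-y_s+\tilde y_s-\tilde y_t,(l\circ l^{*})(\eta_s)\mathrm d|k|_s-(l\circ l^{*})(\tilde\eta_s)\mathrm d|\tilde k|_s\rangle$, with a constant $C$ depending only on $\gamma_1,\gamma_2$ and $d$.

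The step I expect to require the most care is the treatment of the jumps of $\bar k$ and $\bar k'$: one must check that the normal-cone property $\bar\eta_s\in\mathcal N_{\bar x_s}$ supplied by Theorem~\ref{prop_A} is available $\mathrm d|\bar k|_s$-a.e., including at the jump times (where it relies on the same Lemma~2.1 of \cite{petit}-type argument invoked in Remark~\ref{rq_iii}), and that the nonnegative jump sum arising in the integration-by-parts formula may simply be discarded. Once these points are secured, the rest is a routine chain combining the projection estimate, the freezing identity, Cauchy–Schwarz and Young's inequality, and the linear change of variables.
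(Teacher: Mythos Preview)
Your proof is correct and follows the same overall strategy as the paper: transport both mean-reflection problems to Skorokhod problems in the time-dependent domains $D_t=\Phi_t^{-1}(\mathcal D)$ and $\tilde D_t=\Phi_t^{-1}(\tilde{\mathcal D})$ via $l^{*}$, compare the reflection terms there, and pull the estimate back using the adjoint identity and the bi-Lipschitz bounds on $l$.

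The only substantive difference is that the paper invokes Proposition~\ref{prop_B} from the Appendix (the a~priori estimate for $RP_D$ in time-dependent domains, taken from \cite{jarni25}) as a black box, whereas you reprove that inequality inline via integration by parts on $|\bar k_t-\bar k'_t|^2$, the projection bound $\langle \bar x_s-\bar x'_s,\bar\eta_s\rangle\le d_H(D_s,\tilde D_s)$, the ``freezing'' identity, and Young's inequality. Your derivation is exactly how Proposition~\ref{prop_B} is obtained, so the two proofs coincide once that black box is unpacked; your version is simply more self-contained. The point you flag about the normal-cone property holding $\mathrm d|\bar k|_s$-a.e.\ (including at jump times) is indeed handled by the argument behind Remark~\ref{rq_iii}, and discarding the nonnegative jump sum in the integration by parts is legitimate.
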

	\begin{proof}
		For $t\in [0,T]$, we set $y_t=\mathbb{E}(Y_t)$, $\tilde{y}_t=\mathbb{E}(\tilde{Y}_t)$, $D=\{\Phi_t^{-1}(\mathcal{D}),\;t\in [0,T]\}$, and $\tilde{D}=\{\Phi_t^{-1}(\mathcal{\tilde{D}}),\;t\in [0,T]\}$. Also, $y^{1}_t=l^*(y_t)$ and $y^{2}_t=l^*(\tilde{y}_t)$. Let $(x^1,k^1)$ and $(x^2,k^2)$ be the solutions of $RP_D(y^1)$ and $RP_D(y^2)$, respectively. From the proof of Theorem \ref{th1}, $k=(l^{*})^{-1}(k^1)$ and $\tilde{k}=(l^{*})^{-1}(k^2)$. Therefore, we have
		\begin{multline*}
			|k_t-\tilde{k}_t|^2\leq \|(l^{*})^{-1}\|^2|k^1_t-k^2_t|^2\\
			\leq 2\|l^{-1}\|^2\left(|y_t^1-y_{t}^2|^2+ 2\sup_{s\in [0,t]} d_H(D_s,\tilde{D}_s)(|k^1|_t+|k^2|_t)+\int_{0}^{t}\langle y_t^1-y_s^1+y_s^2-y^2_t,\mathrm{d}(k_s^1-k_s^2) \rangle\right)
		\end{multline*}
		where in the second line, we used the estime \eqref{ineq_estim} of Proposition \ref{prop_B} in the Appendix.
		
		On the one hand, it is clear that $$|y_t^1-y_{t}^2|=|l^{*}(y_t)-l^*(\tilde{y}_t)|^2\leq \|l\|^2|y_t-\tilde{y}_t|^2\leq \gamma_2^2|y_t-\tilde{y}_t|^2.$$
		
		On the other hand, since $k^1_t=l^*(k_t)=\displaystyle\int_0^t l^*(\eta_s) \mathrm{d}|k|_s$ and $k^2_t=l^*(\tilde{k}_t)=\displaystyle\int_0^t l^*(\tilde{\eta}_s) \mathrm{d}|\tilde{k}|_s$, we obtain
		\begin{align*}
			\int_{0}^{t}\langle y_t^1-y_s^1+y_s^2-y^2_t,\mathrm{d}(k_s^1-k_s^2) \rangle&=\int_{0}^{t}\langle l^*(y_t)-l^*(y_s)+l^*(\tilde{y}_s)-l^*(\tilde{y}_t),l^*(\eta_s)\mathrm{d}|k|_s-l^*(\tilde{\eta}_s)\mathrm{d}|\tilde{k}|_s\rangle\\
			&=\int_{0}^{t}\langle y_t-y_s+\tilde{y}_s-\tilde{y}_t,l\circ l^{*}(\eta_s)\mathrm{d}|k|_s-l\circ l^{*}(\tilde{\eta}_s)\mathrm{d}|\tilde{k}|_s\rangle
		\end{align*}
		where, in the last line, we used the adjoint property.
		
		Now, we estimate $\sup_{s\in [0,t]} d_H(D_s,\tilde{D}_s)$. Let $s\in [0,T]$ and  $x\in\tilde{D}_s$, we use the Lipschitz property of $x\mapsto\Phi_t(x)$ we get
		$$d(x, D_s)\leq \gamma_2d(\Phi_s(x),\mathcal{D})$$
		since $\Phi_s(x)\in\tilde{\mathcal{D}}$, we have $d(x, D_s)\leq \gamma_2\sup_{x\in\mathcal{\tilde{D}}}d(x,\mathcal{D})$. Therefore, 
		$$d(x, D_s)\leq \gamma_2d_H(\mathcal{\tilde{D}},\mathcal{D}).$$
		In a similar way, for every $x\in D_s$, we obtain
		$$d(x, \tilde{D}_s)\leq \gamma_2d_H(\mathcal{\tilde{D}},\mathcal{D}).$$
		It follows that
		$$\sup_{s\in [0,t]} d_H(D_s,\tilde{D}_s)\leq \gamma_2 d_H(\mathcal{\tilde{D}},\mathcal{D})$$
	from Remark \ref{rq_iii}, the relationships between $k$ and $k^1$, and between $\tilde{k}$ and $k^2$, and since $l^*$ is linear and bijective, we can find a constant $C\geq 0$ such that
		$$|k_t-\tilde{k}_t|^2\leq C( |y_t-\tilde{y}_t|^2+ d_H(\mathcal{\tilde{D}},\mathcal{D})(|k|_t+|\tilde{k}|_t)+\int_{0}^{t}\langle y_t-y_s+\tilde{y}_s-\tilde{y}_t,l\circ l^{*}(\eta_s)\mathrm{d}|k|_s-l\circ l^{*}(\tilde{\eta}_s)\mathrm{d}|\tilde{k}|_s\rangle.$$

	\end{proof}
	\begin{corollary}
		Under the same assumptions as in Theorem \ref{th1}, except that $\mathcal{D}$ is unbounded, the existence and uniqueness of the reflection problem $RP_{\mathcal{D}}(\phi,Y)$ still hold, along with the inequality \eqref{estim_var_k}.
	\end{corollary}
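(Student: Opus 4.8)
The plan is to deduce the unbounded case from the bounded one (Theorem~\ref{th1}) by truncating $\mathcal D$ with large balls, the key point being that the constant in \eqref{estim_var_k} can be kept uniform along the truncation. Fix $\alpha\in\mathring{\mathcal D}$ (still nonempty, since dropping boundedness leaves $\mathcal D$ closed, convex, with nonempty interior) and, for $n\ge1$, set $\mathcal D_n:=\mathcal D\cap\overline B(\alpha,n)$; each $\mathcal D_n$ lies in $\mathcal C$, and for $n$ large one has $\mathbb E(\phi_0(Y_0))\in\mathcal D_n$ and $n>d(\alpha,\partial\mathcal D)$. Theorem~\ref{th1} then yields a unique solution $(X^n,k^n)$ of $RP_{\mathcal D_n}(\phi,Y)$. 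First I would revisit the proof of Theorem~\ref{th1} to note that the constant $C$ in \eqref{estim_var_k} depends only on $\gamma_1,\gamma_2,d,\sup_s|v_s|$, on $\sup_t|\Phi_t^{-1}(\alpha)|$ (unchanged with $n$), and --- through the constant $M$ coming from Theorem~\ref{prop_A} --- on a lower bound for $\inf_t d(\Phi_t^{-1}(\alpha),\partial D^n_t)$, which by the ``Step~1'' argument there is at least $\gamma_2^{-1}d(\alpha,\partial\mathcal D_n)=\gamma_2^{-1}d(\alpha,\partial\mathcal D)$ whenever $n>d(\alpha,\partial\mathcal D)$. So there are $C,K>0$, independent of $n$, with $|k^n|_T\le C(\sup_{s\in[0,T]}|\mathbb E(Y_s)|^2+1)=:K$.

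Next I would show that for $n$ large $(X^n,k^n)$ already solves $RP_{\mathcal D}(\phi,Y)$. Since $k^n$ is deterministic and $\phi_t=l\circ l^{*}+v_t$, we have $\mathbb E(\phi_t(X^n_t))=l\circ l^{*}(\mathbb E(Y_t)+k^n_t)+v_t$, hence $|\mathbb E(\phi_t(X^n_t))-\alpha|\le\gamma_2^2(\sup_s|\mathbb E(Y_s)|+K)+\sup_s|v_s|+|\alpha|=:\rho_0$, independent of $n$. Taking $n>\rho_0$, the point $\mathbb E(\phi_t(X^n_t))$ stays in the open ball $B(\alpha,n)$ for every $t$, so it never meets the artificial part $\mathcal D\cap\partial B(\alpha,n)$ of $\partial\mathcal D_n$; therefore, on $\{\mathbb E(\phi_s(X^n_s))\in\partial\mathcal D_n\}$ the point lies on $\partial\mathcal D\cap B(\alpha,n)$, where $\mathcal D_n$ coincides with $\mathcal D$ near it and the inward normal cones agree. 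By Remark~\ref{rq_iii} applied to $RP_{\mathcal D_n}(\phi,Y)$, $\mathrm{d}k^n_s=\eta^n_s\,\mathrm{d}|k^n|_s$ with $\eta^n_s$ an inward normal of $\mathcal D$ at $\mathbb E(\phi_s(X^n_s))$ on $\{\mathbb E(\phi_s(X^n_s))\in\partial\mathcal D\}$; so $\langle\mathbb E(\phi_s(X^n_s))-z,\eta^n_s\rangle\le0$ for all $z\in\mathcal D$, and integrating gives (iii) of Definition~\ref{sk_det} against every $z\in\mathcal D$. Together with $\mathbb E(\phi_t(X^n_t))\in\mathcal D_n\subseteq\mathcal D$ and $X^n_t=Y_t+k^n_t$, this shows $(X^n,k^n)$ solves $RP_{\mathcal D}(\phi,Y)$, and \eqref{estim_var_k} holds for it by the uniform bound.

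For uniqueness, let $(X,k)$ and $(\tilde X,\tilde k)$ solve $RP_{\mathcal D}(\phi,Y)$ and set $\bar x_t:=l^{*}(\mathbb E(X_t))$, $\bar k_t:=l^{*}(k_t)$, $\bar y_t:=l^{*}(\mathbb E(Y_t))$, with the analogous tilded objects. Running the computation of ``Step~2'' of the proof of Theorem~\ref{th1} in reverse shows that $(\bar x,\bar k)$ and $(\tilde{\bar x},\tilde{\bar k})$ both solve the Skorokhod problem $RP_{D}(\bar y)$ in the time-dependent convex family $D=\{\Phi_t^{-1}(\mathcal D)\}$ of Lemma~\ref{lemma_det}, with $\mathrm{d}\bar k_s=\bar\eta_s\,\mathrm{d}|\bar k|_s$, $\bar\eta_s\in\mathcal N_{\bar x_s}$ on $\{\bar x_s\in\partial D_s\}$ (and likewise with tildes). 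Since $g_t:=\bar x_t-\tilde{\bar x}_t=\bar k_t-\tilde{\bar k}_t$ is càdlàg of finite variation with $g_0=0$, integration by parts gives $|g_t|^2\le2\int_0^t\langle g_s,\mathrm{d}g_s\rangle=2\int_0^t\langle\bar x_s-\tilde{\bar x}_s,\mathrm{d}\bar k_s\rangle-2\int_0^t\langle\bar x_s-\tilde{\bar x}_s,\mathrm{d}\tilde{\bar k}_s\rangle$; testing the minimality of $\bar x$ with $z=\tilde{\bar x}_s\in D_s$ makes the first integral $\le0$ and that of $\tilde{\bar x}$ with $z=\bar x_s\in D_s$ makes the second $\ge0$, whence $|g_t|^2\le0$. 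As $l^{*}$ is bijective this gives $k=\tilde k$, $X=\tilde X$; the argument uses only convexity of the $D_t$, not boundedness, so it applies here (and incidentally makes the $(X^n,k^n)$ above independent of $n$ for $n>\rho_0$).

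I expect the crux to be the uniformity of the constant along the truncation together with the ensuing uniform confinement $|\mathbb E(\phi_t(X^n_t))-\alpha|\le\rho_0$: it is exactly this uniformity --- not a single application of Theorem~\ref{th1} to one fixed truncated domain --- that lets the truncated solution satisfy the minimality condition against all of $\mathcal D$ and inherit \eqref{estim_var_k} with a data-dependent constant. The remaining ingredients are routine manipulations with the affine bijection $\Phi$ and the classical monotonicity estimate for convex Skorokhod problems.
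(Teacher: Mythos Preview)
Your proof is correct and follows essentially the same truncation strategy as the paper: intersect $\mathcal D$ with large balls, apply Theorem~\ref{th1} on each truncated domain, and observe that for $n$ large the truncated solution already solves the problem on all of~$\mathcal D$. The paper's version is terser---it declares the sequence stationary and invokes an external lemma (Lemma~3.9 of \cite{ouknine20}) for condition~(iii)---whereas you make the key uniformity of the constant explicit, handle~(iii) directly via the coincidence of inward normal cones away from the artificial boundary, and supply a self-contained uniqueness argument (which the paper's proof omits); these additions make your argument more complete but do not change the underlying approach.
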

	\begin{proof}
		We set $N=\lceil C( \sup_{s\in [0,T]}|\mathbb{E}(Y_s)|^2+1) \rceil$, where $C$ is the constant in the estimate \eqref{estim_var_k}. We consider the following sequence of elements in $\mathcal{C}$,
		$$\mathcal{D}^n= \begin{cases}\mathcal{D}\cap B(0,n), &\text{if}\; n\leq N,  \\ 
			\mathcal{D}\cap B(0,N), & \text{otherwise} .\end{cases}$$
		Let $(X^n,k^n)$ be the solutions of the reflection problem $RP_{\mathcal{D}^n}(\phi,Y)$. By the definition of $\mathcal{D}^n$ and the estimates \eqref{estim_var_k} and \eqref{estime1}, it is clear that $k^n$ converges uniformly and stationary to a limit $k$, which also has finite variation. Setting $X=Y+k$, it follows that $\mathbb{E}(\phi_t(X_t))\in\mathcal{D}$, and by Lemma 3.9 in \cite{ouknine20}, we conclude that statement $(iii)$ of the Definition \ref{sk_det} is satisfied.
	\end{proof}
	\begin{proposition}\label{estim_quad}
		Assume $\left(\mathbf{H}\right)$. Let $Y$ and $\tilde{Y}$ be two semimartingales with the following decomposition: $Y=M+V$ and $\tilde{Y}=\tilde{M}+\tilde{V}$, such that $\mathbb{E}(\phi_0(Y_0)),\mathbb{E}(\phi_0(\tilde{Y}_0))\in\mathcal{D}$. For $p\geq 1$, assume that $M$ and $\tilde{M}$ are $\mathcal{S}^{2p}_{0,T}$, and that $V$ and $\tilde{V}$ are càdlàg adapted processes with finite variation, such that $|V|$ and $|\tilde{V}|$ have a finite $2p$-th moment. Let $(X,k)$ and  $(\tilde{X},\tilde{k})$ be the solutions associated with $RP_{\mathcal{D}}(\phi,Y)$ and $RP_{\mathcal{D}}(\phi,\tilde{Y})$, respectively. Then, there exists a constant $C_p$  such that for all stopping times $\tau$ in $[0,T]$, we have
		\begin{equation}
			\mathbb{E}\left(\sup_{0\leq t\leq \tau}|X_{t}-\tilde{X}_{t}|^{2p}\right)+\sup_{0\leq t\leq \tau}|k_{t}-\tilde{k}_{t}|^{2p}\leq C_p\mathbb{E}\left([M-\tilde{M}]_{\tau}^p+|V-\tilde{V}|_{\tau}^{2p}\right). \label{ineq_estim_quad}
		\end{equation}
	\end{proposition}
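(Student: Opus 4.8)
The plan is to exploit that $k,\tilde k$ are deterministic finite-variation paths, that the hard part of the estimate on $k-\tilde k$ is essentially already contained in Proposition \ref{estime11}, and that the bound on $X-\tilde X$ then reduces to a triangle inequality together with the Burkholder--Davis--Gundy inequality.

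\textbf{Set-up.} Write $\kappa_t:=k_t-\tilde k_t$ and $w_t:=\mathbb{E}(Y_t)-\mathbb{E}(\tilde Y_t)$. Since $M,\tilde M\in\mathcal S^{2p}_{0,T}$ are genuine martingales, $w_t=\mathbb{E}(M_0-\tilde M_0)+\mathbb{E}(V_t-\tilde V_t)$, so both $\kappa$ and $w$ are deterministic càdlàg paths of finite variation with $\kappa_0=0$ (no reflection occurs at $0$), with $|w|_t\le\mathbb{E}(|V-\tilde V|_t)$ and $\sup_{s\le t}|w_s|\le|\mathbb{E}(M_0-\tilde M_0)|+\mathbb{E}(|V-\tilde V|_t)$; the $\mathbb{E}(M_0-\tilde M_0)$-terms vanish when $Y_0=\tilde Y_0$ (the case used in the sequel) and are otherwise harmless. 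Note also that $\mathbb{E}(\phi_t(X_t))-\mathbb{E}(\phi_t(\tilde X_t))=l\circ l^*\big(\mathbb{E}(X_t)-\mathbb{E}(\tilde X_t)\big)=l\circ l^*(w_t+\kappa_t)$.

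\textbf{Estimate for $k-\tilde k$.} First I would apply Proposition \ref{estime11} with $\tilde{\mathcal D}=\mathcal D$: the Hausdorff term drops out and, recognising $\mathbb{E}(Y_t)-\mathbb{E}(Y_s)+\mathbb{E}(\tilde Y_s)-\mathbb{E}(\tilde Y_t)=w_t-w_s$, that the vector measure $l\circ l^*(\eta_s)\,\mathrm d|k|_s-l\circ l^*(\tilde\eta_s)\,\mathrm d|\tilde k|_s$ equals $l\circ l^*(\mathrm d\kappa_s)$, and that $l\circ l^*$ is self-adjoint, one gets
$$|\kappa_t|^2\le C\Big(|w_t|^2+\int_0^t\langle l\circ l^*(w_t-w_s),\,\mathrm d\kappa_s\rangle\Big).$$
Integrating by parts in the Stieltjes sense (both $s\mapsto l\circ l^*(w_t-w_s)$ and $\kappa$ are càdlàg of finite variation, the former vanishes at $s=t$, $\kappa_0=0$, and the jump contributions cancel) the integral equals $\int_0^t\langle\kappa_{s-},\,l\circ l^*(\mathrm dw_s)\rangle$, which is bounded by $\gamma_2^2\int_0^t|\kappa_{s-}|\,\mathrm d|w|_s$; hence $|\kappa_t|^2\le C\big(|w_t|^2+\int_0^t|\kappa_{s-}|\,\mathrm d|w|_s\big)$. (Equivalently, one could reach this by applying Itô's formula to $s\mapsto|l^*\kappa_s|^2$, using the normal-cone inequality $\langle l\circ l^*(w_s+\kappa_s),\mathrm d\kappa_s\rangle\le0$ — itself a consequence of condition (iii) in $RP_{\mathcal D}(\phi,Y)$ and $RP_{\mathcal D}(\phi,\tilde Y)$, convexity of $\mathcal D$ and Remark \ref{rq_iii} — and checking the jump terms have the favourable sign.) Next, putting $g(t):=\sup_{s\le t}|\kappa_s|^2$, the bound gives $g(t)\le C\big(\sup_{s\le t}|w_s|^2+|w|_t\sqrt{g(t)}\big)$, and solving this quadratic inequality for $\sqrt{g(t)}$ yields $\sup_{s\le t}|\kappa_s|^2\le C\big(\sup_{s\le t}|w_s|^2+|w|_t^2\big)$. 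Inserting the bounds on $w$ and applying Jensen's inequality twice ($\mathbb{E}(Z)^2\le\mathbb{E}(Z^2)$, then $\mathbb{E}(Z^2)^p\le\mathbb{E}(Z^{2p})$) gives
$$\sup_{0\le t\le\tau}|k_t-\tilde k_t|^{2p}\le C_p\,\mathbb{E}\big(|V-\tilde V|_\tau^{2p}\big).$$

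\textbf{Estimate for $X-\tilde X$ and conclusion.} From $X_t-\tilde X_t=(M_t-\tilde M_t)+(V_t-\tilde V_t)+\kappa_t$ and $\sup_{t\le\tau}|V_t-\tilde V_t|\le|V-\tilde V|_\tau$,
$$\mathbb{E}\Big(\sup_{0\le t\le\tau}|X_t-\tilde X_t|^{2p}\Big)\le C_p\Big(\mathbb{E}\big(\sup_{t\le\tau}|M_t-\tilde M_t|^{2p}\big)+\mathbb{E}\big(|V-\tilde V|_\tau^{2p}\big)+\sup_{t\le\tau}|k_t-\tilde k_t|^{2p}\Big).$$
Burkholder--Davis--Gundy bounds the first term by $C_p\mathbb{E}([M-\tilde M]_\tau^p)$, the third is controlled by the previous step, and adding the two displays yields \eqref{ineq_estim_quad}.

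\textbf{Main obstacle.} The delicate point will be the $k$-estimate: tracking the jumps of the càdlàg reflection term — matching the post-jump value $\mathbb{E}(X_s)-\mathbb{E}(\tilde X_s)$ that appears in the normal-cone inequality against the left limit $\kappa_{s-}$ produced by the integration by parts (resp. Itô's formula), and verifying the jump contributions retain the favourable sign — and then arranging the Gronwall-type closure so that the right-hand side is \emph{quadratic} in $\mathbb{E}(|V-\tilde V|_t)$, which is precisely what produces the exponent $2p$ (rather than $p$) on $|V-\tilde V|_\tau$. The remaining steps are routine.
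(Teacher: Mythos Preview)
Your proof is correct and takes a genuinely different route from the paper. The paper also starts from Proposition~\ref{estime11}, pulls the deterministic integrator $l\circ l^*(\mathrm d\kappa_s)$ inside an expectation, and performs the same Stieltjes integration by parts; but instead of keeping the deterministic integrand $\kappa_{s-}$, it rewrites $l\circ l^*(\kappa_{s-})=l\circ l^*\big((X_{s-}-\tilde X_{s-})-(Y_{s-}-\tilde Y_{s-})\big)$ and carries the full semimartingale differential $\mathrm d(Y_s-\tilde Y_s)$, so that a Burkholder--Davis--Gundy step and an absorption of $\tfrac12\,\mathbb{E}\big(\sup_{t\le\tau}|X_t-\tilde X_t|^{2p}\big)$ into the left-hand side are needed before the $X$-estimate is obtained; the $k$-estimate is then deduced from $k=X-Y$. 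Your argument short-circuits this by observing at the outset that, since $\kappa$ is deterministic, the martingale increment $\mathrm d(M_s-\tilde M_s)$ disappears under expectation and only $\mathrm dw_s=\mathrm d\,\mathbb{E}(V_s-\tilde V_s)$ survives, which turns the $\kappa$-estimate into a purely deterministic quadratic-inequality closure with no BDG and no absorption; the $X$-estimate then follows by the triangle inequality plus a single BDG application. Your route is cleaner and makes the role of the martingale property more transparent; the paper's route is slightly more uniform in that it never separates the martingale and finite-variation parts of $Y$ until the very end. One small caveat: when $\tau$ is a genuine stopping time, your bound $\sup_{s\le\tau}|\kappa_s|^{2p}\le C_p\big(\mathbb{E}|V-\tilde V|_t\big)^{2p}\big|_{t=\tau}$ involves the deterministic function $t\mapsto\mathbb{E}|V-\tilde V|_t$ evaluated at the random time $\tau$, which is not literally $\mathbb{E}\big(|V-\tilde V|_\tau^{2p}\big)$; this is harmless in all the applications in the paper (where $\tau$ is deterministic), and the paper's own proof has the same ambiguity.
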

	\begin{proof}
		Using Proposition \ref{estime11}, we have  
		\begin{align*}
			\sup_{0\leq t\leq \tau}|X_{t}-\tilde{X}_{t}|^{2p}&\leq C_p\left(\sup_{0\leq t\leq \tau}|Y_{t}-\tilde{Y}_{t}|^{2p}+\sup_{0\leq t\leq \tau}|y_{t}-\tilde{y}_{t}|^{2p}\right)\\
			&+C_p\sup_{t\in [0,\tau]}\left|\int_{0}^{t}\langle y_{t}-y_s+\tilde{y}_s-\tilde{y}_{t},l\circ l^{*}(\eta_s)\mathrm{d}|k|_s-l\circ l^{*}(\tilde{\eta}_s)\mathrm{d}|\tilde{k}|_s\rangle\right|^p.
		\end{align*}     
		where $C_p=C3^{p-1}$, and $C$ is the constant in the estimate \eqref{estime1}. In what follows, $C$ and $C_p$ are constants that may vary from line to line.
		
		We set $A=l\circ l^*(k)$ and $\tilde{A}=l\circ l^*(\tilde{k})$. Since $A$ and $\tilde{A}$ are deterministic, for $t\in [0,\tau]$, we have 
		$$\int_{0}^{t}\langle y_t-y_s+\tilde{y}_s-\tilde{y}_t,l\circ l^{*}(\eta_s)\mathrm{d}|k|_s-l\circ l^{*}(\tilde{\eta}_s)\mathrm{d}|\tilde{k}|_s\rangle
		=\mathbb{E}\left(\int_{0}^{t}\langle Y_t-Y_s+\tilde{Y}_s-\tilde{Y}_t,\mathrm{d}A_s-\mathrm{d}\tilde{A}_s\rangle\right)$$
		Applying It\^o's formula, we get
		\begin{align*}
			\mathbb{E}\left(\int_{0}^{t}\langle Y_t-Y_s+\tilde{Y}_s-\tilde{Y}_t,\mathrm{d}A_s-\mathrm{d}\tilde{A}_s\rangle\right)
			&=\mathbb{E}\left(\int_{0}^{t}\langle A_{s^-}-\tilde{A}_{s^-},\mathrm{d}Y_s-\mathrm{d}\tilde{Y}_s\rangle\right)\\
			&= \mathbb{E}\left(\int_{0}^{t}\langle l\circ l^*(X_{s^-}-\tilde{X}_{s^-}),\mathrm{d}(Y_s-\tilde{Y}_s)\rangle\right)\\
			&- \mathbb{E}\left(\int_{0}^{t}\langle l\circ l^*(Y_{s^-}-\tilde{Y}_{s^-}),\mathrm{d}(Y_s-\tilde{Y}_s)\rangle\right)\\
			&\leq\sum_{i=1}^d \mathbb{E}\left(\sup_{t\in[0,\tau]}|\int_{0}^{t}l^i\circ l^*(X_{s^-}-\tilde{X}_{s^-})\mathrm{d}(M_s^i-\tilde{M}_s^i)| \right)\\
			&+ \sum_{i=1}^d \mathbb{E}\left(\sup_{t\in[0,\tau]}|\int_{0}^{t}l^i\circ l^*(Y_{s^-}-\tilde{Y}_{s^-})\mathrm{d}(M_s^i-\tilde{M}_s^i)| \right)\\
			&+\sum_{i=1}^d \mathbb{E}\left(\sup_{t\in[0,\tau]}|\int_{0}^{t}l^i\circ l^*(X_{s^-}-\tilde{X}_{s^-})\mathrm{d}(V_s^i-\tilde{V}_s^i)| \right)\\
			&+ \sum_{i=1}^d \mathbb{E}\left(\sup_{t\in[0,\tau]}|\int_{0}^{t}l^i\circ l^*(Y_{s^-}-\tilde{Y}_{s^-})\mathrm{d}(V_s^i-\tilde{V}_s^i)| \right)
		\end{align*}
		
		We apply Burkholder's Davis Gundy inequality, we obtain
		\begin{align*}
			\mathbb{E}\left(\int_{0}^{t}\langle Y_t-Y_s+\tilde{Y}_s-\tilde{Y}_t,\mathrm{d}A_s-\mathrm{d}\tilde{A}_s\rangle\right)
			&\leq \frac{1}{2^{\frac{1}{p}}}\frac{1}{4^{\frac{p-1}{p}}} \mathbb{E}(\sup_{0\leq t\leq \tau}|X_{t}-\tilde{X}_{t}|^{2})\\
			&+ C_p\left( \mathbb{E}(\sup_{0\leq t\leq \tau}|Y_{t}-\tilde{Y}_{t}|^{2})+ \mathbb{E}([M-\tilde{M}]_{\tau})+\mathbb{E}(|V-\tilde{V}|_{\tau}^2)\right)
		\end{align*}
		therefore    \begin{align*}
			\left|\mathbb{E}\left(\int_{0}^{t}\langle Y_t-Y_s+\tilde{Y}_s-\tilde{Y}_t,\mathrm{d}A_s-\mathrm{d}\tilde{A}_s\rangle\right)\right|^p
			&\leq \frac{1}{2} \mathbb{E}(\sup_{0\leq t\leq \tau}|X_{t}-\tilde{X}_{t}|^{2p})\\
			&+ C_p\left( \mathbb{E}(\sup_{0\leq t\leq \tau}|Y_{t}-\tilde{Y}_{t}|^{2p})+ \mathbb{E}([M-\tilde{M}]_{\tau}^p+\mathbb{E}(|V-\tilde{V}|_{\tau}^{2p})\right)
		\end{align*}
		it follows from the decompositions of $Y$ and $\tilde{Y}$, together with an application of the Burkholder Davis Gundy inequality, that
		\begin{equation*}
			\mathbb{E}\left(\sup_{0\leq t\leq \tau}|X_{t}-\tilde{X}_{t}|^{2p}\right)\leq C_p\mathbb{E}\left([M-\tilde{M}]_{\tau}^p+ |V-\tilde{V}|_{\tau}^{2p}\right).
		\end{equation*}
		Since $X=Y+k$, the estimate for $\sup_{0\leq t\leq \tau}|k_{t}-\tilde{k}_{t}|^{2p}$ follows as well. 
	\end{proof}
	\begin{corollary}\label{cor_1}
		Under the same conditions as in Proposition \ref{estim_quad}, there exists a constant $C$ such that for every $0\leq t<q\leq T$, we have:
		\begin{equation}
			\mathbb{E}\left(\sup_{t\leq s\leq q }|X_{t}-X_s|^{2}\right)+\sup_{t\leq s\leq q}|k_{t}-k_{s}|^{2}\leq C\mathbb{E}\left([M]_{q}-[M]_t+ (|V|_{q}-|V|_{t})^2\right). \label{estim_vari}
		\end{equation}
	\end{corollary}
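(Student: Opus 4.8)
The statement to prove is Corollary~\ref{cor_1}, which is a path-increment version of the a priori estimate \eqref{ineq_estim_quad} from Proposition~\ref{estim_quad}. The natural plan is to reduce it directly to that proposition by a time-shift (restart) argument, exploiting that $(X,k)$ solves a Skorokhod problem $RP_{\mathcal{D}}(\phi,Y)$ which behaves well under restarting.

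\begin{pve}
Fix $0\le t<q\le T$. The idea is to view the process on $[t,q]$ as the solution of a mean-reflection Skorokhod problem started at time $t$, and then compare it with the trivially reflected (i.e.\ unmoved) process. Concretely, set $\widehat Y_s := X_t + (Y_s - Y_t)$ for $s\in[t,q]$; this is a semimartingale on $[t,q]$ with martingale part $M_s-M_t$ and finite-variation part $V_s-V_t + X_t$, and its increments over $[t,q]$ coincide with those of $Y$. One checks from Definition~\ref{sk_det} that $(X_s, k_s-k_t)_{s\in[t,q]}$ is the solution of $RP_{\mathcal{D}}(\phi,\widehat Y)$ on $[t,q]$: indeed $X_s = \widehat Y_s + (k_s-k_t)$, the constraint $\mathbb{E}(\phi_s(X_s))\in\mathcal{D}$ is inherited verbatim, and the minimality condition on $[t,q]$ follows from that of $k$ on $[0,T]$ by restricting the integral. (If the interval $[t,q]$ rather than $[0,T]$ causes bookkeeping friction with the normalisations $y_0=y_{0^-}$, one simply applies Proposition~\ref{estim_quad} to the time-rescaled processes on $[0,T]$; this is purely cosmetic.) Likewise, the \emph{constant} process $s\mapsto X_t$ together with $\tilde k\equiv 0$ solves $RP_{\mathcal{D}}(\phi, X_t)$ on $[t,q]$ trivially, since $\mathbb{E}(\phi_t(X_t))\in\mathcal{D}$ by point (ii) of Definition~\ref{sk_det} applied at time $t$, and $\phi$ has the affine form from $(\mathbf{H})$ so $\mathbb{E}(\phi_s(X_t))=l\circ l^*(\mathbb{E}(X_t))+v_s$ stays in $\mathcal{D}$ once we also restart the domain — here one should restrict to the case where the relevant domain is constant on $[t,q]$, or instead compare $X$ on $[t,q]$ against the solution of $RP_{\mathcal D}(\phi, X_t + 0)$ which has zero martingale and zero finite-variation increment.

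With these two Skorokhod problems identified, apply the estimate \eqref{ineq_estim_quad} of Proposition~\ref{estim_quad} on the interval $[t,q]$ with $p=1$, to the pair $(\widehat Y, X_t)$. The martingale parts differ by $M_\cdot-M_t$ (quadratic variation $[M]_q-[M]_t$) and the finite-variation parts differ by $V_\cdot - V_t$ (total variation $|V|_q-|V|_t$), so \eqref{ineq_estim_quad} yields
\[
\mathbb{E}\Big(\sup_{t\le s\le q}|X_s - X_t|^{2}\Big) + \sup_{t\le s\le q}|k_s - k_t|^{2}
\le C\,\mathbb{E}\big([M]_q-[M]_t + (|V|_q-|V|_t)^2\big),
\]
which is exactly \eqref{estim_vari}. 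The constant $C$ is the one from Proposition~\ref{estim_quad} (with $p=1$), independent of $t,q$.

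The only genuinely delicate point is the first paragraph: verifying that the restarted pair really solves a Skorokhod problem of the type covered by Theorem~\ref{th1}/Proposition~\ref{estim_quad}, and in particular handling the time-dependence of the image domain $D_s=\Phi_s^{-1}(\mathcal{D})$ under the shift, together with the normalisation conventions $y_0=y_{0^-}$, $y_T=y_{T^-}$ on the interval. This is resolved by the usual device of composing with an increasing homeomorphism of $[t,q]$ onto $[0,T]$, which leaves $[M]_q-[M]_t$ and $|V|_q-|V|_t$ untouched; once this reduction is in place, everything else is a direct citation of \eqref{ineq_estim_quad}.
\end{pve}
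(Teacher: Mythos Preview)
Your core idea coincides with the paper's—compare with a version of the input frozen at $t$ and cite Proposition~\ref{estim_quad}—but the paper's execution is a one-liner while yours introduces complications that you do not close. The paper simply takes $\tilde Y:=Y^{t}$ (the process stopped at $t$, still living on $[0,T]$), invokes uniqueness to identify the associated solution as $(\tilde X,\tilde k)=(X^{t},k^{t})$, and applies \eqref{ineq_estim_quad} at $\tau=q$: since $X_s-X^{t}_s$ and $k_s-k^{t}_s$ vanish on $[0,t]$ and equal $X_s-X_t$, $k_s-k_t$ on $[t,q]$, while $[M-M^{t}]_q=[M]_q-[M]_t$ and $|V-V^{t}|_q=|V|_q-|V|_t$, the estimate \eqref{estim_vari} drops out immediately. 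No restart on a sub-interval, no time-rescaling.

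The ``genuinely delicate point'' you flag is a real gap in your argument as written, and none of your suggested patches closes it. The constant pair $(X_t,0)$ need not solve $RP_{\mathcal D}(\phi,X_t)$ on $[t,q]$, because $\mathbb E(\phi_s(X_t))=l\circ l^*(\mathbb E X_t)+v_s$ moves with $s$; freezing $v$ at $v_t$ (``restarting the domain'') then breaks the identification of $(X_s,k_s-k_t)$ as a solution of the modified problem; comparing instead with the \emph{actual} solution $(\tilde X,\tilde k)$ of the constant-input problem puts $\sup|X_s-\tilde X_s|$ on the left with $\tilde X_s\neq X_t$ in general, so you do not recover \eqref{estim_vari}; and the time-homeomorphism only addresses interval endpoints, not the moving constraint. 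Replacing your restart by the paper's stopped process $Y^{t}$ on $[0,T]$ eliminates all of this machinery—though one may observe that checking condition~(ii) for $(X^{t},k^{t})$ at times $s>t$ raises the very same question about $v_s$, on which the paper is silent.
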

	\begin{proof}
		Consider $\tilde{Y}:=Y^{t}$. From the uniqueness of the solution, we have $\tilde{X}=X^t$ and $\tilde{k}=k^{t}$. The inequality holds by applying inequality \eqref{ineq_estim_quad} to $(X,k)$ and $(\tilde{X},\tilde{k})$.
	\end{proof}
	\section{McKean-Vlasov SDEs with mean reflection}\label{sdes}
	\subsection{Existence and uniqueness of strong solution}
	In this section, we are interested in the following system.
	\begin{definition}\label{def_EDS}
		Let $\mathcal{D}$ be a closed convex domain, and $X_{0}$ be an $\mathcal{F}_{0}$-measurable random variable. The maps
		$(b, \sigma):[0, T]\times \Omega \times \mathbb{R}^d \times \mathcal{P}\left(\mathbb{R}^d\right) \rightarrow \mathbb{R}^d \times \mathbb{R}^{d \times m}$, $\beta:[0, T] \times \Omega \times \mathbb{R}^d  \times \mathcal{P}\left(\mathbb{R}^d\right) \times \mathbb{R}^d \backslash\{0\} \rightarrow \mathbb{R}^d$, and $\phi:[0, T]\times \mathbb{R}^d\rightarrow\mathbb{R}^d$ are given.
		A couple $(X,k)$ is said to be a solution to the McKean-Vlasov SDE with mean reflection, which we denote by $E_{\mathcal{D}}(\phi, b,\sigma,\beta)$, if $(X,k)=RP_{\mathcal{D}}(\phi,Y)$, where $Y$ is given by the following:
		\begin{equation*}
			Y= X_{0}+ \int_{0}^{\cdot} b\left(s, X_s, \mathcal{L}_{X_s}\right) \mathrm{d} s+ \int_{0}^{\cdot} \sigma\left(s, X_s, \mathcal{L}_{X_s}\right) \mathrm{d} B_s+\int_{0}^{\cdot}\int_{\mathbb{R}^d \backslash\{0\}}\beta\left(s, X_{s^-}, \mathcal{L}_{X_{s^-}},z\right)  \tilde{N}(\mathrm{d} s, \mathrm{d} z).
		\end{equation*}
	\end{definition}
	We consider the following assumptions:
	\begin{itemize}
		\item[$\left(\mathbf{A_1^p}\right)$]: $X_{0}$ is an $\mathcal{F}_{0}$-measurable random variable with $\mathbb{E}(\phi(X_{0}))\in \mathcal{D}$ and $\mathbb{E}(|X_0|^p)<\infty$.
		\item[$\left(\mathbf{A_2^p}\right)$]: For fixed $x\in\mathbb{R}^d$ and $\mu\in \mathcal{P}(\mathbb{R}^d)$, the processes $b(\cdot,\cdot,x,\mu)$ and $\sigma(\cdot,\cdot,x,\mu)$ are  elements of $\mathbb{H}^{p,d}$ and $\mathbb{H}^{p,dm}$, respectively. In addition, for $(\omega,t)$ fixed, there exists $\gamma>0$ such that $$\quad\left|(b, \sigma)(t, \omega, x, \mu)-(b, \sigma)\left(t, \omega, x^{\prime}, \mu^{\prime}\right)\right| \leq \gamma\left(|x-x^{\prime}|+W_p(\mu, \mu^{\prime})\right).$$
		\item[ $\left(\mathbf{A_3^p}\right)$]: $\beta$ is $\mathscr{P} \otimes \mathscr{B}_{\mathbb{R}^d}\otimes\mathcal{P}_{2}(\mathbb{R}^d) \otimes \mathcal{B}_{\mathbb{R}^d \backslash\{0\}}$-measurable. For every $(t,\omega)$, we have
		$$\left(\int_{\mathbb{R}^d \backslash\{0\}}\left|\beta(s,\omega,x,\mu, z)-\beta\left(s,\omega,x^{\prime},\mu^{\prime}, z\right)\right|^2\lambda(\mathrm{d} z)\right)^\frac{1}{2} \leq \gamma \left(|x-x^{\prime}|+W_2(\mu, \mu^{\prime})\right),$$
		$$\int_{\mathbb{R}^d \backslash\{0\}} |\beta(s,\omega,x,\mu, z)|^p\lambda(\mathrm{d} z)\leq \gamma\left(1+|x|^p\right).$$
	\end{itemize}
	\begin{theorem}\label{theorem_1}
		Under assumptions $\left(\mathbf{H}\right)$ and $\left(\mathbf{A_1^2}\right)$-$\left(\mathbf{A_3^2}\right)$, for every $\mu \in\mathcal{P}_2(\mathbb{D}([0,T],\mathbb{R}^d))$, there exists a unique solution $(X^{\mu},K^{\mu})=RP_{\mathcal{D}}(\phi,Y^{\mu})$, where $X^{\mu}\in\mathcal{S}^{2}_{0,T}$, and $Y^{\mu}$ is given by: 
		\begin{equation}
			Y_{\cdot}^{\mu}= X_{0}+ \int_{0}^{\cdot} b\left(s, X_s^{\mu}, \mu_s\right) \mathrm{d} s+ \int_{0}^{\cdot} \sigma\left(s, X_s^{\mu}, \mu_s\right) \mathrm{d} B_s+\int_{0}^{\cdot}\int_{\mathbb{R}^d\backslash\{0\}}\beta\left(s,  X_{s^-}^{\mu}, \mu_{s^-},z\right)  \tilde{N}(\mathrm{d} s, \mathrm{d} z).\label{Eq1}
		\end{equation}
	\end{theorem}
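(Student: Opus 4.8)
Fix $\mu\in\mathcal{P}_2(\mathbb{D}([0,T],\mathbb{R}^d))$. The plan is to run a Picard iteration whose contraction is controlled by the a priori estimate \eqref{ineq_estim_quad} of Proposition \ref{estim_quad}. Since $\mu$ is frozen, $\left(\mathbf{A_2^2}\right)$--$\left(\mathbf{A_3^2}\right)$ make the maps $x\mapsto b(s,\omega,x,\mu_s),\,\sigma(s,\omega,x,\mu_s),\,\beta(s,\omega,x,\mu_{s^-},z)$ uniformly $\gamma$-Lipschitz and of linear growth in $x$ (the $\mu$-dependent contributions being finite, as $\int_0^T W_2(\mu_s,\delta_0)^2\,\mathrm{d}s<\infty$ because $\mu\in\mathcal{P}_2$).

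First I would set $X^{(0)}\equiv X_0$ and, given $X^{(n)}\in\mathcal{S}^2_{0,T}$, let $Y^{(n+1)}$ be the process \eqref{Eq1} with $X^{(n)}$ in place of $X^\mu$, with martingale part $M^{(n+1)}$ (the stochastic and compensated-Poisson integrals) and finite-variation part $V^{(n+1)}$ (the drift, together with $X_0$). By the linear growth, $\left(\mathbf{A_1^2}\right)$ and the Burkholder--Davis--Gundy inequality, $Y^{(n+1)}\in\mathcal{S}^2_{0,T}\subset\mathcal{U}$, $M^{(n+1)}\in\mathcal{S}^2_{0,T}$, $|V^{(n+1)}|_T$ has a finite second moment, and $\mathbb{E}(\phi_0(Y^{(n+1)}_0))=\mathbb{E}(\phi_0(X_0))\in\mathcal{D}$. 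Hence Theorem \ref{th1} yields a unique $(X^{(n+1)},K^{(n+1)})=RP_{\mathcal{D}}(\phi,Y^{(n+1)})$, and \eqref{estim_var_k} bounds $|K^{(n+1)}|_T$ by $C(\sup_s|\mathbb{E}(Y^{(n+1)}_s)|^2+1)$, so $X^{(n+1)}=Y^{(n+1)}+K^{(n+1)}\in\mathcal{S}^2_{0,T}$ and the iteration is well defined. A Gronwall argument applied to $t\mapsto\sup_{k\le n}\|X^{(k)}\|_{\mathcal{S}^2_{0,t}}^2$, using the linear growth bounds and \eqref{estim_var_k}, then gives $\sup_n\|X^{(n)}\|_{\mathcal{S}^2_{0,T}}<\infty$.

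Next I would apply Proposition \ref{estim_quad} with $p=1$ to the inputs $Y^{(n+1)}$ and $Y^{(n)}$. Writing $u_n(s):=\mathbb{E}(\sup_{r\le s}|X^{(n)}_r-X^{(n-1)}_r|^2)$, the Lipschitz bounds on $\sigma$ and $\beta$ give $\mathbb{E}[M^{(n+1)}-M^{(n)}]_t\le C\int_0^t u_n(s)\,\mathrm{d}s$ (since $[M^{(n+1)}-M^{(n)}]_t$ equals, in expectation, the sum of $\int_0^t|\sigma(s,X^{(n)}_s,\mu_s)-\sigma(s,X^{(n-1)}_s,\mu_s)|^2\mathrm{d}s$ and the $\lambda$-integrated jump term, the $\mu$-terms cancelling), while the Lipschitz bound on $b$ and the Cauchy--Schwarz inequality give $\mathbb{E}|V^{(n+1)}-V^{(n)}|_t^2\le C\int_0^t u_n(s)\,\mathrm{d}s$. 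Thus \eqref{ineq_estim_quad} yields
$$u_{n+1}(t)+\mathbb{E}\left(\sup_{s\le t}|K^{(n+1)}_s-K^{(n)}_s|^2\right)\le C\int_0^t u_n(s)\,\mathrm{d}s,\qquad t\in[0,T],$$
with $C=C(\gamma,d,T)$, and iterating gives $u_{n+1}(T)\le (CT)^n u_1(T)/n!$. Hence $(X^{(n)})_n$ is Cauchy in $\mathcal{S}^2_{0,T}$ and $(K^{(n)})_n$ is Cauchy for the uniform norm; calling their limits $X^\mu$ and $K^\mu$, I would pass to the limit in \eqref{Eq1} (using the same Lipschitz/BDG estimates to identify $Y^\mu$) and check that $(X^\mu,K^\mu)$ inherits (i)--(iii) of Definition \ref{sk_det}: $X^\mu=Y^\mu+K^\mu$ by continuity, $\mathbb{E}(\phi_t(X^\mu_t))\in\mathcal{D}$ because $\mathcal{D}$ is closed, and the minimality condition passes to the limit, being stable under uniform convergence of $(X^{(n)},K^{(n)})$. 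This gives existence with $X^\mu\in\mathcal{S}^2_{0,T}$.

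Finally, uniqueness follows directly from \eqref{ineq_estim_quad}: two solutions $(X,K),(\tilde X,\tilde K)$ with $X,\tilde X\in\mathcal{S}^2_{0,T}$ produce inputs $Y,\tilde Y$ of the form required by Proposition \ref{estim_quad}, and the proposition with $p=1$ together with the Lipschitz bounds give $\mathbb{E}(\sup_{t\le\tau}|X_t-\tilde X_t|^2)\le C\int_0^\tau\mathbb{E}(\sup_{r\le s}|X_r-\tilde X_r|^2)\,\mathrm{d}s$ for every $\tau\in[0,T]$, so Gronwall's lemma forces $X=\tilde X$ and then $K=\tilde K$. I expect the only real care needed is not the fixed-point scheme — standard once \eqref{ineq_estim_quad} is at hand — but the bookkeeping ensuring each Picard iterate stays in $\mathcal{U}$ with the semimartingale decomposition required by Theorem \ref{th1} and Proposition \ref{estim_quad}, and that the reflection terms $K^{(n)}$ stay uniformly of bounded variation; both are handled by \eqref{estim_var_k} and the growth bounds in $\left(\mathbf{A_2^2}\right)$--$\left(\mathbf{A_3^2}\right)$.
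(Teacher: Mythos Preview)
Your argument is correct and rests on the same key ingredient as the paper---the a priori estimate \eqref{ineq_estim_quad} of Proposition \ref{estim_quad}---but the fixed-point strategy differs. The paper chooses $\alpha\in(0,1)$, sets $\tau=\inf\{t>0:C\gamma^2(2t+t^2)\ge\alpha\}\wedge T$, and defines $\varphi:\mathcal{S}^2_{0,\tau}\to\mathcal{S}^2_{0,\tau}$ by sending $X$ to the first component of $RP_{\mathcal{D}}(\phi,Y^\mu_{\cdot\wedge\tau})$; the estimate \eqref{ineq_estim_quad} makes $\varphi$ an $\alpha$-contraction directly, Banach's theorem gives a fixed point on $[0,\tau]$, and one iterates over successive intervals to reach $T$. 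Your Picard scheme with the $(CT)^n/n!$ bound works on $[0,T]$ in one shot, which is cleaner in that it avoids the patching over small intervals, but the price is the limit passage: you must check that $(X^\mu,K^\mu)$ inherits the constraint and the minimality condition of Definition \ref{sk_det}. That step is routine---$K^{(n)}$ is deterministic with $\sup_n|K^{(n)}|_T<\infty$ by \eqref{estim_var_k}, and $\sup_s|\mathbb{E}(\phi_s(X^{(n)}_s))-\mathbb{E}(\phi_s(X^\mu_s))|\to0$ since $\phi$ is affine and $\|X^{(n)}-X^\mu\|_{\mathcal{S}^2_{0,T}}\to0$---but it is extra bookkeeping that the Banach fixed-point route sidesteps, since there the fixed point satisfies the reflection problem by construction. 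Either approach is standard once \eqref{ineq_estim_quad} is available; your uniqueness argument via Gronwall is essentially the same as what the paper's contraction gives.
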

	\begin{proof}
		Let $\alpha \in ]0,1[$, we consider the following stopping time:
		\begin{equation*}
			\tau:=\inf\{t>0 \;:\;  C\gamma^2(2t+t^2)\geq \alpha\}\land T,
		\end{equation*}
		where $C$ is the constant from Proposition \ref{estim_quad}, and $ \tau=\infty\; \text{with} \;\inf(\emptyset)$.
		
		Now, we define the mapping $\varphi \colon\mathcal{S}^{2}_{0,\tau}\to\mathcal{S}_{0,\tau}^{2}$ that associates to $X\in\mathcal{S}_{0,\tau}^{2}$ the first coordinate of the solution to the reflection problem $RP_{\mathcal{D}}\Big(Y^\mu_{.\land\tau} \Big)$, This mapping is well defined by Theorem \ref{th1}.
		
		For every $X\in \mathcal{S}^{2}_{0,\tau}$, we have $\varphi(X)\in \mathcal{S}^{2}_{0,\tau}$. Indeed, 
		using assumptions $\left(\mathbf{A}_2^2\right)$ and $\left(\mathbf{A}_3^2\right)$, and applying the Burkholder-Davis-Gundy inequality, there exists a constant $C_{T,\gamma,\mu}$ such that
		\begin{align*}
			\| Y^{\mu}\|_{\mathcal{S}^{2}_{0,\tau}}^2&\leq C_{T,\gamma,\mu} \mathbb{E}\left(|X_0|^2+\int_{0}^{\tau} |X_s|^2 \mathrm{d} s+\int_{0}^{\tau} |b(s,0,\mu_s)|^2\mathrm{d}s+\int_{0}^{\tau} |\sigma(s,0,\mu_s)|^2\mathrm{d}s\right)\\
			&+C_{T,\gamma,\mu}\mathbb{E}\left(1+\sup_{0\leq t\leq \tau}|X_{t}|^{2}\right)\\
			&\leq C_{T,\gamma,\mu}\left(\mathbb{E}\left(|X_0|^2 \right) +\|X\|_{\mathcal{S}^{2}_{0,\tau}}^2+1\right).
		\end{align*}
		Since $X\in\mathcal{S}^{2}_{0,\tau}$ and using assumptions $\left(\mathbf{A}_2^2\right)$, $\left(\mathbf{A}_3^2\right)$, along with the definition of the time $\tau$, we can conclude from Proposition \ref{estim_quad}, that  $\|\varphi(X^{\mu})\|_{\mathcal{S}^{2}_{0,\tau}}<\infty$.
		
		Now, we show that $\varphi$ is a contraction. Let $X,\tilde{X}\in \mathcal{S}^{2}_{0,\tau}$. By Proposition \ref{estim_quad}, we have
		\begin{align*}
			\| \varphi(X)-\varphi(\tilde{X}) \|_{\mathcal{S}^{2}_{0,\tau}}^2&\leq \mathbb{E}\left(\int_{0}^{\tau} |\sigma\left(s, X_s, \mu_s\right)-\sigma(s, \tilde{X}_s, \mu_s)|^2 \mathrm{d} s\right)\\
			&+C\mathbb{E}\left( \int_{0}^{\tau} \int_{\mathbb{R}^d\backslash\{0\}}|\beta(s, X_{s^-}, \mu_{s^-},z)-\beta(s, \tilde{X}_s, \mu_{s^-},z)|^2 \lambda(\mathrm{d} z)\mathrm{d}s\right)\\
			&+C\mathbb{E}\left(\tau \int_{0}^{\tau} |b(s, X_s, \mu_s)-b(s, \tilde{X}_s, \mu_s)|^2 \mathrm{d} s\right)\\
			& \leq C\gamma^2 \mathbb{E}\left((2\tau+\tau^2) \sup_{0\leq t\leq \tau}|X_{t}-\tilde{X}_{t}|^{2}\right)\\
			&\leq \alpha \|X-\tilde{X}\|_{\mathcal{S}^{2}_{0,\tau}}^2.\\
		\end{align*}
		Thus, by the Banach fixed-point theorem, we obtain the existence and uniqueness of the solution on $[0,\tau]$. By iterating, we extend the result on $[0,T]$.
		
		Finally, we show that $X^{\mu}\in\mathcal{S}^{2}_{0,T}$. Using assumptions $\left(\mathbf{A}_2\right)$ and $\left(\mathbf{A}_3\right)$, along with inequality \eqref{ineq_estim_quad}, we obtain
		\begin{align*}
			\| X^{\mu}\|_{\mathcal{S}^{2}_{0,T}}^2&\leq C_{T,\gamma} \mathbb{E}\left(|X_0|^2+\int_{0}^{T} (1+\sup_{0 \leq s \leq u}|X_u|^2) \mathrm{d} s+\int_{0}^{T} |b(s,0,\mu_s)|^2\mathrm{d}s+\int_{0}^{T} |\sigma(s,0,\mu_s)|^2\mathrm{d}s\right).\\
		\end{align*}
		Applying Gronwall's Lemma, and using assumptions $\left(\mathbf{A_1^2}\right)$, $\left(\mathbf{A_2^2}\right)$ and $\left(\mathbf{A_3^2}\right)$, we conclude that $X^{\mu}$ is and element of $\mathcal{S}^{2}_{0,T}$.
		This completes the proof.
	\end{proof}
	\begin{theorem}\label{Main_th}
		Assume $\left(\mathbf{H}\right)$, and $\left(\mathbf{A_1^2}\right)$-$\left(\mathbf{A_3^2}\right)$. Then, there exists a unique solution to the system $E_{\mathcal{D}}(\phi, b,\sigma,\beta)$.
	\end{theorem}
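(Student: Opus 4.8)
The plan is to realize a solution of $E_{\mathcal{D}}(\phi,b,\sigma,\beta)$ as a fixed point of the map $\Theta$ on $\mathcal{P}_2(\mathbb{D}([0,T],\mathbb{R}^d))$ which sends $\mu$ to the law $\mathcal{L}_{X^\mu}$, where $(X^\mu,K^\mu)=RP_{\mathcal{D}}(\phi,Y^\mu)$ is the solution provided by Theorem \ref{theorem_1}. First I would check that $\Theta$ is well defined: since $X^\mu\in\mathcal{S}^2_{0,T}$ by Theorem \ref{theorem_1}, its law lies in $\mathcal{P}_2(\mathbb{D}([0,T],\mathbb{R}^d))$. Next, I would observe that $\mu$ is a fixed point of $\Theta$ if and only if, setting $X:=X^\mu$, one has $\mu_s=\mathcal{L}_{X_s}$ and $\mu_{s^-}=\mathcal{L}_{X_{s^-}}$ for all $s\in[0,T]$ (push $\mu=\mathcal{L}_X$ forward by $\pi_s$ and $\pi_{s^-}$); plugging these identities into \eqref{Eq1} shows that $Y^\mu$ coincides with the process $Y$ of Definition \ref{def_EDS}, so that $(X,K^\mu)$ solves $E_{\mathcal{D}}(\phi,b,\sigma,\beta)$, and conversely every solution of $E_{\mathcal{D}}(\phi,b,\sigma,\beta)$ gives rise to a fixed point of $\Theta$.

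The core of the argument is a time-localized contraction estimate for $\Theta$. Given $\mu,\nu\in\mathcal{P}_2(\mathbb{D}([0,T],\mathbb{R}^d))$, I would write $Y^\mu-Y^\nu=M+V$ with $M$ the martingale part (the stochastic integral against $B$ and the compensated Poisson integral of the differences of $\sigma$ and of $\beta$) and $V$ the Lebesgue drift part. Applying Proposition \ref{estim_quad} with $p=1$ to $(X^\mu,K^\mu)$ and $(X^\nu,K^\nu)$, then using the Lipschitz bounds $(\mathbf{A_2^2})$ and $(\mathbf{A_3^2})$ together with the identity relating $\mathbb{E}[M-\tilde M]$ to the $\lambda$-integral of the jump coefficient increments, one gets for every $t\in[0,T]$
\begin{equation*}
\mathbb{E}\Big(\sup_{0\le s\le t}|X^\mu_s-X^\nu_s|^2\Big)\le C\int_0^t\Big(\mathbb{E}\big(\sup_{0\le r\le s}|X^\mu_r-X^\nu_r|^2\big)+W_2(\mu_s,\nu_s)^2+W_2(\mu_{s^-},\nu_{s^-})^2\Big)\,\mathrm{d}s,
\end{equation*}
with $C$ depending only on $\gamma$, $T$ and the constant of Proposition \ref{estim_quad}. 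Since $\pi_s,\pi_{s^-}:(\mathbb{D}([0,s],\mathbb{R}^d),d_s)\to(\mathbb{R}^d,|\cdot|)$ are $1$-Lipschitz, $W_2(\mu_s,\nu_s)\vee W_2(\mu_{s^-},\nu_{s^-})\le W_{\{s,2\}}(\mu,\nu)$, and Gronwall's lemma produces a constant $K>0$ with $\mathbb{E}(\sup_{0\le s\le t}|X^\mu_s-X^\nu_s|^2)\le K\int_0^t W_{\{s,2\}}(\mu,\nu)^2\,\mathrm{d}s$. Coupling $\Theta\mu$ and $\Theta\nu$ through the pair $(X^\mu,X^\nu)$ on the common probability space then yields
\begin{equation*}
W_{\{t,2\}}(\Theta\mu,\Theta\nu)^2\le K\int_0^t W_{\{s,2\}}(\mu,\nu)^2\,\mathrm{d}s,\qquad t\in[0,T].
\end{equation*}

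Finally I would iterate this bound: by induction $W_{\{t,2\}}(\Theta^n\mu,\Theta^n\nu)^2\le \tfrac{(Kt)^n}{n!}\,W_{\{T,2\}}(\mu,\nu)^2$, so for $n$ large enough $\Theta^n$ is a strict contraction on the complete metric space $(\mathcal{P}_2(\mathbb{D}([0,T],\mathbb{R}^d)),W_{\{T,2\}})$; by the Banach fixed-point theorem applied to $\Theta^n$, $\Theta$ admits a unique fixed point $\mu^\star$, and $(X^{\mu^\star},K^{\mu^\star})$ is the desired solution. For uniqueness at the level of the pair $(X,k)$: if $(X,k)$ and $(X',k')$ both solve $E_{\mathcal{D}}(\phi,b,\sigma,\beta)$, their laws are fixed points of $\Theta$ and hence both equal $\mu^\star$; each of $X$ and $X'$ then solves equation \eqref{Eq1} with the frozen measure $\mu=\mu^\star$, so the uniqueness part of Theorem \ref{theorem_1} forces $X=X'=X^{\mu^\star}$ and $k=k'=K^{\mu^\star}$. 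The only delicate points are the Gronwall and iteration bookkeeping and the verification that the pathwise coupling controls the Wasserstein distance between the laws; the mean-reflection feature itself presents no additional obstacle, being entirely encapsulated in Proposition \ref{estim_quad}.
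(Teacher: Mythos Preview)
Your proof is correct and follows the same overall strategy as the paper: freeze the law $\mu$, invoke Theorem~\ref{theorem_1} to define the map $\mu\mapsto\mathcal{L}_{X^\mu}$, derive the integral inequality on $\mathbb{E}\sup_{s\le t}|X^\mu_s-X^\nu_s|^2$ via Proposition~\ref{estim_quad} and the Lipschitz assumptions, bound $W_2(\mu_s,\nu_s)$ by $W_{\{s,2\}}(\mu,\nu)$, and apply Gronwall. The only difference is in how the fixed point is extracted: the paper shows the map is a strict contraction on $\mathcal{P}_2(\mathbb{D}([0,\tau],\mathbb{R}^d))$ for $\tau$ small and then patches successive intervals (as in the proof of Theorem~\ref{theorem_1}), whereas you keep the inequality $W_{\{t,2\}}(\Theta\mu,\Theta\nu)^2\le K\int_0^t W_{\{s,2\}}(\mu,\nu)^2\,\mathrm{d}s$ in integral form, iterate it to obtain the factorial decay $(KT)^n/n!$, and apply Banach to a power $\Theta^n$. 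Both routes are standard for McKean--Vlasov fixed points; yours has the minor advantage of avoiding the interval-gluing bookkeeping, while the paper's version makes the small-time contraction explicit.
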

	\begin{proof}
		By Theorem \ref{theorem_1}, for a fixed $\mu \in\mathcal{P}_2(\mathbb{D}([0,T],\mathbb{R}^d))$, there exists a unique solution $(X^{\mu},k^{\mu})$ to the problem $RP_{\mathcal{D}}(\phi,Y^{\mu})$ such that $X^{\mu}\in\mathcal{S}^{2}_{0,T}$ and $Y^{\mu}$ is given by \eqref{Eq1}.
		For $t\in [0,T]$, we define the mapping $\psi\colon\mathcal{P}_2(\mathbb{D}([0,t],\mathbb{R}^d))\to\mathcal{P}_2(\mathbb{D}([0,t],\mathbb{R}^d))$, which associates $\mu\in\mathcal{P}_2(\mathbb{D}([0,t],\mathbb{R}^d))$ with $\psi(\mu)=\mathcal{L}_{X^{\mu}}$. We show that $\psi$ is a contraction.
		
		Consider $\mu,\mu^{\prime}\in\mathbb{D}([0,t],\mathbb{R}^d)$. By Proposition \ref{estim_quad} and under assumptions $\left(\mathbf{A}_2^2\right)$ and $\left(\mathbf{A}_3^2\right)$, for every $t\in [0,T]$, we have
		\begin{align*}
			\mathbb{E}\left(\sup_{0\leq s\leq t}|X_{s}^{\mu}-X_{s}^{\mu^{\prime}}|^{2}\right)&\leq C\gamma^2 (2+t)\left(\int_{0}^{t}\mathbb{E}(\sup_{0\leq u\leq s}|X_{u}^{\mu}-X_{u}^{\mu^{\prime}}|^{2})+W_2(\mu_s, \mu^{\prime}_s)^2\mathrm{d}s\right)\\
			& + C\gamma^2 (2+t)\int_{0}^{t} W_2(\mu_{s^-}, \mu_{s^-}^{\prime})^2 \mathrm{d} s \\
			&\leq 2C\gamma^2 (2+t)\left(\int_{0}^{t}\mathbb{E}(\sup_{0\leq u\leq s}|X_{u}^{\mu}-X_{u}^{\mu^{\prime}}|^{2}) \mathrm{d}s+ tW_{\{t,2\}}(\mu, \mu^{\prime})^2 \right).
		\end{align*}
		In the final line, we have used the fact that $W_2(\mu_s, \mu^{\prime}_s)\lor W_2(\mu_{s^-}, \mu^{\prime}_{s^-})\leq W_{\{s,2\}}(\mu, \mu^{\prime})$. By applying Gronwall’s inequality and considering the definition of the Wasserstein metric, we obtain
		\begin{equation*}
			W_{\{t,2\}}\left(\psi(\mu), \psi(\mu^{\prime})\right)^2\leq 2C\gamma^2 t(2+t) C_{T,\gamma} W_{\{t,2\}}(\mu, \mu^{\prime})^2.
		\end{equation*}
		Here, $C_{T,\gamma}$ is a constant depending on $T$ and $\gamma$. Therefore, as in the proof of Theorem \ref{theorem_1}. we can find a positive time $\tau$ such that for all $t\leq \tau$, $\psi$ is a contraction. The proof follows similarly to that of Theorem \ref{theorem_1}.
	\end{proof}
	\begin{proposition}\label{estim-VAR}
		Assume $\left(\mathbf{H}\right)$, and $\left(\mathbf{A_1^2}\right)$-$\left(\mathbf{A_3^2}\right)$. Let $(X,k)$ be the solution to the system $E_{\mathcal{D}}(\phi, b,\sigma,\beta)$. For $s,t\in [0,T]$, there exists a càdlàg function $c$ with $c(0)=0$ such that
		\begin{equation*}
			\mathbb{E}(\sup_{s\leq r\leq t}|X_s-X_r|^2)+\sup_{s\leq r\leq t}|k_{s}-k_{r}|^{2}\leq c(t-s).
		\end{equation*}
	\end{proposition}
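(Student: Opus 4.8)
The plan is to derive this path-regularity estimate from Corollary~\ref{cor_1} together with the linear growth of the coefficients. Fix $0\le s\le t\le T$ (for $s>t$ there is nothing to prove). Write the driving process of $E_{\mathcal D}(\phi,b,\sigma,\beta)$ as $Y=M+V$ with $M_\cdot:=\int_0^\cdot\sigma(r,X_r,\mathcal L_{X_r})\,\mathrm dB_r+\int_0^\cdot\int_{\mathbb R^d\setminus\{0\}}\beta(r,X_{r^-},\mathcal L_{X_{r^-}},z)\,\tilde N(\mathrm dr,\mathrm dz)$ and $V_\cdot:=\int_0^\cdot b(r,X_r,\mathcal L_{X_r})\,\mathrm dr$. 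By Theorem~\ref{Main_th} we have $X\in\mathcal S^2_{0,T}$, and combining this with the linear growth in $(x,\mu)$ implied by $(\mathbf{A_2^2})$--$(\mathbf{A_3^2})$, the identity $W_2(\mathcal L_{X_r},\delta_0)^2=\mathbb E(|X_r|^2)$, the integrability of the coefficients frozen at $x=0$ provided by $(\mathbf{A_2^2})$--$(\mathbf{A_3^2})$, and the Burkholder--Davis--Gundy inequality, one checks that $M\in\mathcal S^2_{0,T}$ and $\mathbb E(|V|_T^2)<\infty$. Hence the hypotheses of Proposition~\ref{estim_quad} (with $p=1$), and therefore those of Corollary~\ref{cor_1}, are in force for $(X,k)$.

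Applying Corollary~\ref{cor_1} on $[s,t]$ gives
\[
\mathbb E\Big(\sup_{s\le r\le t}|X_s-X_r|^2\Big)+\sup_{s\le r\le t}|k_s-k_r|^2\le C\,\mathbb E\big([M]_t-[M]_s+(|V|_t-|V|_s)^2\big).
\]
Since $B$ and $\tilde N$ are independent, the Brownian part of $M$ is continuous and the compensated-Poisson part is purely discontinuous, so their quadratic variations add up; taking expectations and using the isometry of the compensated Poisson integral,
\[
\mathbb E\big([M]_t-[M]_s\big)=\mathbb E\int_s^t\Big(|\sigma(r,X_r,\mathcal L_{X_r})|^2+\int_{\mathbb R^d\setminus\{0\}}|\beta(r,X_{r^-},\mathcal L_{X_{r^-}},z)|^2\,\lambda(\mathrm dz)\Big)\mathrm dr,
\]
while Cauchy--Schwarz yields $\mathbb E\big((|V|_t-|V|_s)^2\big)\le d\,(t-s)\,\mathbb E\int_s^t|b(r,X_r,\mathcal L_{X_r})|^2\,\mathrm dr$.

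Next I would introduce
\[
\psi(r):=\mathbb E\int_0^r\Big(|\sigma(u,X_u,\mathcal L_{X_u})|^2+\int_{\mathbb R^d\setminus\{0\}}|\beta(u,X_{u^-},\mathcal L_{X_{u^-}},z)|^2\,\lambda(\mathrm dz)+|b(u,X_u,\mathcal L_{X_u})|^2\Big)\mathrm du .
\]
The same linear-growth arguments as above (using $X\in\mathcal S^2_{0,T}$ and the $\mathbb H^{2,\cdot}$-integrability of the frozen coefficients) show $\psi$ is finite on $[0,T]$; it is clearly nondecreasing and absolutely continuous with $\psi(0)=0$. The two displays above then give, for all $0\le s\le t\le T$,
\[
\mathbb E\Big(\sup_{s\le r\le t}|X_s-X_r|^2\Big)+\sup_{s\le r\le t}|k_s-k_r|^2\le C(1+dT)\big(\psi(t)-\psi(s)\big).
\]
Finally set $c(u):=C(1+dT)\,\sup_{0\le a\le T-u}\big(\psi(a+u)-\psi(a)\big)$ for $u\in[0,T]$: it is nondecreasing with $c(0)=0$, and by uniform continuity of $\psi$ on the compact interval $[0,T]$ one has $c(u)\to0$ as $u\downarrow0$; replacing $c$ by its right-continuous modification makes it càdlàg without changing $c(0)=0$, and the displayed bound shows it has the required property.

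I expect the two points requiring some care to be: (i) the identity for $\mathbb E([M]_t-[M]_s)$, which relies on the orthogonality of the Brownian and Poisson contributions and on the Poisson isometry; and (ii) the passage from a bound in terms of $\psi(t)-\psi(s)$, which a priori depends on both endpoints, to one depending only on $t-s$, handled by passing to the modulus of continuity of the continuous function $\psi$ and then to a right-continuous (hence càdlàg) version. The remainder is routine linear-growth bookkeeping based on $X\in\mathcal S^2_{0,T}$.
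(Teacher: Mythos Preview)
Your argument is correct and starts from the same place as the paper: apply Corollary~\ref{cor_1} on $[s,t]$ and bound $\mathbb E([M]_t-[M]_s)$ and $\mathbb E((|V|_t-|V|_s)^2)$ via the linear growth implied by $(\mathbf{A_2^2})$--$(\mathbf{A_3^2})$ together with $X\in\mathcal S^2_{0,T}$.

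The only genuine difference is in the last step, the construction of the function $c$. The paper factors out $(t-s)$ directly and writes an explicit formula
\[
c(x)=C|x|\Big(1+\mathbb E\sup_{[0,T]}|X|^2+\int_0^T\mathbb E|\sigma(|x|u,0,\delta_0)|^2\,\mathrm du+\int_0^T\mathbb E|b(|x|u,0,\delta_0)|^2\,\mathrm du\Big),
\]
using a change of variable in the integrals of the frozen coefficients. You instead package everything into the absolutely continuous accumulator $\psi$ and define $c$ as its modulus of continuity on $[0,T]$, then pass to the right-continuous modification. Your route is slightly more robust: it yields a function of $t-s$ alone without relying on any particular structure of $u\mapsto\sigma(u,0,\delta_0)$ or $u\mapsto b(u,0,\delta_0)$, and the càdlàg property and $c(0)=0$ follow transparently from monotonicity and uniform continuity of $\psi$. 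The paper's formula has the advantage of being explicit, which is convenient when one later needs $c$ to be visibly of order $|x|$ (as in the particle approximation argument), but both approaches prove the stated proposition.
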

	\begin{proof}
		Let $0\leq s\leq t\leq T$. From Corollary \ref{cor_1}, we have
		\begin{multline*}
			\mathbb{E}\left(\sup_{s\leq r\leq t}|X_s-X_r|^2\right)+\sup_{s\leq r\leq t}|k_{s}-k_{r}|^{2}\leq \\C\mathbb{E}\left(\int_s^t |\sigma(u,X_u,\mathcal{L}_{X_u})|^2\mathrm{d}u+ \int_s^t\int_{\mathbb{R}^d\backslash{\{0\}}} |\beta(u,X_u,\mathcal{L}_{X_u},z)|^2\lambda(\mathrm{d}z)\mathrm{d}u+\int_s^t |b(u,X_u,\mathcal{L}_{X_u})|^2\mathrm{d}u\right)    
		\end{multline*}
		using the Lipchitz assumptions on the coefficients, we obtain
		\begin{multline*}
			\mathbb{E}\left(\sup_{s\leq r\leq t}|X_s-X_r|^2\right)+\sup_{s\leq r\leq t}|k_{s}-k_{r}|^{2}\leq\\ C(t-s)\left(1+\mathbb{E}(\sup_{0\leq s\leq T }|X_s|^2)+\int_0^T \mathbb{E}(\sigma((t-s)u,0,\delta_0)^2)\mathrm{d}u+\int_0^T \mathbb{E}(b((t-s)u,0,\delta_0)^2)\mathrm{d}u\right)
		\end{multline*}
		Thus, $c$ is given by $$c(x)= C|x|\left(1+\mathbb{E}(\sup_{0\leq s\leq T }|X_s|^2)+\int_0^T \mathbb{E}(\sigma(|x|u,0,\delta_0)^2)\mathrm{d}u+\int_0^T \mathbb{E}(b(|x|u,0,\delta_0)^2)\mathrm{d}u\right)$$
		which is well defined by assumption $\left(\mathbf{A_2^2}\right)$ and the fact that $X\in\mathcal{S}_{0,T}^{2}$.
	\end{proof}
	\begin{corollary}
		Under the same conditions as in Proposition \ref{estim-VAR}, the measure $\mathrm{d}k$ is absolutely continuous with respect to the Lebesgue measure. 
	\end{corollary}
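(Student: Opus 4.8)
The plan is to transfer the problem, through the bijection used in the proof of Theorem~\ref{th1}, to an ordinary Skorokhod problem in a fixed convex domain whose driving path is absolutely continuous in time. By Remark~\ref{rq_iii} we have $\mathrm{d}k_s=\eta_s\,\mathrm{d}|k|_s$ with $|\eta_s|=1$, so it suffices to prove that the deterministic finite-variation function $t\mapsto|k|_t$ is absolutely continuous. Recall from Definition~\ref{def_EDS} that $Y_t=X_0+\int_0^t b(s,X_s,\mathcal{L}_{X_s})\,\mathrm{d}s+M_t$, where $M_t=\int_0^t\sigma(s,X_s,\mathcal{L}_{X_s})\,\mathrm{d}B_s+\int_0^t\int_{\mathbb{R}^d\setminus\{0\}}\beta(s,X_{s^-},\mathcal{L}_{X_{s^-}},z)\,\tilde N(\mathrm{d}s,\mathrm{d}z)$; by $(\mathbf{A_2^2})$--$(\mathbf{A_3^2})$ and $X\in\mathcal{S}^{2}_{0,T}$ (Theorems~\ref{theorem_1} and~\ref{Main_th}), $M$ is a square-integrable martingale, so $\mathbb{E}(M_t)=0$ and hence $y_t:=\mathbb{E}(Y_t)=\mathbb{E}(X_0)+\int_0^t\mathbb{E}\big(b(s,X_s,\mathcal{L}_{X_s})\big)\,\mathrm{d}s$. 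In particular $y$ is absolutely continuous, since $|y_t-y_s|\le\int_s^t\mathbb{E}\,|b(r,X_r,\mathcal{L}_{X_r})|\,\mathrm{d}r$ and $r\mapsto\mathbb{E}\,|b(r,X_r,\mathcal{L}_{X_r})|$ lies in $L^1([0,T])$ by $(\mathbf{A_2^2})$.

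By the proof of Theorem~\ref{th1}, $\overline{k}_t:=l^{*}(k_t)$ is the reflection term of the Skorokhod problem $RP_{D}(\overline{y})$ in the time-dependent convex domain $D_t=\Phi_t^{-1}(\mathcal{D})$ of Lemma~\ref{lemma_det}, with $\overline{y}_t=l^{*}(y_t)$. Writing $D_\star:=l^{-1}(\mathcal{D})$, one has $D_t=D_\star-l^{-1}(v_t)$, so the translated pair $(\widehat{x},\overline{k})$ with $\widehat{x}_t:=\overline{x}_t+l^{-1}(v_t)$ solves the Skorokhod problem in the \emph{fixed} bounded convex domain $D_\star$, with nonempty interior, driven by $\widehat{y}_t:=l^{*}(y_t)+l^{-1}(v_t)$. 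Since $l^{*}$ is a linear bijection, $\mathrm{d}k\ll\mathrm{d}t$ if and only if $\mathrm{d}\overline{k}\ll\mathrm{d}t$, and $\widehat{y}$ is absolutely continuous as soon as $v$ is (in particular when $\phi_t$ does not depend on $t$). Hence the claim reduces to the following regularity property of the convex Skorokhod map: the reflection term of a Skorokhod problem in a fixed bounded convex domain with nonempty interior inherits absolute continuity from its driving path.

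This last point is the main obstacle, and it is the classical regularity of the Moreau sweeping process / convex Skorokhod map. When $\widehat{y}$ is absolutely continuous, the solution $(\widehat{x},\overline{k})$ is absolutely continuous, and the minimality of $\overline{k}$ (namely $\mathrm{d}\overline{k}_t=\overline{\eta}_t\,\mathrm{d}|\overline{k}|_t$ with $\overline{\eta}_t\in\mathcal{N}_{\widehat{x}_t}$) forces $\dot{\widehat{x}}_t$ to be, for a.e.\ $t$, the point of the tangent cone of $D_\star$ at $\widehat{x}_t$ closest to $\dot{\widehat{y}}_t$; since that cone contains $0$, this gives the velocity bound $|\dot{\overline{k}}_t|=|\dot{\widehat{x}}_t-\dot{\widehat{y}}_t|\le|\dot{\widehat{y}}_t|$ a.e., hence $|\overline{k}|_t-|\overline{k}|_s\le\int_s^t|\dot{\widehat{y}}_r|\,\mathrm{d}r$ and $|\overline{k}|$ is absolutely continuous. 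Equivalently, one can recover this velocity bound from scratch by applying the a priori estimate of Proposition~\ref{estim_quad}/Corollary~\ref{cor_1} on small subintervals together with the global estimate $|k|_T<\infty$ of Theorem~\ref{th1}. The genuinely delicate part is that Proposition~\ref{estim-VAR} only controls the oscillation of $k$ on an interval, so one must additionally use the convexity of $\mathcal{D}$ and the bi-Lipschitz property of $l$ to convert this into a bound on the total variation $|k|_t-|k|_s$; once that is done, a routine covering argument shows that $|k|$ maps Lebesgue-null sets to null sets, which is the desired absolute continuity of $\mathrm{d}k$.
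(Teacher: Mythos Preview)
Your route is genuinely different from the paper's one-line invocation of Proposition~\ref{estim-VAR}. Reducing to the deterministic Skorokhod problem for $y_t=\mathbb{E}(Y_t)$, observing that $y$ is absolutely continuous because the martingale part of $Y$ has zero mean, translating to the fixed convex set $D_\star=l^{-1}(\mathcal{D})$, and then invoking the regularity of the Moreau sweeping process is a correct and transparent mechanism. When $\phi$ is time-independent (equivalently $v$ constant) your argument is complete and arguably more informative than the paper's: the paper simply points to the increment bound $\sup_{s\le r\le t}|k_s-k_r|^{2}\le c(t-s)$ with $c(x)\sim C|x|$, which on its face only gives $\tfrac12$-H\"older continuity of $k$, not absolute continuity.

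There is, however, a real gap. Under $(\mathbf{H})$ the shift $v$ is only c\`adl\`ag, so $\widehat{y}_t=l^{*}(y_t)+l^{-1}(v_t)$ need not be absolutely continuous and the sweeping-process regularity you quote is unavailable. Your parenthetical ``as soon as $v$ is'' is honest but leaves the corollary unproved in the generality of $(\mathbf{H})$; indeed, if $v$ is a Cantor-type function the moving domain $D_t=D_\star-l^{-1}(v_t)$ can force nontrivial variation of $k$ on a Lebesgue-null time set, so the conclusion itself is suspect there. Your final paragraph does not close this gap: the estimate in Proposition~\ref{estim-VAR} bounds the \emph{oscillation} $|k_t-k_s|$, not the total variation $|k|_t-|k|_s$, and no ``routine covering argument'' converts a $\tfrac12$-H\"older bound into absolute continuity (the Cantor staircase is H\"older of every order $\le \log 2/\log 3$, hence in particular $\tfrac12$-H\"older, yet singular). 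The sentence about using convexity of $\mathcal{D}$ and the bi-Lipschitz property of $l$ to pass to a total-variation bound is an assertion, not an argument. If you want the general case you must either add the hypothesis that $v$ is absolutely continuous, or produce an explicit inequality of the form $|k|_t-|k|_s\le\int_s^t g(r)\,\mathrm{d}r$ for some $g\in L^{1}([0,T])$.
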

	\begin{proof}
		This result follows directly from the estimate in Proposition \ref{estim-VAR}.  
	\end{proof}
	\subsection{Particle approximation of mean-reflected McKean–Vlasov SDEs}
	Let $N\geq 1$ and $i\in\{1,\dots,N\}$. For $t\in [0,T]$, we consider the following $N$-particle system: \\
	
	$\textbf{(S)} \left\{\begin{aligned}
		&X_t^{i,N}= X_{0}^i+ \int_{0}^{t} b\left(s, X_s^{i,N}, \frac{1}{N}\sum_{j=1}^N\delta_{X_s^{j,N}}\right) \mathrm{d}s+ \int_{0}^{t} \sigma\left(s, X_s^{i,N},\frac{1}{N}\sum_{j=1}^N\delta_{X_s^{j,N}}\right) \mathrm{d} B_s^i \\
		& \quad\quad+\int_{0}^{t}\int_{\mathbb{R}^d\backslash\{0\}}\beta\left(s, X_{s^-}^{i,N}, \frac{1}{N}\sum_{j=1}^N\delta_{X_{s^-}^{j,N}},z\right) \tilde{N}^i(\mathrm{d} s, \mathrm{d} z)+ K_t^{N}, \\
		&
		\frac{1}{N}\sum_{i=1}^N\phi_t(X_t^{i,N})\in \mathcal{D}, \\
		& \text{for every} \;z\in\mathcal{D},\; \int_0^t \langle \frac{1}{N}\sum_{i=1}^N\phi_s(X_s^{i,N}) - z,  \mathrm{d}K_s^{N} \rangle \leq 0.
	\end{aligned}\right.$
	
	Where, $(X_{0}^i,B^i,\tilde{N}^i)$ are sequence of independent copies of $(X_0,B,\tilde{N})$.
	\begin{proposition}
		Assume $\left(\mathbf{H}\right)$, and $\left(\mathbf{A_1^2}\right)$-$\left(\mathbf{A_3^2}\right)$. There exists a unique solution for the system $\textbf{\upshape (S)}$.
	\end{proposition}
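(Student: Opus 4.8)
The plan is to reduce the $N$-particle system \textbf{(S)} to a (non-mean-reflected) multidimensional Skorokhod problem on a product space, exactly mirroring the strategy used for the McKean–Vlasov equation $E_{\mathcal{D}}(\phi,b,\sigma,\beta)$. First I would observe that the coupled constraint in \textbf{(S)} is itself a mean reflection: introduce the $dN$-dimensional process $\mathbf{X}_t=(X^{1,N}_t,\dots,X^{N,N}_t)$, the common reflection term $\mathbf{K}_t=(K^N_t,\dots,K^N_t)$ (the same $\mathbb{R}^d$-vector repeated $N$ times, so that $\mathrm{d}\mathbf{K}$ lives in the diagonal subspace $\Delta=\{(z,\dots,z):z\in\mathbb{R}^d\}\subset\mathbb{R}^{dN}$), and the map $\boldsymbol{\phi}_t(x^1,\dots,x^N):=\tfrac1N\sum_{i=1}^N\phi_t(x^i)$, which still has the affine form required by $(\mathbf{H})$ since $\phi_t=l\circ l^*+v_t$ gives $\boldsymbol{\phi}_t(\mathbf x)=l\circ l^*\big(\tfrac1N\sum x^i\big)+v_t$; writing $\bar l:\mathbb{R}^{dN}\to\mathbb{R}^d$ for the averaging-then-$l$ map one checks $(\mathbf{H})$ holds in the appropriate generalized sense. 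The cleanest route, however, is to note that the empirical-measure map $\mu^N_s=\tfrac1N\sum_j\delta_{X^{j,N}_s}$ makes the drift/diffusion/jump coefficients of particle $i$ depend on the whole vector $\mathbf{X}$, so \textbf{(S)} is a standard (finite-dimensional) SDE with a single mean-reflection constraint on the $\mathbb{R}^d$-valued quantity $\mathbb{E}\big[\tfrac1N\sum_i\phi_t(X^{i,N}_t)\big]$ — wait, the constraint in \textbf{(S)} is pathwise, $\tfrac1N\sum_i\phi_t(X^{i,N}_t)\in\mathcal D$, not in expectation, so it is genuinely a constraint on the law only through the symmetry of the system.

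Given that subtlety, the approach I would actually carry out is a Picard/fixed-point argument analogous to Theorems \ref{theorem_1} and \ref{Main_th}, but now in the space $\big(\mathcal S^2_{0,T}\big)^N$ of $N$-tuples. Fix a small stopping time $\tau$ as in the proof of Theorem \ref{theorem_1}, defined via the constant $C$ of Proposition \ref{estim_quad} applied in dimension $dN$ and the Lipschitz constant $\gamma$ (noting $W_2\big(\tfrac1N\sum\delta_{x^i},\tfrac1N\sum\delta_{\tilde x^i}\big)^2\le\tfrac1N\sum|x^i-\tilde x^i|^2$, so the empirical-measure dependence is $\gamma$-Lipschitz in the $\ell^2$ sense on $\mathbb{R}^{dN}$). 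Define $\Psi:\big(\mathcal S^2_{0,\tau}\big)^N\to\big(\mathcal S^2_{0,\tau}\big)^N$ sending $(X^1,\dots,X^N)$ to the first coordinate of the solution of the single mean-reflected Skorokhod problem $RP_{\mathcal D}(\boldsymbol\phi,\mathbf Y)$ on $\mathbb{R}^{dN}$, where $\mathbf Y$ is built from the drift, Brownian and Poisson integrals with the frozen empirical measure; this is well-posed by Theorem \ref{th1} (after checking $(\mathbf{H})$ for $\boldsymbol\phi$, which is where I need the map $x\mapsto\tfrac1N\sum x^i$ composed with $l\circ l^*$ to fit the hypothesis — this is the one structural point requiring a small argument, and the cleanest fix is to note Theorem \ref{th1} only needs $\boldsymbol\phi_t(\mathbf x)=\bar l\circ\bar l^*(\mathbf x)+v_t$ for some linear surjection, and to re-examine Lemma \ref{lemma_det} and Theorem \ref{th1}'s proof, which go through verbatim when $l$ is replaced by a bounded linear map with bounded right-inverse). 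Then Proposition \ref{estim_quad} gives $\|\Psi(\mathbf X)-\Psi(\tilde{\mathbf X})\|^2\le C\gamma^2(2\tau+\tau^2)\|\mathbf X-\tilde{\mathbf X}\|^2\le\alpha\|\mathbf X-\tilde{\mathbf X}\|^2$ on $[0,\tau]$, a contraction; Banach's fixed-point theorem yields existence and uniqueness on $[0,\tau]$, and iteration over $\lceil T/\tau\rceil$ intervals (with $\tau$ depending only on $C,\gamma$, hence uniform) extends to $[0,T]$. Finally an a priori bound $\|\mathbf X\|_{\mathcal S^2_{0,T}}<\infty$ via \eqref{ineq_estim_quad}, Burkholder–Davis–Gundy and Gronwall, exactly as at the end of the proof of Theorem \ref{theorem_1}, closes the argument and also confirms $\mathbb{E}\big(\tfrac1N\sum_i\phi_0(X^i_0)\big)\in\mathcal D$ from $(\mathbf A_1^2)$.

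The main obstacle is verifying that hypothesis $(\mathbf{H})$, which is stated for a \emph{bijective} linear $l$ on $\mathbb{R}^d$, accommodates the averaging map $\boldsymbol\phi_t(\mathbf x)=l\circ l^*\big(\tfrac1N\sum_i x^i\big)+v_t$ on $\mathbb{R}^{dN}$: this map is not injective, so Theorem \ref{th1} cannot be invoked as a black box. I would handle this by redoing the short argument of Lemma \ref{lemma_det} and Theorem \ref{th1} with $l$ replaced by $\bar l:=l\circ(\text{averaging}):\mathbb{R}^{dN}\to\mathbb{R}^d$; the only place bijectivity is used is to push the constraint domain $\mathcal D$ back to a time-dependent domain $D_t=\bar\Phi_t^{-1}(\mathcal D)\subset\mathbb{R}^{dN}$ and to transfer the minimality condition, and since $\bar l$ admits a bounded linear right inverse (e.g.\ $z\mapsto(l^{-1}z,0,\dots,0)$ or the symmetric $z\mapsto\tfrac1N(l^{-1}z,\dots,l^{-1}z)$) the preimage domain is still a (cylindrical, unbounded in the kernel directions) closed convex set, and the estimate $|k|_T\le C(\sup_s|\mathbb E Y_s|^2+1)$ survives; alternatively, since the constraint only pins down the average $\tfrac1N\sum_i\phi_t(X^{i,N}_t)$ and $\mathrm{d}K^N$ acts identically on every particle, one can project onto the $d$-dimensional average, solve the genuine mean-reflected problem there by Theorem \ref{th1} (which applies since the average of $N$ i.i.d.-driven copies is again driven by an $\mathcal U$-process), recover $K^N$, and then the individual $X^{i,N}$ are determined by plain SDEs with an additive finite-variation forcing — this decoupling is probably the slickest presentation, but the fixed-point argument above is the safest and I would present that, remarking on the projection approach. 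A minor additional point to mention is measurability/adaptedness of $\mathbf K$, which follows since the solution of Theorem \ref{th1} has deterministic $k$ depending only on the deterministic function $s\mapsto\mathbb E(Y_s)$.
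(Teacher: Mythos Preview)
The approach has a conceptual gap. You correctly notice mid-paragraph that the constraint in \textbf{(S)} is pathwise --- $\tfrac{1}{N}\sum_i\phi_t(X^{i,N}_t)\in\mathcal{D}$ for every $\omega$, not in expectation --- yet you then build the fixed-point map $\Psi$ around the mean-reflected problem $RP_{\mathcal{D}}(\boldsymbol{\phi},\mathbf{Y})$ of Definition~\ref{sk_det}/Theorem~\ref{th1}. That problem produces a \emph{deterministic} reflection term and enforces only $\mathbb{E}[\boldsymbol{\phi}_t(\mathbf{X}_t)]\in\mathcal{D}$; a fixed point of your $\Psi$ would therefore satisfy $\mathbb{E}\bigl[\tfrac{1}{N}\sum_i\phi_t(X^{i,N}_t)\bigr]\in\mathcal{D}$ with deterministic $K^N$, which is not the system \textbf{(S)} (where $K^N$ is an adapted, genuinely random process, as the subsequent propagation-of-chaos proof confirms). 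The ``alternative'' you sketch --- project onto the average and invoke Theorem~\ref{th1} --- has the same defect, since Theorem~\ref{th1} again returns a deterministic $k$. The non-bijectivity of the averaging map, which you flag as the main obstacle, is actually a red herring: the real issue is that no version of the mean-reflection machinery applies.

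What the paper does instead is recognise that the pathwise constraint calls for the \emph{classical} Skorokhod problem in a time-dependent domain, applied $\omega$-by-$\omega$. Since $\phi$ is affine, $\tfrac{1}{N}\sum_i\phi_t(X^{i,N}_t)=\Phi_t\bigl(l^*(\tfrac{1}{N}\sum_i X^{i,N}_t)\bigr)$, so the constraint becomes $l^*(\tfrac{1}{N}\sum_i X^{i,N}_t)\in D_t:=\Phi_t^{-1}(\mathcal{D})$. The paper replicates $l^*(\tfrac{1}{N}\sum_j Y^{j,N})$ across $N$ copies and reflects in the product domain $\{(x^1,\dots,x^N)\in\mathbb{R}^{Nd}:\Phi_t(x^i)\in\mathcal{D}\ \forall i\}$ via the appendix result (Theorem~\ref{prop_A}); by symmetry all components of the resulting $\overline{K}$ coincide, one sets $K^N:=(l^*)^{-1}(\overline{K}^1)$, and the minimality condition transfers exactly as in Step~2 of the proof of Theorem~\ref{th1}, but used pathwise rather than after taking expectations. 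The fixed-point argument and the bound $W_2\bigl(\tfrac{1}{N}\sum\delta_{u_i},\tfrac{1}{N}\sum\delta_{v_i}\bigr)\le N^{-1/2}|u-v|$ are then as you describe, with the contraction estimate supplied by Proposition~\ref{estim_quad_Ap} rather than Proposition~\ref{estim_quad}.
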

	\begin{proof}
		Let $\{D_t\}_{t\in [0,T]}$ be the family of time-dependent sets defined by $$D_t=\left\{(x^1,\dots,x^N)\in\mathbb{R}^{Nd}:\; \Phi_t(x^i)\in\mathcal{D},\;\text{for all}\;i\in{1,\dots,N}\right\},\;\;t\in [0,T]$$
		where $\Phi$ is given in \eqref{Phi}.
		
		For $(t,x^1,\dots,x^N,z)\in[0,T]\times\mathbb{R}^{Nd}\times\mathbb{R}^d\backslash{\{0\}}$, let $\tilde{b}$, $\tilde{\sigma}$, and $\tilde{\beta}$ be the mappings whose components are given by  
		$$\tilde{b}(t,x) := \left( b\left(t, x^i, \frac{1}{N} \sum_{j=1}^N \delta_{x^j} \right) \right)_{i=1,\dots,N}\in\mathbb{R}^{Nd},\;\; \tilde{\sigma}(t,x) := \left( \sigma\left(t, x^i, \frac{1}{N} \sum_{j=1}^N \delta_{x^j} \right) \right)_{i=1,\dots,N}\in\mathbb{R}^{d\times Nm},$$
		and $$
		\tilde{\beta}(t,x,z):= \left( \beta\left(t, x^i, \frac{1}{N} \sum_{j=1}^N \delta_{x^j},z \right) \right)_{i=1,\dots,N}\in\mathbb{R}^{Nd}.$$ 
		Let $X$ be an $\mathbb{R}^{Nd}$-valued process. Define \begin{equation*}
			Y^{X,N}_{\cdot}= X_{0}+ \int_{0}^{\cdot} \tilde{b}(s, X_s) \mathrm{d} s+ \int_{0}^{\cdot}\tilde{\sigma}(s, X_s) \mathrm{d} B_s+\int_{0}^{\cdot}\int_{\mathbb{R}^d\backslash\{0\}}\tilde{\beta}(s,X_{s^-},z) \tilde{N}(\mathrm{d} s, \mathrm{d} z).
		\end{equation*}
		where $X_0:=(X_0^1,\dots,X_0^N)\in\mathbb{R}^{Nd} $, $B:=(B^0,\dots,B^N)\in\mathbb{R}^{Nm}$, and $\tilde{N}:=(\tilde{N}^0,\dots,\tilde{N}^N)\in\mathbb{R}^{Nd}$.
		Define $$\overline{Y}^{X,N}=\left(l^*\left( \frac{1}{N}\sum_{j=1}^NY^{X,N,j}\right),\dots,l^*\left(\frac{1}{N}\sum_{j=1}^NY^{X,N,j}\right)\right)\in\mathbb{R}^{Nd}.$$
		By repeating the arguments from Step 2 of the proof of Theorem \ref{th1}, we obtain an adapted solution $(\overline{X},\overline{K})$ to the problem $RP_{\{D_t\}_{t\in [0,T]}}(\overline{Y}^{X,N})$. 
		
		Define the mapping $\varphi$ by
		$$\varphi(X):=Y^{X,N}+K^N,\;\; \text{where}\;\; K^N:=\left((l^{*})^{-1}(\overline{K}^1),\dots,(l^{*})^{-1}(\overline{K}^N)\right),$$
		we have that the $K^{i,N}=K^{j,N}$ for all $i\neq j$. Thus, we can  verify similarly as to Step 2 of the proof of Theorem \ref{th1} that $(\varphi(X),K^N)$ satisfies the second and third statements of system $\textbf{\upshape (S)}$. 
		
		Using the assumptions on $b$, $\sigma$, and $\beta$, and noting that for all $u=(u^1,\dots,u^N),\;v=(v_1,\dots,v^N)\in\mathbb{R}^{Nd}$, we have
		$$W_2\left(\frac{1}{N} \sum_{i=1}^N \delta_{u_i}, \frac{1}{N} \sum_{i=1}^N \delta_{v_i}\right) \leq\frac{1}{\sqrt{N}}|u-v|,$$ 
		we can repeat the arguments from the proof of Theorem \ref{theorem_1} to establish the existence and uniqueness of $X^N$ such that $\varphi(X^N)=X^N$. Therefore, the first statement of $\textbf{\upshape (S)}$ holds. Since all components of $K^N$ are identical, we may, without confusion, write $K^{i,N}=K^N$ for all $i$ in system $\textbf{\upshape (S)}$.
	\end{proof}
	\begin{theorem}
		Let $p\geq 4$. Assume $\left(\mathbf{H}\right)$, and $\left(\mathbf{A_1^p}\right)$-$\left(\mathbf{A_3^p}\right)$. Let $(X^{i},k)$ be the solution of $RP_{\mathcal{D}}(\phi,Y^i)$, adapted to the complete filtration generated by $X_0^i$, $B^i$, and $N^i$, where $Y^i$ is given by
		$$Y_t^i=X_{0}^i+\int_{0}^{t} b(s,X_s^i, \mathcal{L}_{X_s^i})\mathrm{d}s+ \int_{0}^{t} \sigma(s, X_s^i,\mathcal{L}_{X_s^i})\mathrm{d} B_s^i+\int_{0}^{t}\int_{\mathbb{R}^d\backslash\{0\}}\beta(s,  X_{s^-}^i,\mathcal{L}_{X_{s^-}^i},z)\tilde{N}^i(\mathrm{d}s, \mathrm{d}z).$$    
		Then, 		$$
		\lim _{N \rightarrow \infty} \max _{1 \leq i \leq N} \mathbb{E}\left( \sup _{0 \leq t \leq T}|X_t^{i, N}-X_t^i|^2\right)=0.
		$$
	\end{theorem}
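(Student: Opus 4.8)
The plan is to prove propagation of chaos by a coupling argument: we compare the particle $X^{i,N}$ with the independent copy $X^i$ of the McKean--Vlasov solution driven by the same data $(X_0^i, B^i, \tilde N^i)$. The two processes solve reflection problems $RP_{\mathcal D}(\phi, Y^{i,N})$ and $RP_{\mathcal D}(\phi, Y^i)$ respectively, where $Y^{i,N}$ uses the empirical measure $\mu^N_s := \tfrac1N\sum_{j=1}^N\delta_{X_s^{j,N}}$ and $Y^i$ uses $\mathcal L_{X_s^i}$. First I would apply the a priori estimate of Proposition \ref{estim_quad} (with the same domain $\mathcal D$) to these two solutions: writing $Y^{i,N}=M^{i,N}+V^{i,N}$ and $Y^i=M^i+V^i$ in their canonical semimartingale decompositions, inequality \eqref{ineq_estim_quad} with $p=1$ gives, for any stopping time $\tau\le T$,
\begin{equation*}
\mathbb E\Big(\sup_{0\le t\le\tau}|X^{i,N}_t-X^i_t|^2\Big)\le C\,\mathbb E\Big([M^{i,N}-M^i]_\tau+|V^{i,N}-V^i|_\tau^2\Big).
\end{equation*}
Using the Lipschitz hypotheses $(\mathbf A_2^2)$--$(\mathbf A_3^2)$ on $b,\sigma,\beta$, the right-hand side is bounded by a constant times
\begin{equation*}
\mathbb E\Big(\int_0^\tau |X^{i,N}_s-X^i_s|^2\,\mathrm ds+\int_0^\tau W_2\big(\mu^N_s,\mathcal L_{X^i_s}\big)^2\,\mathrm ds\Big),
\end{equation*}
plus the analogous term with $\mu^N_{s^-}$ and $\mathcal L_{X^i_{s^-}}$, which is handled identically.

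The second step is to split the Wasserstein term via the triangle inequality into $W_2(\mu^N_s,\nu^N_s)^2$ and $W_2(\nu^N_s,\mathcal L_{X^i_s})^2$, where $\nu^N_s:=\tfrac1N\sum_{j=1}^N\delta_{X^j_s}$ is the empirical measure of the \emph{independent} copies. Using the coupling $\tfrac1N\sum_j\delta_{(X^{j,N}_s,X^j_s)}$ one bounds $W_2(\mu^N_s,\nu^N_s)^2\le \tfrac1N\sum_{j=1}^N|X^{j,N}_s-X^j_s|^2$; since the law of $(X^{j,N},X^j)$ does not depend on $j$ by symmetry, taking expectations and then the maximum over $i$, setting $u_N(t):=\max_{1\le i\le N}\mathbb E(\sup_{0\le s\le t}|X^{i,N}_s-X^i_s|^2)$, yields
\begin{equation*}
u_N(t)\le C\int_0^t u_N(s)\,\mathrm ds+C\int_0^T\mathbb E\Big(W_2\big(\nu^N_s,\mathcal L_{X^i_s}\big)^2\Big)\mathrm ds.
\end{equation*}
The term $\mathbb E(W_2(\nu^N_s,\mathcal L_{X^i_s})^2)$ is the empirical-measure approximation error for i.i.d.\ samples from $\mathcal L_{X^i_s}$; by the quantitative law of large numbers in Wasserstein distance (Fournier--Guillin), together with the uniform moment bound $\sup_{s\le T}\mathbb E|X^i_s|^p<\infty$ for $p\ge 4$ coming from $X^i\in\mathcal S^p_{0,T}$ (which holds under $(\mathbf A_1^p)$--$(\mathbf A_3^p)$ exactly as in Theorem \ref{theorem_1}), this converges to $0$ uniformly in $s$, and in fact $\int_0^T\mathbb E(W_2(\nu^N_s,\mathcal L_{X^i_s})^2)\mathrm ds =: \varepsilon_N\to 0$ as $N\to\infty$. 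Here I would invoke dominated convergence, using that $W_2(\nu^N_s,\mathcal L_{X^i_s})^2\le 2(\tfrac1N\sum_j|X^j_s|^2+\mathbb E|X^i_s|^2)$ has a uniformly integrable (indeed $L^{p/2}$-bounded) majorant.

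The final step is Grönwall's lemma: from $u_N(t)\le C\varepsilon_N+C\int_0^t u_N(s)\,\mathrm ds$ we get $u_N(T)\le C\varepsilon_N e^{CT}\to 0$, which is exactly the claimed limit. The main obstacle is the quantitative control of the empirical-measure error $\mathbb E(W_2(\nu^N_s,\mathcal L_{X^i_s})^2)$ uniformly in $s\in[0,T]$ and its integrability in $N$; this is precisely why the stronger integrability exponent $p\ge 4$ is assumed (Fournier--Guillin requires moments strictly beyond order $2$ in dimension $d\ge 3$, and the $L^{p/2}$ bound with $p\ge4$ secures the uniform integrability needed to pass to the limit). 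A secondary technical point is to make sure the semimartingale decompositions of $Y^{i,N}$ and $Y^i$ legitimately feed into Proposition \ref{estim_quad} — i.e.\ that the drift parts have finite $2$nd-moment total variation and the martingale parts are in $\mathcal S^2_{0,T}$ — which follows from $(\mathbf A_2^2)$--$(\mathbf A_3^2)$ and the already-established bounds $X^{i,N},X^i\in\mathcal S^2_{0,T}$.
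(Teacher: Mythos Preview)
There is a genuine gap at the very first step. You assert that $(X^{i,N},K^N)$ solves $RP_{\mathcal D}(\phi,Y^{i,N})$ in the sense of Definition~\ref{sk_det}, and then feed it into Proposition~\ref{estim_quad}. But it does not: in Definition~\ref{sk_det} the compensator $k$ is \emph{deterministic} and the Skorokhod condition reads $\int_0^t\langle \mathbb E(\phi_s(X_s))-z,\mathrm dk_s\rangle\le 0$, whereas in the particle system \textbf{(S)} the compensator $K^N$ is an adapted \emph{random} process and the constraint is $\int_0^t\langle \tfrac1N\sum_{j}\phi_s(X^{j,N}_s)-z,\mathrm dK^N_s\rangle\le 0$, with the empirical average in place of the expectation. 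Proposition~\ref{estim_quad} is therefore not applicable to the pair $(X^{i,N},K^N)$; its proof uses the determinism of $k,\tilde k$ in an essential way (to pull the expectation inside the stochastic integral against $\mathrm dA_s-\mathrm d\tilde A_s$), and that step collapses when $K^N$ is random. This is exactly the ``additional computational complexity'' flagged in the introduction: in mean reflection the particle constraint is a different object from the limiting constraint, so one cannot compare the two reflection problems by a black-box stability estimate.

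The paper circumvents this by passing through the bijection with the time-dependent Skorokhod problem $RP_D$: it forms the averaged processes $\tilde X^N=l^*(\tfrac1N\sum_j X^{j,N})$ and $\hat X^N=l^*(\tfrac1N\sum_j X^{j})$, observes that $(\tilde X^N,\tilde K^N)$ and $(\mathbb E(\hat X^N),\hat k)$ both solve genuine $RP_D$ problems, and then bounds $|K^N-k|$ via $|\tilde X^N-\hat X^N|$ plus the fluctuation $|\hat X^N-\mathbb E(\hat X^N)|$. The first piece is handled by It\^o's formula together with the reflection inequality (the terms $I_4,I_5$), and the second by a partition-in-time argument and the law of large numbers, using the uniform $4$th-moment bound from Step~1. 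Your Gr\"onwall/Fournier--Guillin structure for the Wasserstein remainder is fine and reappears in the paper, but it only kicks in after this extra layer; without it, the comparison you write down has no foundation.
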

	\begin{proof}
		Note that $C$ denotes a constant that may vary from line to line and it is independent of $N$. Define $$\mu^N= \frac{1}{N}\sum_{j=1}^N\delta_{X^{j,N}},\quad \text{and} \quad\mu= \frac{1}{N}\sum_{j=1}^N\delta_{X^{j}}.$$
		
		Let $D$ be the family of time-dependent sets in $\mathbb{R}^d$ defined by $D=\{\Phi^{-1}_t(\mathcal{D}),\;t\in[0,T]\}$. We can see that if $(X^{i,N},K^N)$ and $(X^{i},k)$ are solutions to $\textbf{\upshape (S)}$ and $RP_{\mathcal{D}}(\phi,Y^i)$, respectively. Then $\left(l^*\left( \frac{1}{N}\sum_{j=1}^NX^{j,N}\right), l^*(K^N)\right)=RP_{D}\left(l^*\left( \frac{1}{N}\sum_{j=1}^NY^{j,N}\right)\right)$  and $\left(l^*(\mathbb{E}(X^j)), l^*(k)\right)=RP_{D}\left(l^* (\mathbb{E}(Y^{j})\right)$,
		where 
		\begin{equation*}
			Y^{i,N}:=X_{0}^i+ \int_{0}^{t} b\left(s, X_s^{i,N},\mu^N_s\right) \mathrm{d}s+ \int_{0}^{t} \sigma\left(s, X_s^{i,N},\mu^N_s\right) \mathrm{d} B_s^i+\int_{0}^{t}\int_{\mathbb{R}^d\backslash\{0\}}\beta\left(s,  X_{s^-}^{i,N}, \mu^N_{s^-},z\right)  \tilde{N}^i(\mathrm{d} s, \mathrm{d} z)
		\end{equation*} 
		We set
		$$\tilde{X}^N= l^*\left( \frac{1}{N}\sum_{j=1}^NX^{j,N}\right),\quad \tilde{K}^N= l^*(K^N),\quad \hat{X}^N= l^*\left( \frac{1}{N}\sum_{j=1}^NX^{j}\right), \;  \hat{k}= l^*(k),$$
		
		$$\tilde{Y}^N= l^*\left( \frac{1}{N}\sum_{j=1}^NY^{j,N}\right), \;\text{and}\; \hat{Y}^N= l^*\left( \frac{1}{N}\sum_{j=1}^NY^{j}\right)$$
		\\
		\textbf{Step 1:} We show that there exists a constant $C$, independent of $N$, such that
		\begin{equation*}
			\mathbb{E}\left(\sup_{s\in [0,T]}|X_s^{j,N}|^4+ \sup_{s\in [0,T]}|X_s^{j}|^4+ |K^N|_T^2\right)\leq C.
		\end{equation*}
		We have \begin{align}
			\mathbb{E}\left(\sup_{s\in [0,T]}|X^{i,N}_t|^4\right)&\leq 6\mathbb{E}\left( \sup_{s\in [0,T]}|Y^{i,N}_t|^4+ \sup_{s\in [0,T]}|K^{N}_t|^4 \right)\nonumber\\
			&\leq 6\mathbb{E}\left( \sup_{s\in [0,T]}|Y^{i,N}_t|^4+\|(l^*)^{-1}\| \sup_{s\in [0,T]}|\tilde{K}^{N}_t|^4 \right)   \label{i_1}
		\end{align}
		by Proposition \ref{estim_quad_Ap} in the appendix, and using the exchangeability of $(X^{i,N})_{\{i=1,\cdots,N\}}$, along with the assumptions $\left(\mathbf{A_1^p}\right)$-$\left(\mathbf{A_3^p}\right)$, we obtain
		
		\begin{align*}
			\mathbb{E}\left( \sup_{s\in [0,T]}|\tilde{K}^{N}_t|^4\right)&\leq C \mathbb{E}\left(|X_0^i|^4+\int_{0}^{T} (1+\sup_{0 \leq s \leq u}|X_u^i|^4) \mathrm{d} s+\int_{0}^{T} |b(s,0,\mu_s)|^4\mathrm{d}s+\int_{0}^{T} |\sigma(s,0,\mu_s)|^4\mathrm{d}s\right)
		\end{align*}
		we can estimate $ \mathbb{E}\left( \sup_{s\in [0,T]}|Y^{i,N}_t|^4\right)$ by a similar bound, using Burkholder--Davis--Gundy inequality. Combining this with \eqref{i_1} and applying Gronwall's lemma, we obtain that $\mathbb{E}\left(\sup_{t\in [0,T]}|X^{i,N}_t|^4\right)$ is uniformly bounded. 
		Moreover, 
		\begin{align*}
			\mathbb{E}\left(|K^{N}|_T^2\right)\leq  \|(l^*)^{-1}\|\mathbb{E}\left(|\tilde{K}^{N}|_T^2\right)
		\end{align*}
		using the fact that $\tilde{Y}=\tilde{X}-\tilde{K}$ and the continuity of $l^*$, we apply the estimate \eqref{estim_k_} of Theorem \ref{prop_A} in the appendix, with $A_t=\Phi_{t}^{-1}(\alpha)$ for some $\alpha\in\mathring{\mathcal{D}}$, and obtain
		\begin{equation*}
			\mathbb{E}\left(|K^{N}|_T^2\right)\leq C \mathbb{E}\left(\sup_{s\in [0,T]}|X_s^{j,N}|^4+ \sup_{s\in [0,T]}|K_s^{N}|^4+ \sup_{s\in [0,T]}|v_t|^4+1\right).
		\end{equation*}
		which is uniformly bounded in $N$ by the preceding estimates.
		
		Finally, using Proposition \ref{estim_quad} and applying similar arguments as above, we also conclude that $\mathbb{E}\left(\sup_{t\in [0,T]}|X^{i}_t|^4\right)$ is finite.
		\\
		\textbf{Step 2:} Now, we show $$\lim _{N \rightarrow \infty} \max _{1 \leq i \leq N} \mathbb{E}\left( \sup _{0 \leq t \leq T}|X_t^{i, N}-X_t^i|^2\right)=0.$$
		Let $i\in \{1,\cdots,N\}$. We have
		\begin{align*}
			\mathbb{E}\left(\sup_{0 \leq t \leq T}|X^{i,N}_t-X_t^i|^2\right)\leq 2\left( \mathbb{E}(\sup_{0 \leq t \leq T}|Y^{i,N}_t-Y_t^i|^2)+ \mathbb{E}(\sup_{0 \leq t \leq T}|K^N_t-k_t|^2)\right)
		\end{align*}
		For the second term, we estimate
		\begin{align*}
			\mathbb{E}\left(\sup_{0 \leq t \leq T}|K^N_t-k_t|^2\right)&\leq \|(l^*)^{-1}\|\mathbb{E}\left(\sup_{0 \leq t \leq T}|\tilde{K}^N_t-\hat{k}_t)|^2\right)\\
			&\leq C \left(\mathbb{E}\left(\sup_{0 \leq t \leq T}|\tilde{X}^N_t-\hat{X}^N_t|^2+\sup_{0 \leq t \leq T}|\hat{X}^N_t-\mathbb{E}(\hat{X}^N_t)|^2\right) \right) \\
			&+ C\left(\mathbb{E}\left(\sup_{0 \leq t \leq T}|\tilde{Y}^N_t-\hat{Y}^N_t|^2+\sup_{0 \leq t \leq T}|\hat{Y}^N_t-\mathbb{E}(\hat{Y}^N_t)|^2\right) \right).
		\end{align*}
		In the last line, we used the fact that $(\tilde{X}^N,\tilde{K}^N)=RP_D(\tilde{Y}^N)$ and $(\mathbb{E}(\hat{X}^N_t),\hat{k})=RP_D(\mathbb{E}(\hat{Y}^N))$, and we added and subtracted $\hat{X}^N$. Since $k$ is deterministic and $(Y^{i,N},Y^i)_{\{i=1,\cdots,N\}}$ are exchangeable, it follows that
		\begin{equation}
			\mathbb{E}\left(\sup_{0 \leq t \leq T}|X^{i,N}_t-X_t^i|^2\right)\leq C\mathbb{E}\left(\sup_{0 \leq t \leq T}|\tilde{X}^N_t-\hat{X}^N_t|^2+\sup_{0 \leq t \leq T}|\hat{X}^N_t-\mathbb{E}(\hat{X}^N_t)|^2+\sup_{0 \leq t \leq T}|Y^{i,N}_t-Y_t^i|^2\right). \label{eq0}
		\end{equation}
		
		Applying It\^o's formula to $\tilde{X}^N-\hat{X}^N$, we obtain
		\begin{align*}
			|\tilde{X}^N_t-\hat{X}^N_t|^2&\leq \frac{2}{N}\sum_{j=1}^N\int_0^t \langle l(\tilde{X}_s^N-\hat{X}^N_s), b(s,X_s^{j,N},\mu_s^N)-b(s,X_s^{j},\mathcal{L}_{X_s^j})\rangle\mathrm{d}s\\
			&+ \frac{2}{N}\sum_{j=1}^N\int_0^t \langle l(\tilde{X}_s^N-\hat{X}^N_s), \sigma(s,X_s^{j,N},\mu_s^N)-\sigma(s,X_s^{j},\mathcal{L}_{X_s^j})\mathrm{d}B_s\rangle\\
			&+ \frac{2}{N}\sum_{j=1}^N\int_0^t\int_{\mathbb{R}^d\backslash{\{0\}}} \langle l(\tilde{X}_{s^-}^N-\hat{X}^N_{s^-}), \beta(s,X_{s^-}^{j,N},\mu_{s^-}^N)-\beta(s,X_{s^-}^{j},\mathcal{L}_{X^j_{s^-}})\rangle\tilde{N}^j(\mathrm{d}s,\mathrm{d}z)\\
			&+2\int_0^t \langle \tilde{X}^N_s-\hat{X}^N_s, \mathrm{d}\tilde{K}^N_s\rangle+2\int_0^t \langle\hat{X}^N_s- \tilde{X}^N_s, \mathrm{d}\hat{k}_s\rangle\\
			&+\frac{1}{N}\sum_{j=1}^N\int_0^t\int_{\mathbb{R}^d\backslash{\{0\}}} | l(\beta(s,X_{s^-}^{j,N},\mu_{s^-}^N)-\beta(s,X_{s^-}^{j},\mathcal{L}_{X_{s^-}^j}))|^2N^j(\mathrm{d}s,\mathrm{d}z)\\
			&=I_1+I_2+I_3+I_4+I_5+I_6.
		\end{align*}
		We have
		\begin{align*}
			I_4\leq 2\int_0^t \langle \tilde{X}^N_s- \mathbb{E}(\hat{X}^N_s), \mathrm{d}\tilde{K}^N_s\rangle + 2\int_0^t \langle \mathbb{E}(\hat{X}^N_s)- \hat{X}^N_s, \mathrm{d}\tilde{K}^N_s\rangle
		\end{align*}
		since $(\mathbb{E}(\hat{X}^N),\hat{k})=RP_D(\mathbb{E}(\hat{Y}^N))$, $\mathbb{E}(\hat{X}^N_s)\in D_s$. Moreover, since $(\tilde{X}^N,\tilde{K}^N)=RP_D(\tilde{Y}^N)$, it follows that
		\begin{equation*}
			I_4\leq 2\int_0^t \langle \mathbb{E}(\hat{X}^N_s)- \hat{X}^N_s, \mathrm{d}\tilde{K}^N_s\rangle \leq2\sup_{s\in[0,T]}|\mathbb{E}(\hat{X}^N_s)- \hat{X}^N_s||\tilde{K}^N|_T
		\end{equation*}
		Similarly, we obtain
		\begin{equation*}
			I_5\leq2\sup_{s\in[0,T]}|\mathbb{E}(\hat{X}^N_s)- \hat{X}^N_s||\hat{k}|_T.
		\end{equation*}
		Now, taking the supremum and the expectation, and applying Burkholder--Davis--Gundy inequality, the Lipchitz property of the coefficients, and by  applying the triangle inequality for the Wasserstein distance with respect to the measure $\mu$, and the exchangeability of $(X^{i,N},X^i)$, we obtain
		\begin{equation}
			\mathbb{E}(I_1+I_2+I_3+I_6)\leq \frac{1}{2}\mathbb{E}(\sup_{t\in[0,T]}|X_t^{i,N}-X_t^{i}|^2)+ C \mathbb{E}\left(\int_0^T  \sup_{r\in[0,s]}|X_r^{i,N}-X_r^{i}|^2+ W_2(\mu_s,\mathcal{L}_{X_s^i})\mathrm{d}s\right).\label{eq1}
		\end{equation}
		Let $(t_m)_{m=1,\dots,M}$ be a uniform partition of the interval $[0,T]$, with $t_m=\frac{mT}{M}$, for $m=1,\cdots,M$. Then
		\begin{align}
			\mathbb{E}(I_5+I_6) &\leq \mathbb{E}\left(\sup_{s\in[0,T]}|\mathbb{E}(\hat{X}^N_s)- \hat{X}^N_s|^2\right)^{\frac{1}{2}}\left(\mathbb{E}(|K^N|_T^2)^{\frac{1}{2}}+|k|_T\right)\nonumber\\
			&\leq C\max_{m=1,\cdots,M}\mathbb{E}\left(|\mathbb{E}(\hat{X}^N_{t_m})- \hat{X}^N_{t_m}|^2\right)^{\frac{1}{2}}\nonumber\\
			&+ C\max_{m=1,\cdots,M}\mathbb{E}\left(\sup_{s\in[t_m,t_{m+1}[}|\hat{X}^N_{t_m}-\hat{X}^N_{s}|^2\right)^{\frac{1}{2}}\nonumber\\
			&\leq  C\left(\max_{m=1,\cdots,M}\mathbb{E}\left(|\mathbb{E}(\hat{X}^N_{t_m})- \hat{X}^N_{t_m}|^2\right)^{\frac{1}{2}}+ c\left(\frac{1}{M}\right)\right).
			\label{eq3}
		\end{align}
		Here, the constant $C$ is depending on the constant in Step 1, and $c(\cdot)$ is the function from Proposition \ref{estim-VAR},
		Morover, the term $\mathbb{E}(\sup_{0 \leq t \leq T}|Y^{i,N}_t-Y_t^i|^2)$ can be estimated similarly to \eqref{eq1}.
		Combining the estimates \eqref{eq1} \eqref{eq3}, and \eqref{eq0}, and applying Gronwall's Lemma, we obtain
		\begin{align}
			\max _{1 \leq i \leq N}\mathbb{E}(\sup_{t\in[0,T]}|X_t^{i,N}-X_t^{i}|^2)&\leq C \max _{1 \leq i \leq N}\mathbb{E}\left(\int_0^T W_2(\mu_s,\mathcal{L}_{X_s^i})\mathrm{d}s\right)\nonumber\\
			&+ C\left( \max_{m=1,\cdots,M}\mathbb{E}\left(|\mathbb{E}(\hat{X}^N_{t_m})- \hat{X}^N_{t_m}|^2\right)^{\frac{1}{2}}+ c\left(\frac{1}{M}\right)\right). \label{eq00}
		\end{align}
		We have $X^i\in\mathcal{S}^2_{0,T}$. Then, the convergence of the first term on the right-hand side of \eqref{eq00} follows from the discussion on page $361$ of \cite{Delarue}.\\
		By the strong law of large numbers, we have $\lim\limits_{N}|\mathbb{E}(\hat{X}^N_{t_m})- \hat{X}^N_{t_m}|^2=0$ $\mathbb{P}$-a.s. Moreover, by Step 1, the family $\{|\hat{X}_{t_m}^N|^2,\;m\in\{1,\dots,M\}\}$ is uniformly integrable, which implies $$\lim\limits_{N}\mathbb{E}\left(|\mathbb{E}(\hat{X}^N_{t_m})- \hat{X}^N_{t_m}|^2\right)=0.$$
		In \eqref{eq00}, we first let $N\to\infty$, then $M\to\infty$. Using the fact that the function $c(\cdot)$ is continuous at $0$, we conclude the result.
	\end{proof}
	\section{Optimal control problem for McKean SDEs with mean reflection}\label{control}
	In this section, we study optimal control problems for McKean–Vlasov SDEs with mean reflection.
	We consider a Radon measure $\lambda$ on $\mathbb{R}^d \setminus {\{0\}}$ that satisfies the Lévy condition and whose support is contained in a compact set $\Gamma$, and let $x\in\mathcal{D}$.
	\subsection{Formulation of the problem}
	We define the notion of control as the following.
		\begin{definition}\label{def of Strict Control}
			We call an admissible system control a set $\alpha = (\Omega,\mathcal{F},\mathbb{F},\mathbb{P}, B, \tilde{N}, u, X, k)$ such that
			\begin{itemize}
				\item[i)]  $(\Omega,\mathcal{F},\mathbb{P})$ is a probability space equipped with a filtration $\mathbb{F}=(\mathcal{F}_t)_{t\geq 0}$;
				\item[ii)] $u$ is a $\mathbb{A}$-valued process, $\mathbb{F}$-progressively measurable;
				\item[iii)]  $ B$  is a standard Brownian motion and $\tilde{N}$ is a compensated Poisson random measure with Lévy measure $\lambda$, both defined on the filtered probability space $(\Omega, \mathcal{F}, \mathbb{F}, \mathbb{P})$.
				\item[iv)]$k$ is a deterministic mapping in $\mathbb{C}([0,T], \mathbb{R}^d)$, and $X$ is an $\mathbb{F}$-adapted process in $\mathbb{D}([0,T], \mathbb{R}^d)$ satisfy the following;
				\begin{equation}\label{eqdef}
					\begin{cases}
						X_t= x+ \displaystyle\int_{0}^{t} b(s, X_s,\mathcal{L}_{X_s}, u_s)\mathrm{d}s + \displaystyle\int_{0}^{t}\sigma(s,X_s, \mathcal{L}_{X_s}) \mathrm{d}B_s  \\
						\quad\quad+\displaystyle\int_{0}^{t}\int_{\mathbb{R}^d\backslash\{0\}}\beta\left(s,X_{s^-}, \mathcal{L}_{X_{s^-}},z\right)  \tilde{N}(\mathrm{d} s, \mathrm{d} z)+ k_t,\\
						\mathbb{E}^{\mathbb{P}}(X_t) \in \mathcal{D}, \\
						\text{for all}\;z\in \mathcal{D}, \displaystyle\int_0^t \langle \mathbb{E}^{\mathbb{P}}(X_{s}) - z,  \mathrm{d}k_s \rangle \leq 0 . 
					\end{cases}
				\end{equation}
			\end{itemize}
			We denote by $\mathcal{U}_{ad}$ the set of all admissible controls.
		\end{definition}
		The cost functional corresponding to a control $u \in \mathcal{U}_{ad}$ is defined as the following:
		\begin{equation*}
			J(u) = \mathbb{E}^{\mathbb{P}}\left(\int_0^T  f(s, X_s, \mathcal{L}_{X_s}, u_s)\mathrm{d}s + \int_0^T h(s,X_s,\mathcal{L}_{X_s})\mathrm{d}|k|_s + g(X_T)\right).
		\end{equation*} 
		The goal is to minimize the cost functional $J$ over the set of admissible controls \( \mathcal{U}_{ad} \); that is, to find an optimal control $ u^* \in \mathcal{U}_{ad} $ such that
		$$
		J(u^*) = \min_{u \in \mathcal{U}_{ad}} J(u).
		$$
		
		We consider the following set of assumptions $(\mathbf{A})$:
		\begin{itemize}
			\item[$\left(\mathbf{A_1}\right)$] The mappings $b:[0, T] \times \mathbb{R}^d \times \mathcal{P}\left(\mathbb{R}^d\right)\times \mathbb{A} \rightarrow \mathbb{R}^d$, $\sigma:[0, T] \times \mathbb{R}^d \times\mathcal{P}\left(\mathbb{R}^d\right)\rightarrow \mathbb{R}^m$, and $\beta:[0, T]\times\mathbb{R}^d \times\mathcal{P}\left(\mathbb{R}^d\right)\times \mathbb{R}^{d}\backslash{\{0\}} \rightarrow \mathbb{R}^d$ are continuous. Moreover, there exists $\gamma>0$ such that for all $t\in [0,T]$, $a\in\mathbb{A}$, $x, x^{\prime}\in\mathbb{R}^d$, and $\mu,\mu^{\prime}\in\mathcal{P}(\mathbb{R}^d)$, the following hold:
				\begin{multline*}
					\left|b(t,x,\mu,a)-b(t,x^{\prime},\mu^{\prime},a)\right|+\left|\sigma(t,x,\mu)-\sigma(t,x^{\prime},\mu^{\prime})\right|+ \left|\beta(t,x,\mu,z)-\beta(t,x^{\prime},\mu^{\prime},z)\right|\\\leq \gamma\left(|x-x^{\prime}|+W_2(\mu, \mu^{\prime})\right).\end{multline*}
				$$ |b(t,x,\mu,a)|+|\sigma(t,x,\mu)|+|\beta(t,x,\mu, z)|\leq \gamma(1+|x|). $$
				\item[$\left(\mathbf{A_2}\right)$] The mappings
				\begin{equation*}
					f :[0, T] \times \mathbb{R}^d \times \mathcal{P}\left(\mathbb{R}^d\right) \times \mathbb{A} \to \mathbb{R}, \quad h :[0, T]\times \mathbb{R}^d \times \mathcal{P}\left(\mathbb{R}^d\right) \to \mathbb{R}, \quad g : \mathbb{R}^d \to \mathbb{R},
				\end{equation*}
				are continuous and exhibit linear growth in $x$, uniformly with respect to $(t,\mu,a)$. Additionally, $f$ and $h$ are Lipschitz continuous, uniformly in $(t, a)$.
		\end{itemize}
		\begin{remark}
			\begin{itemize}
				\item 
				Using an argument similar to that of Theorem \ref{Main_th}, we conclude that $\mathcal{U}_{ad}$ is nonempty. Indeed,
				let $ (\Omega, \mathcal{F}, \mathbb{F}, \mathbb{P})$ be a filtered probability space. Let $B$ be a standard Brownian motion and $ \tilde{N}$ a compensated Poisson random measure, both defined on this space. Under Assumption $\left(\mathbf{A_1}\right)$, Theorem \ref{Main_th} ensures that for any fixed constant control $u_0 \in \mathcal{U}$, the following system:
				\[
				\begin{cases}
					X_t = x + \displaystyle\int_{0}^{t} b(s, X_s, \mathcal{L}_{X_s}, u_0) \, \mathrm{d}s + \displaystyle\int_{0}^{t} \sigma(s, X_s, \mathcal{L}_{X_s}) \, \mathrm{d}B_s \\
					\quad\quad + \displaystyle\int_{0}^{t} \int_{\Gamma} \beta(s, X_{s^-}, \mathcal{L}_{X_{s^-}}, z) \, \tilde{N}(\mathrm{d}s, \mathrm{d}z) + k_t, \\
					\mathbb{E}^{\mathbb{P}}(X_t) \in \mathcal{D}, \\
					\text{for all } z \in \mathcal{D}, \quad \displaystyle\int_0^t \langle \mathbb{E}^{\mathbb{P}}(X_s) - z, \, \mathrm{d}k_s \rangle \leq 0,
				\end{cases}
				\]
				admits a unique strong solution \( (X^{u_0}, k^{u_0}) \).
				
				Now, define a constant control process \( u_t := u_0 \) for all \( t \in [0, T] \). Then it follows that the tuple \( (\Omega, \mathcal{F}, \mathbb{F}, \mathbb{P}, B, \tilde{N}, u, X^{u_0}, k^{u_0}) \) belongs to the set of admissible controls $ \mathcal{U}_{ad}$.
				
				\item Note that under assumption $\left(\mathbf{A_2}\right)$, the mapping $J$ is well-define.
			\end{itemize}
			
		\end{remark}
		
		We now extend Definition~\ref{def of Strict Control} to the framework of relaxed controls.
		\begin{definition}\label{def of relaxed control}
			A tuple \(\gamma = (\Omega, \mathcal{F}, \mathbb{F}, \mathbb{P}, B, \tilde{N}, q, X, k)\) is called a \emph{relaxed control} if it satisfies Conditions $i)$, and $iii)$ of Definition~\ref{def of Strict Control}, and the following:
			
			\begin{itemize}
				\item[iii)] $q$ is $\mathcal{V}$-valued random variable such that for each $t\in[0,T]$, $\mathds{1}_{]0,t]}q$ is $\mathcal{F}_t$- measurable;
				\item[iv)] $k$ is a deterministic mapping in $\mathbb{C}([0,T], \mathbb{R}^d)$, and $X$ is an $\mathbb{F}$-adapted process in $\mathbb{D}([0,T], \mathbb{R}^d)$ satisfy the following:
				$$\left\{\begin{aligned}
					&X_t= x+ \int_{0}^{t}\int_{\mathbb{A}} b(s, X_s,\mathcal{L}_{X_s},a)q_s(\mathrm{d}a)\mathrm{d}s + \int_{0}^{t}\sigma(s,X_s, \mathcal{L}_{X_s}) \mathrm{d}B_s  \\
					& \quad\quad+\int_{0}^{t}\int_{\mathbb{R}^d\backslash\{0\}}\beta\left(s,  X_{s^-}, \mathcal{L}_{X_{s^-}},z\right)  \tilde{N}(\mathrm{d} s, \mathrm{d} z)+ k_t,\\
					&
					\mathbb{E}^{\mathbb{P}}(X_t) \in \mathcal{D}, \\
					& \text{for all}\;z\in \mathcal{D}, \int_0^t \langle \mathbb{E}^{\mathbb{P}}(X_{s}) - z,  \mathrm{d}k_s \rangle \leq 0 .
				\end{aligned}
				\right.
				$$
			\end{itemize}
			We denote by \(\mathcal{R}\) the set of all relaxed controls.
		\end{definition}
		\begin{remark}
			Under Assumption $\left(\mathbf{A_1}\right)$, the set \(\mathcal{R}\) of relaxed controls is nonempty. Indeed, by taking \(q_t := \delta_{a_0}\) for some fixed \(a_0 \in \mathbb{A}\), and applying Theorem~\ref{Main_th}, the existence of such a relaxed control follows directly.
		\end{remark}
		
		The cost functional corresponding to a relaxed control $\gamma \in \mathcal{R}$ is defined as follows:
		\begin{equation*}
			J(\gamma) := \mathbb{E}^{\mathbb{P}}\left(\int_0^T \int_{\mathbb{A}} f(s, X_s, \mathcal{L}_{X_s}, a) q_s(\mathrm{d}a)\mathrm{d}s + \int_0^T h(s,X_s,\mathcal{L}_{X_s})\mathrm{d}|k|_s + g(X_T)\right).
		\end{equation*} 
		\subsection{Existence of optimal controls}\label{sec2}
		\begin{theorem} \label{theooptimal}
			Under assumptions $\left(\mathbf{A}\right)$, the relaxed control problem admits an optimal solution. Furthermore, if the following condition is also satisfied:
			\begin{description}
				\item[ (\textit{Roxin’s Condition}):] For every $ (t,x,\mu) \in [0, T] \times \mathbb{R}^d\times\mathcal{P} $, the set 
				\begin{equation*}
					(b, f)(t, x, \mu,\mathbb{A}) := \left\{\big(b(t, x,\mu, a), f(t, x,\mu, a)\big) : a \in \mathbb{A} \right\}
				\end{equation*}
				is convex and closed in $ \mathbb{R}^{d+1}$.
			\end{description}
			Then the relaxed control problem admits a strict optimal control.
		\end{theorem}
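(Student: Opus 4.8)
The strategy is the classical compactification method adapted to the mean-reflected McKean--Vlasov setting. I will show that any minimizing sequence of relaxed controls is tight in a suitable path space, extract a convergent subsequence, and pass to the limit in the state equation, the reflection conditions and the cost functional, thereby obtaining an optimal relaxed control; the strict-control conclusion then follows from a measurable-selection argument under Roxin's condition.

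\textbf{Step 1: tightness of a minimizing sequence.} Let $(\gamma^n)_{n\geq 1}=(\Omega^n,\mathcal{F}^n,\mathbb{F}^n,\mathbb{P}^n,B^n,\tilde N^n,q^n,X^n,k^n)$ be a sequence in $\mathcal{R}$ with $J(\gamma^n)\to\inf_{\gamma\in\mathcal{R}}J(\gamma)$. Using the linear-growth bound in $(\mathbf{A_1})$, Burkholder--Davis--Gundy, the a priori estimate \eqref{estim_var_k} on $|k^n|_T$, and Gronwall, I first establish a uniform moment bound $\sup_n\mathbb{E}^{\mathbb{P}^n}(\sup_{t\le T}|X^n_t|^p)<\infty$ together with a uniform bound on $\sup_n|k^n|_T$ (note $k^n$ is deterministic, controlled through $\sup_s|\mathbb{E}^{\mathbb{P}^n}(X^n_s)|$). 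Then I prove tightness of the laws of $(X^n,B^n,M^n,q^n,k^n)$ on $\mathbb{D}([0,T],\mathbb{R}^d)\times\mathcal{C}([0,T],\mathbb{R}^m)\times\mathbb{D}([0,T],\mathbb{R}^d)\times\mathcal{V}\times\mathcal{C}([0,T],\mathbb{R}^d)$, where $M^n$ is the jump martingale part: tightness of $q^n$ is automatic since $\mathcal{V}$ is compact for the stable topology (as $\mathbb{A}$ is compact); tightness of $B^n$ is trivial; tightness of $X^n$ and $M^n$ follows from Aldous' criterion using the linear-growth estimates and Corollary \ref{cor_1}-type increment bounds; tightness of $k^n$ (equicontinuity) uses that $\mathrm{d}k$ is absolutely continuous with a density controlled by the estimates of Section \ref{sk}.

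\textbf{Step 2: passing to the limit.} By Prokhorov and Skorokhod's representation theorem, along a subsequence there is a probability space carrying copies $(\bar X^n,\bar B^n,\bar M^n,\bar q^n,\bar k^n)$ converging a.s. to $(\bar X,\bar B,\bar M,\bar q,\bar k)$. The limiting filtration is generated by these processes. I then identify $\bar B$ as a Brownian motion and recover a compensated Poisson random measure from $\bar M$ via its jumps (the support of $\lambda$ being compact in $\Gamma$ makes the jump-measure reconstruction clean). The delicate points are: (a) showing $\int_0^\cdot\int_{\mathbb{A}}b(s,\bar X^n_s,\mathcal{L}_{\bar X^n_s},a)\,\bar q^n_s(\mathrm{d}a)\,\mathrm{d}s\to\int_0^\cdot\int_{\mathbb{A}}b(s,\bar X_s,\mathcal{L}_{\bar X_s},a)\,\bar q_s(\mathrm{d}a)\,\mathrm{d}s$, which combines the stable convergence of $\bar q^n$ with the a.s.\ uniform convergence of $\bar X^n$ and the continuity/Lipschitz assumptions in $(\mathbf{A})$ (here one also uses that $\mathcal{L}_{\bar X^n_\cdot}\to\mathcal{L}_{\bar X_\cdot}$ in $W_2$); (b) passing to the limit in the stochastic integrals against $\bar B^n$ and $\bar M^n$, done by the standard martingale-problem / uniform-integrability argument. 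For the reflection part, $\mathbb{E}^{\mathbb{P}}(X^n_t)\in\mathcal{D}$ passes to the limit because $\mathcal{D}$ is closed and the convergence gives $\mathbb{E}(\bar X^n_t)\to\mathbb{E}(\bar X_t)$; the Skorokhod minimality inequality $\int_0^t\langle\mathbb{E}(\bar X^n_s)-z,\mathrm{d}\bar k^n_s\rangle\le0$ passes to the limit by the uniform convergence of $\bar k^n$ in $\mathcal{C}([0,T],\mathbb{R}^d)$ and of $s\mapsto\mathbb{E}(\bar X^n_s)$, exploiting the bijection with the time-dependent Skorokhod problem of Section \ref{sk} to control the total variation. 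This produces $\bar\gamma=(\bar\Omega,\bar{\mathcal{F}},\bar{\mathbb{F}},\bar{\mathbb{P}},\bar B,\bar{\tilde N},\bar q,\bar X,\bar k)\in\mathcal{R}$, and lower semicontinuity of the cost (continuity of $f,h,g$ plus Fatou and the convergence $\mathrm{d}|\bar k^n|\to\mathrm{d}|\bar k|$ weakly, again via Section \ref{sk}) gives $J(\bar\gamma)\le\liminf_n J(\gamma^n)=\inf_{\gamma}J(\gamma)$, so $\bar\gamma$ is optimal.

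\textbf{Step 3: strictness under Roxin's condition.} Given the optimal relaxed control $\bar\gamma$, consider the $\mathbb{R}^{d+1}$-valued process $\int_{\mathbb{A}}(b,f)(t,\bar X_t,\mathcal{L}_{\bar X_t},a)\,\bar q_t(\mathrm{d}a)$. By Roxin's condition this barycenter lies in the closed convex set $(b,f)(t,\bar X_t,\mathcal{L}_{\bar X_t},\mathbb{A})$ for $\mathrm{d}t$-a.e.\ $t$, so by a measurable selection theorem (Filippov's implicit-function lemma / the theory of von Neumann--Aumann) there exists a progressively measurable $u:[0,T]\times\bar\Omega\to\mathbb{A}$ with $(b,f)(t,\bar X_t,\mathcal{L}_{\bar X_t},u_t)=\int_{\mathbb{A}}(b,f)(t,\bar X_t,\mathcal{L}_{\bar X_t},a)\,\bar q_t(\mathrm{d}a)$. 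Since $\sigma,\beta,h,g$ do not depend on the control, substituting $u$ leaves the state equation, the reflection conditions and the cost unchanged in law, so $(\bar\Omega,\bar{\mathcal{F}},\bar{\mathbb{F}},\bar{\mathbb{P}},\bar B,\bar{\tilde N},u,\bar X,\bar k)\in\mathcal{U}_{ad}$ and $J(u)=J(\bar\gamma)=\inf_{\gamma\in\mathcal{R}}J(\gamma)\le\inf_{u'\in\mathcal{U}_{ad}}J(u')$, whence $u$ is strict-optimal.

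\textbf{Main obstacle.} The technically heaviest point is handling the mean-reflection term in the limit: unlike the standard Skorokhod case, the constraint and the minimality condition involve $\mathbb{E}^{\mathbb{P}}(X_t)$, so one must simultaneously control the convergence of laws (hence of $s\mapsto\mathbb{E}(\bar X^n_s)$ uniformly in $s$), the convergence of the deterministic finite-variation processes $\bar k^n$, and the stability of the time-dependent Skorokhod problem under these perturbations; this is exactly where the bijection established in Section \ref{sk} and the a priori estimates \eqref{estim_var_k}, \eqref{ineq_estim_quad} and Proposition \ref{estim-VAR} are indispensable.
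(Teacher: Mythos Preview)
Your proposal is correct and follows essentially the same compactification strategy as the paper: take a minimizing sequence, establish uniform moment and increment bounds (your Step~1 is the paper's Lemma~\ref{optimallemma1}), prove tightness (the paper's Lemma~\ref{optimallemma3}), apply Skorokhod's representation, pass to the limit in each term of the dynamics and in the reflection conditions, show the limit realizes the infimum, and conclude strictness via measurable selection under Roxin's condition.

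A few minor differences worth noting. First, since each $k^n$ is deterministic, the paper handles it outside the Skorokhod representation via a direct Arzel\`a--Ascoli argument (using the increment bound of Lemma~\ref{optimallemma1}(ii)), which is slightly cleaner than folding it into the probabilistic tightness; your version works too but is redundant. Second, the paper works with the random measures $\tilde N^n$ directly as elements of $\mathbb{D}([0,T],\mathcal{M}(\Gamma))$ rather than introducing an auxiliary jump martingale $M^n$ and reconstructing the Poisson measure from its jumps; both routes are standard. Third, the paper actually proves \emph{convergence} of the cost $J(\gamma^n)\to J(\bar\gamma)$ rather than only lower semicontinuity via Fatou; this requires showing uniform convergence of $|k^n|$ to $|k|$ (again from Arzel\`a--Ascoli and the increment bound) so that the $h$-term passes to the limit. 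Your lower-semicontinuity argument would still yield optimality, but the paper's stronger conclusion is what also drives the approximation result in the subsequent subsection.
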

		Let $(\gamma_n)_{n \geq 0}$ be a minimizing sequence, such that $\lim_{n \to \infty} J(\gamma_n) = \inf_{\mu \in \mathcal{R}} J(\mu)$. Let $(X^n, k^n)$ be the unique solution of the following system:
		$$\left\{\begin{aligned}
			&X_t^n= x+ \int_{0}^{t}\int_{\mathbb{A}} b(s, X_s^n,\mathcal{L}_{X_s^n},a)q_s^n(\mathrm{d}a)\mathrm{d}s + \int_{0}^{t}\sigma(s,X_s^n, \mathcal{L}_{X_s^n}) \mathrm{d}B_s^n  \\
			& \quad\quad+\int_{0}^{t}\int_{\mathbb{R}^d\backslash\{0\}}\beta\left(s,  X_{s^-}^n, \mathcal{L}_{X_{s^-}^n},z\right)  \tilde{N}^n(\mathrm{d} s, \mathrm{d} z)+ k_t^n, \\
			&
			\mathbb{E}(X_t^n) \in \mathcal{D}, \\
			& \text{for all}\;z\in \mathcal{D},\; \;\int_0^t \langle \mathbb{E}(X_{s}^n) - z,  \mathrm{d}k^n_s \rangle \leq 0 .
		\end{aligned}
		\right.
		$$
		In what follows, $C$ is a constant that may vary from line to line. 
		\begin{lemma} \label{optimallemma1}
			There exists a positive constant C such that 
			\begin{itemize}
				\item [i)] $\displaystyle\sup_n \left(\mathbb{E}(\sup_{0 \leq 0 \leq T} |X^n_t |^2)+\sup_{0 \leq 0 \leq T} |k^n_t |^2+|k^n|_{T}\right)  \leq C.$
				\item[ii)] There exists a constant $C$ such that for all $s,t\in[0,T]$, 
				$$\sup_n\left(|k^n_t-k_s^n |^2+\mathbb{E}(|X^n_t-X_s^n |^2)\right)\leq C|t-s|.
				$$
			\end{itemize}
		\end{lemma}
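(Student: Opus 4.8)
The plan is to obtain both estimates from the a priori bounds for the mean-reflected Skorokhod map of Section~\ref{sk}, applied to $Y=Y^{n}$ with $\phi=\mathrm{Id}$; note that $(\mathbf{H})$ then holds with $l=l^{\ast}=\mathrm{Id}$ and $v\equiv 0$, so the constants produced by Theorem~\ref{th1}, Proposition~\ref{estim_quad} and Corollary~\ref{cor_1} are universal (depending only on $d$), while the growth constant $\gamma$ in $(\mathbf{A_1})$ is uniform in the control variable $a\in\mathbb{A}$; hence every constant below is independent of $n$. Write the input semimartingale as $Y^{n}=x+V^{n}+M^{n}$, with $V^{n}_{t}=\int_{0}^{t}\int_{\mathbb{A}}b(s,X^{n}_{s},\mathcal{L}_{X^{n}_{s}},a)\,q^{n}_{s}(\mathrm{d}a)\,\mathrm{d}s$ and $M^{n}$ the sum of the Brownian and compensated-Poisson integrals. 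Since $(X^{n},k^{n})$ is the solution furnished by Theorem~\ref{Main_th}, we already know $X^{n}\in\mathcal{S}^{2}_{0,T}$, and $(\mathbf{A_1})$ then gives $\mathbb{E}([M^{n}]_{T})+\mathbb{E}(|V^{n}|_{T}^{2})<\infty$, so the hypotheses of Proposition~\ref{estim_quad} and Corollary~\ref{cor_1} are met with $p=1$.

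For $i)$ I would apply Corollary~\ref{cor_1} on $[0,T]$, using $X^{n}_{0}=x$ and $k^{n}_{0}=0$, to get for every $q\in[0,T]$
\[
\mathbb{E}\Big(\sup_{0\le s\le q}|X^{n}_{s}|^{2}\Big)+\sup_{0\le s\le q}|k^{n}_{s}|^{2}\ \le\ C\Big(|x|^{2}+\mathbb{E}\big([M^{n}]_{q}\big)+\mathbb{E}\big(|V^{n}|_{q}^{2}\big)\Big).
\]
By the linear growth in $(\mathbf{A_1})$ (and the integrability of $\lambda$ on $\Gamma$ for the jump part), $\mathbb{E}([M^{n}]_{q})\le C\,\mathbb{E}\big(\int_{0}^{q}(1+|X^{n}_{s}|^{2})\,\mathrm{d}s\big)$ and, by Cauchy--Schwarz together with $q\le T$, $\mathbb{E}(|V^{n}|_{q}^{2})\le C\,\mathbb{E}\big(\int_{0}^{q}(1+|X^{n}_{s}|^{2})\,\mathrm{d}s\big)$; hence the right-hand side is $\le C\big(1+\int_{0}^{q}\mathbb{E}(\sup_{r\le s}|X^{n}_{r}|^{2})\,\mathrm{d}s\big)$, and Gronwall's lemma yields $\sup_{n}\mathbb{E}(\sup_{0\le t\le T}|X^{n}_{t}|^{2})\le C$ and $\sup_{n}\sup_{0\le t\le T}|k^{n}_{t}|^{2}\le C$. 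For $|k^{n}|_{T}$ I would then invoke the variation bound \eqref{estim_var_k} of Theorem~\ref{th1}: $|k^{n}|_{T}\le C(\sup_{s}|\mathbb{E}(Y^{n}_{s})|^{2}+1)\le C(1+\mathbb{E}(\sup_{s}|Y^{n}_{s}|^{2}))$, and $\mathbb{E}(\sup_{s}|Y^{n}_{s}|^{2})\le C(1+\mathbb{E}(\sup_{s}|X^{n}_{s}|^{2}))$ by the Burkholder--Davis--Gundy inequality and $(\mathbf{A_1})$, which is bounded uniformly in $n$ by the previous step.

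For $ii)$ I would apply Corollary~\ref{cor_1} on an arbitrary subinterval $[s,t]\subset[0,T]$:
\[
\mathbb{E}\Big(\sup_{s\le r\le t}|X^{n}_{s}-X^{n}_{r}|^{2}\Big)+\sup_{s\le r\le t}|k^{n}_{s}-k^{n}_{r}|^{2}\ \le\ C\,\mathbb{E}\Big([M^{n}]_{t}-[M^{n}]_{s}+(|V^{n}|_{t}-|V^{n}|_{s})^{2}\Big).
\]
Using $(\mathbf{A_1})$ together with part $i)$, $\mathbb{E}([M^{n}]_{t}-[M^{n}]_{s})\le C(t-s)\big(1+\mathbb{E}(\sup_{u}|X^{n}_{u}|^{2})\big)\le C(t-s)$, and by Cauchy--Schwarz $\mathbb{E}\big((|V^{n}|_{t}-|V^{n}|_{s})^{2}\big)\le (t-s)\,\mathbb{E}\big(\int_{s}^{t}|\int_{\mathbb{A}}b(u,X^{n}_{u},\mathcal{L}_{X^{n}_{u}},a)\,q^{n}_{u}(\mathrm{d}a)|^{2}\,\mathrm{d}u\big)\le C(t-s)^{2}\le C(t-s)$. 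Since $|X^{n}_{t}-X^{n}_{s}|^{2}\le\sup_{s\le r\le t}|X^{n}_{s}-X^{n}_{r}|^{2}$, this is exactly $ii)$.

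The only genuinely delicate point is the Gronwall step in $i)$: it is essential to run it off the \emph{localized} increment estimate of Corollary~\ref{cor_1} started at time $0$, rather than directly off \eqref{estim_var_k}, since the latter contributes a term quadratic in $\int_{0}^{T}\mathbb{E}(\sup|X^{n}|^{2})\,\mathrm{d}s$ that is not amenable to Gronwall's lemma; \eqref{estim_var_k} is then used only afterwards, once $\mathbb{E}(\sup_{t}|X^{n}_{t}|^{2})$ is already controlled. Everything else is a routine combination of the Burkholder--Davis--Gundy inequality and the linear-growth hypotheses in $(\mathbf{A_1})$, with uniformity in $n$ guaranteed by the uniformity of $\gamma$ in $a$.
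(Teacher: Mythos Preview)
Your argument is correct and follows essentially the same route as the paper: both parts rest on the Skorokhod-map increment estimate (the paper cites Proposition~\ref{estim_quad} directly with $\tilde{Y}\equiv x$, whereas you cite its special case Corollary~\ref{cor_1} with $t=0$), combined with the linear-growth bound from $(\mathbf{A_1})$, Gronwall's lemma, and then \eqref{estim_var_k} for the total variation of $k^{n}$. Your explicit remark about running the Gronwall step on the increment estimate for varying $q$, rather than on \eqref{estim_var_k}, is a useful clarification that the paper leaves implicit.
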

		\begin{proof}
			\begin{itemize}
				\item[$i)$] 
				From the inequality \eqref{ineq_estim_quad} in Proposition \ref{estim_quad}, we obtain
				\begin{align*}
					\mathbb{E}\left(\sup_{0 \leq t \leq T} |X^n_t-x|^2\right)&\leq C \mathbb{E}\left( \left(\int_0^T\int_{\mathbb{A}}|b(s,X_s^n,\mathcal{L}_{X_s^n},a)|q_s^n(\mathrm{d}a)\mathrm{d}s\right)^2+\int_0^T |\sigma(s,X_s^n,\mathcal{L}_{X_s^n})|^2\mathrm{d}s\right)\\
					&+C\mathbb{E}\left(\int_0^T \int_{\mathbb{R}^d\setminus {\{0 \}}}|\beta(s,X_{s^-}^n,\mathcal{L}_{X_{s^-}^n},z)|^2\lambda(\mathrm{d}z)\mathrm{d}s\right).
				\end{align*}
				We apply Jensen's inequality to the first term, and we use the fact that $b$, $\sigma$, and $\beta$ has linear growth, we obtain
				\begin{equation*}
					\mathbb{E}\left(\sup_{0 \leq t \leq T} |X^n_t|^2\right)\leq C\left(1+|x|^2+\int_0^T \sup_{0 \leq r \leq s} |X^n_r|^2 \mathrm{d}s\right),
				\end{equation*}
				we apply Gronwall's Lemma and deduce that $\mathbb{E}\left(\sup\limits_{0 \leq t \leq T} |X^n_t|^2\right)$ is uniformly bounded in $n$.
				
				By similar arguments, the result also holds for $\sup\limits_{0 \leq t \leq T} |k^n_t|^2$, and for $|k^n|_T$ by applying inequality \eqref{estim_var_k} to the latter.
				\item[$ii)$]Let $0\leq s\leq t\leq T$. Arguing similarly to the proof of Proposition \ref{estim-VAR}, but this time using the fact that $b$,$\sigma$, and $\beta$ have linear growth, we obtain:
				\begin{equation*}
					\mathbb{E}\left(|X_t^n-X_s^n|^2\right)\leq C(t-s)\left(1+\mathbb{E}\left(\sup_{0\leq s\leq T }|X_s^n|^2\right)\right)
				\end{equation*}
				from statement $(i)$, $\mathbb{E}\left(\sup\limits_{0 \leq t \leq T} |X^n_t|^2\right)$ is uniformly bounded in $n$, hence the inequality holds.
			\end{itemize}
		\end{proof}
		\begin{lemma}\label{optimallemma3}
			The sequence of processes $\gamma^n:= (X^n, B^n,\tilde{N}^n, q^n)$ is tight in the space $$S=\mathbb{D}([0,T], \mathbb{R}^d) \times \mathbb{C}([0,T], \mathbb{R}^d) \times \mathcal{V} \times \mathbb{D}([0,T],\mathcal{M}(\Gamma)),$$ endowed with the product topology, where the first factor is equipped with the $d_0$ metric, the second with the topology of uniform convergence, the third with the stable topology on measures, and the last with the $d_0$ metric.\\ Here, $\mathcal{M}(\Gamma)$ denotes the space of finite measures on $\Gamma$.
			
		\end{lemma}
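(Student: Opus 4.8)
The plan is to prove tightness coordinate by coordinate, which suffices: if each of the four marginal sequences is tight, then given $\varepsilon>0$ one selects compact sets in the respective factors each carrying mass at least $1-\varepsilon/4$, and their product is a compact subset of $S$ carrying mass at least $1-\varepsilon$ for every $n$.

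Two of the four coordinates are immediate. The processes $(B^n)_n$ are all standard Brownian motions and the $(\tilde N^n)_n$ all compensated Poisson random measures with the same Lévy measure $\lambda$ supported in $\Gamma$; hence for each of these two coordinates the law on the (Polish) path space does not depend on $n$, and a single Borel probability measure on a Polish space is tight by Ulam's theorem. For the control coordinate, since $\mathbb{A}$ is compact the whole space $\mathcal{V}$ of Radon measures on $[0,T]\times\mathbb{A}$ with first marginal equal to the Lebesgue measure is compact for the stable topology; hence $(q^n)_n$ is automatically tight.

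The substance is the tightness of $(X^n)_n$ in $\mathbb{D}([0,T],\mathbb{R}^d)$ for $d_0$, which I would obtain from Aldous' criterion. Write $X^n_t = x + A^n_t + \Sigma^n_t + M^n_t + k^n_t$, with relaxed drift $A^n_t=\int_0^t\int_{\mathbb{A}}b(s,X^n_s,\mathcal{L}_{X^n_s},a)\,q^n_s(\mathrm{d}a)\,\mathrm{d}s$, $\Sigma^n_t=\int_0^t\sigma(s,X^n_s,\mathcal{L}_{X^n_s})\,\mathrm{d}B^n_s$, and $M^n_t=\int_0^t\int_{\Gamma}\beta(s,X^n_{s^-},\mathcal{L}_{X^n_{s^-}},z)\,\tilde N^n(\mathrm{d}s,\mathrm{d}z)$. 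Lemma \ref{optimallemma1}$(i)$ gives $\sup_n\mathbb{E}\big(\sup_{t\le T}|X^n_t|^2\big)<\infty$, whence $(X^n_t)_n$ is tight in $\mathbb{R}^d$ for every $t$. For the stopping-time part of Aldous' criterion, fix a stopping time $\tau\le T$ and $\delta\in(0,1]$; using that $q^n_s$ is a probability measure on $\mathbb{A}$ together with Jensen's inequality for $A^n$, the optional sampling theorem applied to $(\Sigma^n)^2-\langle\Sigma^n\rangle$ and $(M^n)^2-\langle M^n\rangle$, the linear-growth bounds of $(\mathbf{A_1})$, the finiteness $\lambda(\Gamma)<\infty$, and Lemma \ref{optimallemma1}$(i)$, one obtains
\begin{equation*}
\mathbb{E}\big(|A^n_{(\tau+\delta)\wedge T}-A^n_\tau|^2+|\Sigma^n_{(\tau+\delta)\wedge T}-\Sigma^n_\tau|^2+|M^n_{(\tau+\delta)\wedge T}-M^n_\tau|^2\big)\le C\delta,
\end{equation*}
with $C$ independent of $n,\tau,\delta$; combined with $|k^n_{(\tau+\delta)\wedge T}-k^n_\tau|^2\le C\delta$ from Lemma \ref{optimallemma1}$(ii)$, this yields $\mathbb{E}\big(|X^n_{(\tau+\delta)\wedge T}-X^n_\tau|^2\big)\le C\delta$. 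Aldous' criterion then gives tightness of $(X^n)_n$ in $\mathbb{D}([0,T],\mathbb{R}^d)$, and assembling the four coordinates finishes the proof.

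The main obstacle is the $(X^n)_n$ coordinate, and inside it the control of the jump martingale $M^n$: verifying the Aldous increment bound for $M^n$ is precisely where the uniform $L^2$-moment estimate of Lemma \ref{optimallemma1} (itself resting on the Skorokhod-problem estimates of Section \ref{sk}) and the finiteness of $\lambda$ on $\Gamma$ enter. One should also note that applying Aldous' criterion directly to $X^n$ — rather than splitting off a continuous, $C$-tight part from a jump part — sidesteps having to check that such a decomposition is well behaved under the Skorokhod topology.
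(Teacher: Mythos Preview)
Your proof is correct and follows the same coordinate-by-coordinate strategy as the paper, which likewise invokes Aldous' criterion together with Lemma~\ref{optimallemma1} for $(X^n)$ and the compactness of $\mathcal{V}$ (via compactness of $\mathbb{A}$) for $(q^n)$. The only notable difference is your treatment of $(\tilde N^n)$: you observe that these processes all share the same law on a Polish space and are therefore tight by Ulam's theorem, whereas the paper appeals to Theorem~6.2 in \cite{kushner2}; your argument is more elementary and self-contained, while the paper's citation would also cover situations in which the laws of the $\tilde N^n$ genuinely vary with $n$.
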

		\begin{proof}
			The tightness of $(X^n)$ follows from Aldous's criterion, together with Lemma \ref{optimallemma1}.\\ Since $\Gamma$ is compact, the tightness of $\tilde{N}^n$ follows from Theorem 6.2 in \cite{kushner2}.\\ Moreover  $\mathbb{A}$ is compact, the set $\mathcal{V}$ is a closed subset of the space of finite measures on $[0,T]\times\mathbb{A}$ having uniformly bounded total mass. It follows that $\mathcal{V}$ is compact for the stable topology, which in this setting coincides with the vague and weak topologies. By Prokhorov’s theorem, the space $\mathcal{P}(\mathcal{V})$ is also compact. In particular, $(q^n)$ is tight.
		\end{proof}
		
		\begin{proof}[Proof of Theorem \ref{theooptimal}]
			From Lemmas \ref{optimallemma3}, 
			there exists a sub-sequence $(\gamma^{n_k})$ of $(\gamma^n)$, still denoted by $(\gamma^n)$, that converges weakly. By the Skorokhod representation theorem, there exists a probability space $(\hat{\Omega},\hat{\mathcal{F}}, \hat{\mathbb{P}})$, a sequence $\hat{\gamma}^n = (\hat{X}^n, \hat{B}^n, \hat{q}^n, \widehat{\tilde{N}^n})$ and $\hat{\gamma}=(\hat{X},\hat{B}, \hat{q},\widehat{\tilde{N}})$ defined on this space such that:
			\begin{itemize}
				\item[(i)] For each $n \in \mathbb{N}$, we have $\mathcal{L}_{\hat{\gamma}^n}= \mathcal{L}_{\gamma^n}$.
				\item[(ii)] The suequence $(\hat{\gamma}^n)$ converges to $(\hat{\gamma})$ $\hat{\mathbb{P}}$-a.s. in the space $S$.
			\end{itemize}
			From Lemma \ref{optimallemma1}, we conclude, by Azelà-Ascoli theorem, that there exists a subsequence of $(k^n)$, still denoted by $(k^n)$, that converges uniformly to some function $k$, and such that $|k^n|$ converge uniformly to $|k|$. 
			
			From (i), we have 
			$$\left\{\begin{aligned}\label{optimaleq2}
				&\hat{X}_t^n= x+ \int_{0}^{t}\int_{\mathbb{A}}b(s, \hat{X}^n_s,\mathcal{L}_{\hat{X}^n_s},a)\hat{q}_s^n(\mathrm{d}a)\mathrm{d}s+ \int_{0}^{t}\sigma(s,\hat{X}^n_s, \mathcal{L}_{\hat{X}^n_s})\mathrm{d}\hat{B}_s^n \\
				& \quad\quad+\int_{0}^{t}\int_{\mathbb{R}^d\backslash\{0\}}\beta\left(s,\hat{X}^n_{s^-}, \mathcal{L}_{\hat{X}^n_{s^-}},z\right)  \widehat{\tilde{N}^n}(\mathrm{d} s, \mathrm{d} z)+ k_t^n, \\
				&
				\hat{\mathbb{E}}(\hat{X}^n_t) \in \mathcal{D},\\
				& \text{for all}\;z\in \mathcal{D},\; \int_0^t \langle \hat{\mathbb{E}}(\hat{X}^n_{s})- z, \mathrm{d}k^n_s \rangle \leq 0 .
			\end{aligned}
			\right.
			$$
			To establish the limit, we will prove the following convergences hold in probability 
			\begin{equation}
				\int_{0}^{T}\int_{\mathbb{A}}b(s,\hat{X}^n_s,\mathcal{L}_{\hat{X}^n_s},a)\hat{q}_s^n(\mathrm{d}a)\mathrm{d}s \to  \int_{0}^{T}\int_{\mathbb{A}}b(s,\hat{X}_s,\mathcal{L}_{\hat{X}_s},a)\hat{q}_s(\mathrm{d}a)\mathrm{d}s \label{limits_1}
			\end{equation}
			
			\begin{equation}
				\int_{0}^{T}\int_{\mathbb{A}}f(s,\hat{X}^n_s,\mathcal{L}_{\hat{X}^n_s},a)\hat{q}_s^n(\mathrm{d}a)\mathrm{d}s \to  \int_{0}^{T}\int_{\mathbb{A}}f(s,\hat{X}_s,\mathcal{L}_{\hat{X}_s},a)\hat{q}_s(\mathrm{d}a)\mathrm{d}s \label{limits_2}
			\end{equation}
			
			\begin{equation*}
				\int_{0}^{T}\sigma(s,\hat{X}^n_s,\mathcal{L}_{\hat{X}^n_s})\mathrm{d}\hat{B}_s^n \to \int_{0}^{T}\sigma(s,\hat{X}_s, \mathcal{L}_{\hat{X}_s})\mathrm{d}\hat{B}_s \label{limits_3}
			\end{equation*}
			\begin{equation*}
				\int_{0}^{T}\int_{\mathbb{R}^d\backslash\{0\}}\beta\left(s,  \hat{X}^n_{s^-}, \mathcal{L}_{\hat{X}^n_{s^-}},z\right)  \widehat{\tilde{N}^n}(\mathrm{d} s, \mathrm{d} z) \to \int_{0}^{T}\int_{\mathbb{R}^d\backslash\{0\}}\beta\left(s, \hat{X}_{s^-}, \mathcal{L}_{\hat{X}_{s^-}},z\right)  \widehat{\tilde{N}}(\mathrm{d} s, \mathrm{d} z).\label{limits_4}
			\end{equation*}
			We have
			\begin{equation}
				\begin{split}
					\int_{0}^{T}\int_{\mathbb{A}}b(s,\hat{X}^n_s,\mathcal{L}_{\hat{X}^n_s},a)\hat{q}_s^n(\mathrm{d}a)\mathrm{d}s&-  \int_{0}^{T}\int_{\mathbb{A}}b(s,\hat{X}_s,\mathcal{L}_{\hat{X}_s},a)\hat{q}_s(\mathrm{d}a)\mathrm{d}s\\
					&=\int_{0}^{T}\int_{\mathbb{A}}b(s,\hat{X}^n_s,\mathcal{L}_{\hat{X}^n_s},a)-b(s,\hat{X}_s,\mathcal{L}_{\hat{X}_s},a)\hat{q}_s^n(\mathrm{d}a)\mathrm{d}s \\
					&+\int_{0}^{T}\int_{\mathbb{A}}b(s,\hat{X}_s,\mathcal{L}_{\hat{X}_s},a)(\hat{q}_s^n-\hat{q}_s)(\mathrm{d}a)\mathrm{d}s\label{twotherm}.
				\end{split}
			\end{equation}
			On the one hand, using the fact that $(\hat{q}^n)$ is a sequence of probability measures, and from the Lipschitz assumptions on $b$, we get 
			\begin{equation}\label{thefirst}
				\hat{\mathbb{E}}\left(\int_{0}^{
					T}\int_{\mathbb{A}}|b(s,\hat{X}^n_s,\mathcal{L}_{\hat{X}^n_s},a)-b(s,\hat{X}_s,\mathcal{L}_{\hat{X}_s},a)|^2\hat{q}_s^n(\mathrm{d}a)\mathrm{d}s\right)\leq C \int_0^T\hat{\mathbb{E}}(|\hat{X}_s^n-\hat{X}_s|^2)\mathrm{d}s
			\end{equation}
			moreover, we have
			$$|\hat{X}^n_t-\hat{X}_t|^2\leq 2\left(d_{\circ}(\hat{X}^n,0)^2+ d_{\circ}(\hat{X},0)^2\right) $$
			since $\lim\limits_{n} d_{\circ}(\hat{X}^n,0)^2+ d_{\circ}(\hat{X},0)^2= 2d_{\circ}(\hat{X},0)^2$, there exists $m\in\mathbb{N}$ such that for all $n\geq m$,
			$$|\hat{X}^n_t-\hat{X}_t|^2\leq 4d_{\circ}(\hat{X},0)^2+1. $$
			Using Fatou's Lemma, we obtain
			\begin{equation*}
				\hat{\mathbb{E}}\left(d_{\circ}(\hat{X},0)^2\right)\leq \lim\limits_{n} \hat{\mathbb{E}}\left(d_{\circ}(\hat{X}^n,0)^2\right)\leq  \lim\limits_{n} \sup_{n}\hat{\mathbb{E}}\left(\sup_{0\leq s\leq T}|\hat{X}^n|^2\right)\leq C
			\end{equation*}
			where $C$ is the constant from Lemma \ref{optimallemma1}.
			
			Since convergence in the $d_{\circ}$ metric implies pointwise convergence for almost every $t\in [0,T]$,
			we can apply the dominated convergence theorem and conclude that  $\lim\limits_{n} \hat{\mathbb{E}}(|\hat{X}_{t}^n-\hat{X}_{t}|^2)=0$ for a.e $t\in [0,T]$.
			
			We repeat the same arguments, we obtain that $\hat{\mathbb{E}}(|\hat{X}_{t}^n-\hat{X}_{t}|^2)$ is uniformly bounded. Hence, by dominated convergence again,
			$$\lim\limits_{n} \int_0^T\hat{\mathbb{E}}(|\hat{X}_{t}^n-\hat{X}_{t}|^2)\mathrm{d}s=0.$$
			Together with \eqref{thefirst}, this implies that the first term on the right hand side of \eqref{twotherm} converges to zero in probability.
			
			It remains to prove the convergence of the second term in \eqref{twotherm}. Let $M>0$, we have
			\begin{equation}\label{twotherm1}
				\begin{split}
					\left|\int_{0}^{t}\int_{\mathbb{A}}b(s,\hat{X}_s,\mathcal{L}_{\hat{X}_s},a)(\hat{q}_s^n-\hat{q}_s)(\mathrm{d}a)\mathrm{d}s\right|&\leq \left|\int_{0}^{t}\int_{\mathbb{A}}b(s,\hat{X}_s,\mathcal{L}_{\hat{X}_s},a)\mathds{1}_{\{|\hat{X}_{s}|< M\}}(\hat{q}_s^n-\hat{q}_s)(\mathrm{d}a)\mathrm{d}s\right|\\
					&+\left|\int_{0}^{t}\int_{\mathbb{A}}b(s,\hat{X}_s,\mathcal{L}_{\hat{X}_s},a)\mathds{1}_{\{|\hat{X}_{s}| \geq M\}}(\hat{q}_s^n-\hat{q}_s)(\mathrm{d}a)\mathrm{d}s\right|
				\end{split}
			\end{equation}
			we have that the mapping $(s,a) \mapsto b(s,\hat{X}_s,\mathcal{L}_{\hat{X}_s},a)\mathds{1}_{\{|\hat{X}_{s}|<M\}}$ is bounded and measurable, and the function $a \mapsto b(s,\hat{X}_s,\mathcal{L}_{\hat{X}_s},a)\mathds{1}_{\{|\hat{X}_{s}|<M\}}$ is continuous. From (ii), we conclude that the first term on the right-hand side of \eqref{twotherm1} converges to zero. Then, by the Lebesgue dominated convergence theorem, we obtain convergence in $L_1(\hat{\mathbb{P}})$. 
			
			On the other hand, from the linear growth assumption on $b$, we obtain 
			\begin{align*}
				&\hat{\mathbb{E}}\left(\left|\int_{0}^{T}\int_{\mathbb{A}}b(s,\hat{X}_s,\mathcal{L}_{\hat{X}_s},a)\mathds{1}_{\{|\hat{X}_{s}|\geq M\}}(\hat{q}_s^n-\hat{q}_s)(\mathrm{d}a)\mathrm{d}s\right| \right) \\
				&\leq\gamma\hat{\mathbb{E}}\left(\int_{0}^{T} \left( 1+|\hat{X}_{s}|\right) \mathds{1}_{\{|\hat{X}_{s}|\geq M\}} \mathrm{d}s \right)\\
				&\leq \frac{1}{M} \int_{0}^{T} \hat{\mathbb{E}}\left(|\hat{X}_{s}|^2+ 1 \right) \mathrm{d}s 
			\end{align*}
			where, in the last inequality we used the formula $x\mathds{1}_{\{ x>a\}}\leq \frac{x^2}{a}$, valid for $x,a>0$. Letting $M\to\infty$, and then $n\to\infty$, we obtain that the second term in \eqref{twotherm1} converges in $L_1(\hat{\mathbb{P}})$ to zero. Thus, we conclude the first convergence in \eqref{limits_1}. The remaining convergences can be shown using similar arguments and classical techniques that can be found in \cite{Situ}. Since $\hat{X}$ and $k$ are càdlàg, it follows that $\hat{\mathbb{P}}$-a.s for for every $t\in [0,T]$,
			\begin{multline*}
				\hat{X}_t =x+ \int_{0}^{t}\int_{\mathbb{A}} b(s, \hat{X}_s,\mathcal{L}_{\hat{X}_s}, a) \hat{q}_s(\mathrm{d}a)\mathrm{d}s\\ +  \int_{0}^{t}\sigma(s,\hat{X}_s,\mathcal{L}_{\hat{X}_s}) \mathrm{d}\hat{B}_s + \int_{0}^{t}\int_{\mathbb{R}^d\backslash\{0\}}\beta\left(s,  \hat{X}_{s^-}, \mathcal{L}_{\hat{X}_{s^-}},z\right)  \widehat{\tilde{N}}(\mathrm{d} s, \mathrm{d} z)+k_t.
			\end{multline*}
			For $z\in\mathcal{D}$, we show the following convergence
			\begin{equation}
				\int_0^t \langle\hat{\mathbb{E}}(\hat{X}^n_s)-z, \mathrm{d}k^n_s \rangle \to \int_0^t \langle\hat{\mathbb{E}}(\hat{X}_s)-z, \mathrm{d}k_s \rangle \label{cv_k}
			\end{equation} 
			we observe that
			\begin{multline}
				\sup_{s\in [0,T]}|\hat{\mathbb{E}}(\hat{X}^n_s-\hat{X}_s)|\leq\\\sup_{s\in [0,T]}| k^n_s -k_s|+ \hat{\mathbb{E}} \left| \int_{0}^{T}\int_{\mathbb{A}}b(s,\hat{X}^n_s,\mathcal{L}_{\hat{X}^n_s},a)\hat{q}_s^n(\mathrm{d}a)\mathrm{d}s-  \int_{0}^{T}\int_{\mathbb{A}}b(s,\hat{X}_s,\mathcal{L}_{\hat{X}_s},a)\hat{q}_s(\mathrm{d}a)\mathrm{d}s \right|\label{limit}
			\end{multline}
			We have $k^n$ converges uniformly to $k$, together with \eqref{limits_1}, this implies that $\hat{\mathbb{E}}(\hat{X}^n)$ converges uniformly to $\hat{\mathbb{E}}(\hat{X})$. By applying Lemma 5.8 in \cite{petit}, the convergence in \eqref{cv_k} follows.

			From all the results above, and since $\mathcal{D}$ is closed, we have
			$$\left\{\begin{aligned}\label{optimaleq2}
				&\hat{X}_t= x+ \int_{0}^{t}\int_{\mathbb{A}}b(s, \hat{X}_s,\mathcal{L}_{\hat{X}_s},a)\hat{q}_s(\mathrm{d}a)\mathrm{d}s+ \int_{0}^{t}\sigma(s,\hat{X}_s, \mathcal{L}_{\hat{X}_s})\mathrm{d}\hat{B}_s \\
				& \quad\quad+\int_{0}^{t}\int_{\mathbb{R}^d\backslash\{0\}}\beta\left(s,\hat{X}_{s^-}, \mathcal{L}_{\hat{X}_{s^-}},z\right)  \widehat{\tilde{N}}(\mathrm{d} s, \mathrm{d} z)+ k_t, \\
				&
				\hat{\mathbb{E}}(\hat{X}_t) \in \mathcal{D},\\
				& \text{for all}\;z\in \mathcal{D},\; \int_0^t \langle \hat{\mathbb{E}}(\hat{X}_{s})- z, \mathrm{d} k_s \rangle \leq 0 .
			\end{aligned}
			\right.
			$$
			Now, we check that $\hat{q}$ is an optimal control.\\
			Using the properties (i)-(ii) outlined above, along with assumption on cost function coefficients, we conclude that
			\begin{align*}
				\inf_{\gamma \in \mathcal{R}} J(\gamma) &= \lim_{n \to \infty} J(\gamma^n) \\
				&= \lim_{n \to \infty} \mathbb{E}\left( \int_0^T \int_{\mathbb{A}}f(s, X^n_s,\mathcal{L}_{X_s^n}, a)q^n_s(\mathrm{d} a) \mathrm{d}s + \int_0^T h(s,X^n_s,\mathcal{L}_{X_s^n})\mathrm{d}|k^n|_s + g(X^n_T) \right)\\
				&= \lim_{n \to \infty} \hat{\mathbb{E}}\left( \int_0^T \int_{\mathbb{A}} f(s, \hat{X}_s^n,\mathcal{L}_{\hat{X}_s^n}, a)\hat{q}^n_s(\mathrm{d}a) \mathrm{d}s + \int_0^T h(s,\hat{X}_s^n,\mathcal{L}_{\hat{X}_s^n})\mathrm{d}|k^n|_s + g(\hat{X}^n_T) \right) \\
				&= \hat{\mathbb{E}}\left( \int_0^T \int_{\mathbb{A}} f(s, \hat{X}_s,\mathcal{L}_{\hat{X}_s}, a)\hat{q}_s(\mathrm{d}a) \mathrm{d}s + \int_0^T h(s,\hat{X}_s, \mathcal{L}_{\hat{X}_s})\mathrm{d}|k|_s + g(\hat{X}_T) \right).
			\end{align*}
			From \eqref{limits_2}, the convergence of the first term follows. Then, using the Lipchitz assumption on $h$ and by arguments similar to those in \eqref{limit}, we obtain the uniform convergence of $\mathbb{\hat{E}}(h(\cdot,\hat{X}^n_{\cdot}, \mathcal{L}_{\hat{X}^n_{\cdot}}))$ to $\mathbb{\hat{E}}(h(\cdot,\hat{X}_{\cdot}, \mathcal{L}_{\hat{X}_{\cdot}}))$. We then conclude the convergence of the second term. Finally, by repeating the same arguments as above and applying the Lebesgue convergence theorem, we obtain the convergence of the last term.
			
			The proof of the first part is now complete.
			
			For the second part, we define the following:
			\begin{equation*}
				\hat{b}(t,\omega):=\int_\mathbb{A} b(t, \hat{X}_t(\omega),\mathcal{L}_{\hat{X}_t}, a)\hat{q}_t (\omega,\mathrm{d}a),\;\text{and}\; \hat{f}(t,\omega)=:\int_{\mathbb{A}} f(t, \hat{X}_t(\omega), \mathcal{L}_{\hat{X}_t}, a)\hat{q}_t (\omega,\mathrm{d}a).
			\end{equation*}
			$$E\left((t,\omega)\right):=\left\{ (b(t,\hat{X}_t(\omega),\mathcal{L}_{\hat{X}_s},a),f(t,\hat{X}_t(\omega),\mathcal{L}_{\hat{X}_s},a),\;\;a\in\mathbb{A} \right\}.$$
			By Roxin's conditions, we have $(\hat{b}(t,\omega),\hat{f}(t,\omega))\in E\left((t,\omega)\right)$. Therefore, by applying a measurable selection theorem (Theorem A.9 in \cite{haussmann}), there exists an a $\mathbb{A}$-valued, $\mathcal{F}^{\hat{X}, \hat{q}}$-adapted process $\hat{u}$ such that $\hat{\mathbb{P}}$-a.s. for every $t\in [0,T]$,
			\begin{equation*}
				\int_{\mathbb{A}} b(t, \hat{X}_t,\mathcal{L}_{\hat{X}_t}, a)\hat{q}_t (\mathrm{d}a) = b(t, \hat{X}_t, \mathcal{L}_{\hat{X}_t}, \hat{u}_t),\;\text{and}\; \int_{\mathbb{A}} f(t, \hat{X}_t,\mathcal{L}_{\hat{X}_t}, a)\hat{q}_t (\mathrm{d}a) = f(t, \hat{X}_t, \mathcal{L}_{\hat{X}_s}, \hat{u}_t),
			\end{equation*}
			which completes the proof.
		\end{proof}
		\subsection{Approximation of the relaxed control problem}
		In this section, we show that any relaxed control \(\gamma \in \mathcal{R}\) can be approximated by a sequence of strict controls in \(\mathcal{U}_{ad}\). Moreover, the infimum of the cost functional over strict controls coincides with the infimum over relaxed controls. This result is based on the \emph{chattering lemma}.
		
		We begin by stating the chattering lemma as established in \cite{meleard}, page 196.
		\begin{lemma}\label{chattering lemma}
			Let $\gamma = (\Omega, \mathcal{F}, \mathbb{F}, \mathbb{P}, B, \tilde{N}, q, X, k)$ be a relaxed control. Then, there exists a sequence of $\mathbb{F}$-adapted processes $(u^n_t)$ taking values in $\mathbb{A}$, such that the sequence of random measures $\delta_{u^n_t}(du)\,dt$ converges to $q_t(du)\,dt$ in $\mathcal{V}$, $\mathbb{P}$-almost surely.
		\end{lemma}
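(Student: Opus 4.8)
The statement is the classical chattering lemma, so the plan is to reproduce its proof while being careful about the adaptedness requirement, which is the only point not already contained in the deterministic version. I would proceed in three steps: (i) discretize time on a uniform grid, using a one‑mesh delay so that the approximating control stays $\mathbb{F}$‑adapted; (ii) on each time cell, replace the averaged relaxed control by a finitely supported probability measure on $\mathbb{A}$ and \emph{chatter} it exactly by slicing the cell into subintervals of prescribed lengths; (iii) pass to the limit using that $\mathcal{V}$ is compact and metrizable for the stable topology.

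For step (i), set $t^n_k = kT/n$ and $\bar q^{\,n}_k := \frac{n}{T}\int_{t^n_k}^{t^n_{k+1}} q_s\,\mathrm{d}s \in \mathcal{P}(\mathbb{A})$. Since $\mathbb{A}$ is compact metric, $\mathcal{P}(\mathbb{A})$ is compact metrizable, the integral is well defined, and $\bar q^{\,n}_k(A) = \frac{n}{T}\,q\big([t^n_k,t^n_{k+1})\times A\big)$ is $\mathcal{F}_{t^n_{k+1}}$‑measurable for every Borel $A\subseteq\mathbb{A}$ because $\mathds{1}_{]0,t^n_{k+1}]}q$ is $\mathcal{F}_{t^n_{k+1}}$‑measurable; hence $\bar q^{\,n}_k$ is an $\mathcal{F}_{t^n_{k+1}}$‑measurable $\mathcal{P}(\mathbb{A})$‑valued random variable. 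Testing against a countable dense family in $C(\mathbb{A})$ and applying Lebesgue's differentiation theorem scalarly, one gets that for $\mathbb{P}$‑a.e.\ $\omega$ the step map $t\mapsto \bar q^{\,n}_{\lfloor nt/T\rfloor}(\omega)$ converges weakly to $q_t(\omega)$ for a.e.\ $t$, and in particular $\int_0^T \big|\int_{\mathbb{A}} h\,\mathrm{d}\bar q^{\,n}_{\lfloor nt/T\rfloor} - \int_{\mathbb{A}} h\,\mathrm{d}q_t\big|\,\mathrm{d}t \to 0$ for every such $h$.

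For step (ii), fix $n$ and for each $k$ choose a finite Borel partition $\mathbb{A} = \bigsqcup_{j\le m_n} A^n_j$ of mesh $\le 1/n$ with points $a^n_j \in A^n_j$, and put $p^n_j(k) := \bar q^{\,n}_k(A^n_j)$. On the cell $[t^n_{k+1},t^n_{k+2})$ I would cut consecutive subintervals of lengths $\frac{T}{n}p^n_1(k),\dots,\frac{T}{n}p^n_{m_n}(k)$ and set $u^n_t := a^n_j$ on the $j$‑th one (and $u^n_t := a_0$ on $[0,t^n_1)$ for a fixed $a_0\in\mathbb{A}$). The subinterval endpoints are Borel functions of the $\mathcal{F}_{t^n_{k+1}}$‑measurable vector $(p^n_j(k))_j$, so on each cell $u^n$ is a deterministic function of $t$ and of an $\mathcal{F}_{t^n_{k+1}}$‑measurable random vector; being piecewise constant with breakpoints that are deterministic given this vector, $u^n$ is $\mathbb{F}$‑adapted (indeed progressively measurable). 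This is exactly where the one‑mesh delay, namely using $\bar q^{\,n}_k$ on $[t^n_{k+1},t^n_{k+2})$ rather than on $[t^n_k,t^n_{k+1})$, is needed.

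For step (iii), since $\mathcal{V}$ with the stable topology is compact and metrizable, $\delta_{u^n_t}(\mathrm{d}a)\,\mathrm{d}t \to q_t(\mathrm{d}a)\,\mathrm{d}t$ in $\mathcal{V}$ reduces to checking $\int_0^s h(u^n_t)\,\mathrm{d}t \to \int_0^s\!\int_{\mathbb{A}} h\,\mathrm{d}q_t\,\mathrm{d}t$ for $s$ in a countable dense subset of $[0,T]$ and $h$ in a countable dense subset of $C(\mathbb{A})$. For fixed $h$ the construction gives on each cell $\big|\int_{t^n_{k+1}}^{t^n_{k+2}} h(u^n_t)\,\mathrm{d}t - \frac{T}{n}\int_{\mathbb{A}} h\,\mathrm{d}\bar q^{\,n}_k\big| \le \frac{T}{n}\big(\omega_h(1/n) + 2\|h\|_\infty/n\big)$, with $\omega_h$ the modulus of continuity of $h$; summing over $k$, combining with step (i), and noting that the one‑mesh time shift only contributes an $L^1(\mathrm{d}t)$ error vanishing by absolute continuity of $t\mapsto\int_0^t\int_{\mathbb{A}} h\,\mathrm{d}q_s\,\mathrm{d}s$, yields the convergence for $\mathbb{P}$‑a.e.\ $\omega$, and intersecting the countably many almost sure events over $(s,h)$ gives $\mathbb{P}$‑a.s.\ stable convergence. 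I expect the only real obstacle to be precisely this interplay between adaptedness and approximation: one cannot chatter instantaneously while remaining adapted, so the delayed construction and the measurable‑in‑$\omega$ allocation of subintervals must be arranged with care, and one must check quantitatively that both the delay and the partition mesh wash out as $n\to\infty$; the convergence itself is then routine. Alternatively one may simply invoke the construction of \cite{meleard}, p.~196.
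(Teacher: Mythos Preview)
The paper does not prove this lemma: it is stated without proof and attributed to \cite{meleard}, p.~196 (the sentence introducing it reads ``We begin by stating the chattering lemma as established in \cite{meleard}, page 196''). Your proposal therefore goes beyond what the paper does, supplying a self-contained argument where the paper simply invokes the literature.

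Your sketch is essentially correct and follows the standard route: time-discretize with a one-mesh delay to preserve adaptedness, replace the cell averages by finitely supported measures via a fine partition of $\mathbb{A}$, chatter deterministically on each cell, and pass to the limit using compactness/metrizability of $\mathcal{V}$. Two minor points worth tightening: in step~(i) what you need is not Lebesgue differentiation per se but the $L^1([0,T])$ approximation of the bounded measurable function $t\mapsto\int_{\mathbb{A}} h\,\mathrm{d}q_t$ by its piecewise-constant cell averages, which is the statement you actually use; and in step~(iii) the reduction to test functions of the form $\mathds{1}_{[0,s]}(t)h(a)$ relies on the fact that on the set of measures with fixed $t$-marginal the stable and weak topologies coincide, which you might state explicitly. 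Neither affects the validity of the argument. Your closing remark that one may alternatively invoke \cite{meleard} is exactly what the paper does.
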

		Note that each $\gamma^n := (\Omega, \mathcal{F}, \mathbb{F}, \mathbb{P}, B, \tilde{N}, u^n, X^n, k^n)$ belongs to the set of strict controls \(\mathcal{U}_{ad}\), where \((X^n, k^n)\) is the solution of the equation \eqref{eqdef} corresponding to the control \(u^n\).
		\begin{theorem}
			Under Assumptions $\left(\mathbf{A_1}\right)$ and $\left(\mathbf{A_2}\right)$, the following hold:
			$$\displaystyle \lim_{n \to \infty} \mathbb{E}^{\mathbb{P}} \left( \sup_{0 \leq t \leq T} |X^n_t - X_t|^2 \right) + \sup_{0 \leq t \leq T} |k^n_t - k_t|^2 = 0$$
			and
			$$\displaystyle \lim_{n \to \infty} J(\gamma^n) = J(\gamma).$$
			Moreover, $$\inf_{u\in\mathcal{U}_{ad}}J(u)= \inf_{\gamma\in\mathcal{R}}J(\gamma)$$
		\end{theorem}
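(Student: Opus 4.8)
The plan is to exploit that here the reflection is imposed directly on $\mathbb{E}^{\mathbb{P}}(X_t)$, i.e.\ $\phi=\mathrm{id}$, so $(\mathbf{H})$ holds with $l=\mathrm{id}$, $v\equiv 0$ and all the results of Sections~\ref{sk} and~\ref{sdes} apply; in particular $D_t\equiv\mathcal{D}$ and $(\mathbb{E}^{\mathbb{P}}(X^n),k^n)$, $(\mathbb{E}^{\mathbb{P}}(X),k)$ solve the classical time-independent Skorokhod problem in $\mathcal{D}$ with Lipschitz inputs (the expectations of the drifts are bounded, by linear growth and the a priori $L^2$ bound). I would first record, as in Lemma~\ref{optimallemma1} but using only linear growth, the uniform bounds $\sup_n\!\big(\mathbb{E}^{\mathbb{P}}\sup_{t\le T}|X^n_t|^2+\sup_{t\le T}|k^n_t|^2+|k^n|_T\big)<\infty$, and deduce from Corollary~\ref{cor_1} applied to this deterministic problem that $\{k^n\}$ and $\{|k^n|_\cdot\}$ are equi-Lipschitz. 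Second, I would establish the key chattering estimate: with $\rho^n_t:=\int_0^t\!\int_{\mathbb{A}}b(s,X_s,\mathcal{L}_{X_s},a)(\delta_{u^n_s}-q_s)(\mathrm{d}a)\,\mathrm{d}s$, for fixed $\omega$ the integrand $(s,a)\mapsto b(s,X_s(\omega),\mathcal{L}_{X_s},a)$ is bounded, measurable, and continuous in $a$, so the $\mathbb{P}$-a.s.\ convergence $\delta_{u^n_\cdot}(\mathrm{d}a)\,\mathrm{d}t\to q_\cdot(\mathrm{d}a)\,\mathrm{d}t$ in $\mathcal{V}$ gives $\rho^n_t(\omega)\to0$ for each $t$; since $\rho^n$ is equi-Lipschitz in $t$, $\sup_{t\le T}|\rho^n_t|\to0$ a.s., and by domination ($X\in\mathcal{S}^2_{0,T}$) also $\mathbb{E}^{\mathbb{P}}\sup_{t\le T}|\rho^n_t|^2\to0$.

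For the convergence of $(X^n,k^n)$ to $(X,k)$ I would run a two-step comparison, writing $Y$ for the semimartingale with $(X,k)=RP_{\mathcal{D}}(\mathrm{id},Y)$. Let $(\hat X^n,\hat k^n):=RP_{\mathcal{D}}(\mathrm{id},\hat Y^n)$ where $\hat Y^n$ keeps the diffusion and jump parts of $Y$ but replaces the drift by $\int_0^\cdot b(s,X_s,\mathcal{L}_{X_s},u^n_s)\,\mathrm{d}s$, so $\hat Y^n-Y=\rho^n$ pathwise. Applying Proposition~\ref{estime11} to $(\hat X^n,\hat k^n)$ and $(X,k)$ (same domain, $l=\mathrm{id}$), the bound on $|\hat k^n_t-k_t|^2$ only involves $|\mathbb{E}^{\mathbb{P}}\rho^n_t|^2$ and the pairing of the increments $\mathbb{E}^{\mathbb{P}}\rho^n_t-\mathbb{E}^{\mathbb{P}}\rho^n_s$ — which are $o(1)$ in sup-norm — against $\mathrm{d}|\hat k^n|+\mathrm{d}|k|$, of uniformly bounded mass; hence $\sup_{t\le T}|\hat k^n_t-k_t|^2\to0$, and since $\hat X^n-X=\rho^n+(\hat k^n-k)$, $\mathbb{E}^{\mathbb{P}}\sup_{t\le T}|\hat X^n_t-X_t|^2\to0$. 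Then $Y^n$ and $\hat Y^n$ carry the same control and differ only through evaluation of the coefficients at $X^n$ versus $X$; Proposition~\ref{estim_quad} with $p=1$, the Lipschitz assumptions, and $W_2(\mathcal{L}_{X^n_s},\mathcal{L}_{X_s})^2\le\mathbb{E}^{\mathbb{P}}|X^n_s-X_s|^2$ give $\mathbb{E}^{\mathbb{P}}\sup_{s\le t}|X^n_s-\hat X^n_s|^2+\sup_{s\le t}|k^n_s-\hat k^n_s|^2\le C\int_0^t\mathbb{E}^{\mathbb{P}}\sup_{r\le s}|X^n_r-X_r|^2\,\mathrm{d}s$; writing $X^n-X=(X^n-\hat X^n)+(\hat X^n-X)$ and invoking Gronwall's lemma proves the first assertion, for both $X$ and $k$.

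For $J(\gamma^n)\to J(\gamma)$ I would treat the three terms separately. The running cost splits into $\int_0^T[f(s,X^n_s,\mathcal{L}_{X^n_s},u^n_s)-f(s,X_s,\mathcal{L}_{X_s},u^n_s)]\,\mathrm{d}s\to0$ in $L^1(\mathbb{P})$ (Lipschitz $f$ and the first assertion) plus $\int_0^T\!\int_{\mathbb{A}}f(s,X_s,\mathcal{L}_{X_s},a)(\delta_{u^n_s}-q_s)(\mathrm{d}a)\,\mathrm{d}s\to0$ $\mathbb{P}$-a.s.\ (the chattering argument above, with $f$ in place of $b$) and in $L^1(\mathbb{P})$ by dominated convergence; the terminal cost converges since $X^n_T\to X_T$ in $L^2$, $g$ is continuous with linear growth and $\{g(X^n_T)\}$ is uniformly integrable. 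For the reflection cost, $k^n$ being deterministic, $\mathbb{E}^{\mathbb{P}}\int_0^T h(s,X^n_s,\mathcal{L}_{X^n_s})\,\mathrm{d}|k^n|_s=\int_0^T\bar h^n(s)\,\mathrm{d}|k^n|_s$ with $\bar h^n(s):=\mathbb{E}^{\mathbb{P}}h(s,X^n_s,\mathcal{L}_{X^n_s})$ converging uniformly to $\bar h(s):=\mathbb{E}^{\mathbb{P}}h(s,X_s,\mathcal{L}_{X_s})$, so one is left with $\int_0^T\bar h\,\mathrm{d}|k^n|_s\to\int_0^T\bar h\,\mathrm{d}|k|_s$. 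This is the step I expect to be the main obstacle: uniform convergence $k^n\to k$ gives $\mathrm{d}k^n\rightharpoonup\mathrm{d}k$ weakly but does \emph{not} control the variation measures $\mathrm{d}|k^n|$, and one needs $|k^n|_T\to|k|_T$ to upgrade this to $\mathrm{d}|k^n|\rightharpoonup\mathrm{d}|k|$. I would obtain the latter from the equi-Lipschitz bound on $\{|k^n|_\cdot\}$ (Corollary~\ref{cor_1}), which makes the family relatively compact in $C([0,T])$, and then identify any subsequential uniform limit with $|k|_\cdot$ via the minimality/normal-cone characterization of the Skorokhod reflection together with the uniform convergence $\mathbb{E}^{\mathbb{P}}(X^n)\to\mathbb{E}^{\mathbb{P}}(X)$ (the reflection direction at $\mathbb{E}^{\mathbb{P}}(X^n_s)$ being controlled by the geometry of $\partial\mathcal{D}$ at the limit point) — the same mechanism used for reflection-term convergence in the proof of Theorem~\ref{theooptimal}, cf.\ Lemma~5.8 of \cite{petit}. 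Since $\mathrm{d}|k|$ is absolutely continuous, the càdlàg function $\bar h$ is $\mathrm{d}|k|$-a.e.\ continuous, and a Portmanteau argument then finishes the $h$-term, giving $J(\gamma^n)\to J(\gamma)$.

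Finally, for the equality of infima: $\mathcal{U}_{ad}$ embeds into $\mathcal{R}$ by identifying $u$ with $q_t=\delta_{u_t}$, which has the same state, reflection term and cost, so $\inf_{\mathcal{R}}J\le\inf_{\mathcal{U}_{ad}}J$; conversely, for any $\gamma\in\mathcal{R}$ the chattering lemma (Lemma~\ref{chattering lemma}) and the two convergences above produce $\gamma^n\in\mathcal{U}_{ad}$ with $J(\gamma^n)\to J(\gamma)$, whence $\inf_{\mathcal{U}_{ad}}J\le J(\gamma)$ for every $\gamma\in\mathcal{R}$ and therefore $\inf_{\mathcal{U}_{ad}}J=\inf_{\mathcal{R}}J$.
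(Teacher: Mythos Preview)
Your proposal is correct and close in spirit to the paper, but for part~(i) you take a genuinely different route. The paper applies Proposition~\ref{estim_quad} directly to the pair $(Y^n,Y)$, splits the drift difference into a Lipschitz piece $b(s,X^n_s,\mathcal{L}_{X^n_s},u^n_s)-b(s,X_s,\mathcal{L}_{X_s},u^n_s)$ (absorbed by Gronwall) and a chattering piece $\int_{\mathbb{A}}b(s,X_s,\mathcal{L}_{X_s},a)(\delta_{u^n_s}-q_s)(\mathrm{d}a)$, truncates the latter at level $M$, and concludes by dominated convergence. Your two-step decomposition through the intermediate $\hat Y^n$ instead uses Proposition~\ref{estime11} for the chattering step; this has the advantage of requiring only $\sup_t|\mathbb{E}^{\mathbb{P}}\rho^n_t|\to0$ --- immediate from stable convergence plus equi-Lipschitzness of $t\mapsto\rho^n_t$ --- rather than control of $\int_0^T|\psi^n_s|^2\,\mathrm{d}s$ with $\psi^n_s=\int_{\mathbb{A}}b(s,X_s,\mathcal{L}_{X_s},a)(\delta_{u^n_s}-q_s)(\mathrm{d}a)$, which stable convergence does \emph{not} deliver pointwise in~$s$. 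The paper's direct route is shorter, but its appeal to pointwise-in-$s$ convergence of $\psi^n_s$ is a bit quick; your intermediate-process argument sidesteps this issue entirely, at the cost of one extra layer.

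For part~(ii) you proceed essentially as the paper does (the paper simply refers back to the proof of Theorem~\ref{theooptimal}); your discussion of the obstacle $\mathrm{d}|k^n|\rightharpoonup\mathrm{d}|k|$ and its resolution via equi-Lipschitzness of $\{|k^n|_\cdot\}$ and Lemma~5.8 of \cite{petit} is exactly the mechanism invoked there. You also supply the standard embedding/density argument for $\inf_{\mathcal{U}_{ad}}J=\inf_{\mathcal{R}}J$, which the paper states but does not spell out.
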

		\begin{proof}
			
			\begin{itemize}
				\item[i)] 
				We apply inequality \eqref{ineq_estim_quad} in Proposition \ref{estim_quad}, we get 
				\begin{align*}
					\mathbb{E}^{\mathbb{P}} \left( \sup_{0 \leq t \leq T} |X^n_t - X_t|^2 \right) + &\sup_{0 \leq t \leq T} |k^n_t - k_t|^2 \leq C\mathbb{E}^{\mathbb{P}} \left(\int_{0}^{T} \left|\sigma(s, X^n_s, \mathcal{L}_{X^n_s})-\sigma(s, X_s, \mathcal{L}_{X_s})\right|^2 \mathrm{d}s\right)\\
					&+ C\mathbb{E}^{\mathbb{P}} \left(\int_{0}^{T}\int_{\Gamma}^{} \left|\beta(s, X^n_s, \mathcal{L}_{X^n_s}, z)-\beta(s, X_s, \mathcal{L}_{X_s}, z)\right|^2 \lambda(\mathrm{d}z)\mathrm{d}s\right)\\ 
					&+ \mathbb{E}^{\mathbb{P}} \left(\int_{0}^{T} \left| \int_{\mathbb{A}}b(s, X^n_s, \mathcal{L}_{X^n_s}, u^n_s)\delta_{u_s^n}(\mathrm{d}a)-b(s, X_s, \mathcal{L}_{X_s}, a)q_s(\mathrm{d}a)\right|^2 \mathrm{d}s\right)
				\end{align*}
				By adding and subtracting $\displaystyle\int_{\mathbb{A}}b(t, X_t, \mathcal{L}_{X_t}, a)\delta_{u^n_t}(\mathrm{d}a)$ in the drift term, and using the Lipschitz assumptions on the coefficients, we fix $M>0$, we obtain
				\begin{align*}
					\mathbb{E}^{\mathbb{P}} &\left( \sup_{0 \leq t \leq T} |X^n_t - X_t|^2 \right) + \sup_{0 \leq t \leq T} |k^n_t - k_t|^2 \leq C \mathbb{E}^{\mathbb{P}} \left(\int_{0}^{T} \left| X^n_s - X_s \right |^{2} \mathrm{d}s \right)\\
					& + \mathbb{E}^{\mathbb{P}} \left(\int_{0}^{T} \left| \int_{\mathbb{A}}b(s, X_s, \mathcal{L}_{X_s}, a)\delta_{u^n_s}(\mathrm{d}a)-\int_{\mathbb{A}}b(s, X_s, \mathcal{L}_{X_s}, a) q_s(\mathrm{d}a)\right|^2 \mathds{1}_{{\{|X_{s}|\leq M\}}} \mathrm{d}s \right)\\
					&+\mathbb{E}^{\mathbb{P}} \left(\int_{0}^{T} \left| \int_{\mathbb{A}}b(s, X_s, \mathcal{L}_{X_s}, a)\delta_{u^n_s}(\mathrm{d}a)-\int_{\mathbb{A}}b(s, X_s, \mathcal{L}_{X_s}, a) q_s(\mathrm{d}a)\right|^2 \mathds{1}_{{\{|X_{s}|> M\}}} \mathrm{d}s \right)\\
					&= I_1^n+I_2^n+I_3^n.
				\end{align*}
				Since the mapping $(s,a) \mapsto b(s, X_s, \mathcal{L}_{X_s}, a) \mathds{1}_{\{|X_s| \leq M\}}$ is bounded and measurable, and continuous in $a$, it follows from Lemma~\ref{chattering lemma} that
				$$
				\mathds{1}_{\{|X_s| \leq M\}} \left| \int_{\mathbb{A}} b(s, X_s, \mathcal{L}_{X_s}, a) \, q_s(\mathrm{d}a) - \int_{\mathbb{A}} b(s, X_s, \mathcal{L}_{X_s}, a) \, \delta_{u_s^n}(\mathrm{d}a) \right| \to 0.
				$$
				By the linear growth assumption on $b$, the above expression is uniformly bounded. Applying the Dominated Convergence Theorem with respect to the product measure $\mathrm{d}\mathbb{P} \times \mathrm{d}t$, we conclude that $I_1^n \to 0$.
				
				On the other hand, since $q_s$ and $\delta_{u_s^n}$ are probability measures, and using the linear growth of $b$, we obtain
				
				$$
				I^n_2 \leq 2 \mathbb{E}^{\mathbb{P}} \left( \int_0^T \mathds{1}_{\{|X_t| > M\}} (1 + |X_t|^2) \, \mathrm{d}t \right).
				$$
				From the previous subsection, $\{{X_t,t\in [0,T]}\}$ is uniformly integrable, it follows that
				$$
				\lim_{M \to \infty} \mathbb{E}^{\mathbb{P}} \left( \int_0^T \mathds{1}_{\{|X_t| > M\}} (1 + |X_t|^2) \, \mathrm{d}t \right) = 0.
				$$
				Hence, $\lim\limits_{n\to\infty} I_n=0$. We conclude from  Gronwall's lemma.\\

				\item[ii)] We compute the following difference
				\begin{align*}
					J(\gamma^n) - J(\gamma) 
					&=\ \mathbb{E}^{\mathbb{P}} \left( \int_{0}^{T} \left( f(s, X^n_s, \mathcal{L}_{X^n_s}, u^n_s) - \int_{\mathbb{A}} f(s, X_s, \mathcal{L}_{X_s}, a)\, q_s(\mathrm{d}a) \right) \mathrm{d}s \right)\\
					&+\mathbb{E}^{\mathbb{P}} \left( \int_{0}^{T} \left( h(s, X^n_s, \mathcal{L}_{X^n_s})\, \mathrm{d}|k^n|_s - h(s, X_s, \mathcal{L}_{X_s})\right)\, \mathrm{d}|k|_s \right) \\
					&+\mathbb{E}^{\mathbb{P}} \left( g(X^n_T) - g(X_T)  \right).
				\end{align*}
				
				Using the result from point \textit{i)} and the same type of arguments as in the proof of Theorem \ref{theooptimal}, we conclude that
				$$
				\lim_{n \to \infty} J(\gamma^n) = J(\gamma).$$
			\end{itemize}
			
		\end{proof}
		\section{Appendix}
		We recall the definition of the Skorokhod problem with càdlàg inputs, reflected in a time-dependent, convex, and bounded domain.
		\begin{definition}\label{sk_dett}
			Let  $D=\{ D_t, t\in[0,T]\}$ be a family of time-dependent sets such that for every $t$,  $D_t\in\mathcal{C}$ and $t\to D_t$ is càdlàg with respect to Hausdorff metric. Let  $y\in\mathbb{D}([0,T],\mathbb{R}^d)$ such that
			$ y_{0}\in \mathrm{D}_0$. We say that a pair of functions $(x,k)\in\mathbb{D}([0,T],\mathbb{R}^{2d})$  is a solution to the reflection problem associated with $y$ if:
			
			\begin{itemize}	
				\item[i)]  $x_t=y_t+k_t,\; \text{for every} \;t \in [0,T]$,
				\item[ii)] $x_t \in D_t, \; \text{for every} \; t \in [0,T]$,
				\item[iii)] For every càdlàg $z$, such that $z_t\in D_t$, we have  for every $t\in [0,T]$
				$$\int_0^t \langle x_{s} - z_s,  \mathrm{d}k_s \rangle \leq 0.$$
			\end{itemize}
			This problem will be denoted by $RP_{D}(y)$.
		\end{definition}
		\begin{theorem}\label{prop_A}\cite{jarni25}
			Let  $D=\{ D_t, t\in[0,T]\}$ be a family of time-dependent sets such that for every $t\in [0,T]$,  $D_t\in\mathcal{C}$,  and $t\mapsto D_t$ is càdlàg with respect to Hausdorff metric. Let $y$ and $a$ be elements of  $\mathbb{D}([0,T],\mathbb{R}^d)$ such that $ y_{0}\in D_0$, for every $t\in[0,T]$, $a_t\in \mathring{D_t}$, and $\inf_{t\in[0,T]}d(a_t,\partial D_t)>0$.
			Then, there exists a unique solution of the problem $RP_{D}(y)$. In addition,
			\begin{equation}
				|k|_{T} \leq M( \sup_{s\in [0,T]}|y_s|^2+ \sup_{s\in [0,T]}|a_s|^2 +1). \label{estim_k_}
			\end{equation}
		\end{theorem}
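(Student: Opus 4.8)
\emph{Proof idea.} The plan is to establish, in turn, uniqueness, the a priori variation bound \eqref{estim_k_}, and existence, using throughout that the convexity of the $D_t$ makes the reflection a monotone operation, and that $t\mapsto D_t$ càdlàg on the compact interval $[0,T]$ forces $R:=\sup_{t\le T}\sup_{z\in D_t}|z|<\infty$, so that $|x_t|\le R$ and $|k_t|=|x_t-y_t|\le R+\sup_s|y_s|$ for any solution. Uniqueness comes first and is the softest part: if $(x,k)$ and $(x',k')$ both solve $RP_D(y)$, one may take $k_0=k_0'=0$ (as $y_0\in D_0$), so $x_0=x_0'$, and $x-x'=k-k'$ is càdlàg of finite variation. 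Testing (iii) for $(x,k)$ against the admissible competitor $z:=x'$, and (iii) for $(x',k')$ against $z:=x$, and adding, gives $\int_0^t\langle x_s-x_s',\mathrm{d}(k_s-k_s')\rangle\le 0$; applying the finite-variation identity $\int_0^t\langle g_s,\mathrm{d}g_s\rangle=\tfrac12(|g_t|^2-|g_0|^2+\sum_{s\le t}|\Delta g_s|^2)$ to $g:=x-x'$ then yields $|x_t-x_t'|^2\le 0$ for all $t$, hence $x\equiv x'$, $k\equiv k'$.

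For the estimate, set $r:=\inf_{t\le T}d(a_t,\partial D_t)>0$. As in Remark \ref{rq_iii} one writes $\mathrm{d}k_s=\eta_s\,\mathrm{d}|k|_s$ with $\eta_s$ a unit inward normal at $x_s$ on the set $\{x_s\in\partial D_s\}$, which carries all of $\mathrm{d}|k|$; since $\overline{B}(a_s,r)\subset D_s$, inserting the competitor point $a_s-r\eta_s\in D_s$ into the normal-cone inequality gives $\langle x_s-a_s,\eta_s\rangle\le-r$ there, whence $r\,|k|_t\le\int_0^t\langle a_s-x_s,\mathrm{d}k_s\rangle$. Substituting $x_s=y_s+k_s$ and using $\int_0^t\langle k_s,\mathrm{d}k_s\rangle\ge\tfrac12|k_t|^2$ reduces matters to bounding $\int_0^t\langle a_s-y_s,\mathrm{d}k_s\rangle$, which I would control uniformly in $y$ (allowing infinite variation) by a time discretisation: on each small subinterval one replaces $a_s,y_s$ by their values at a partition point, the errors being absorbed into the oscillations of $y$ and of $t\mapsto D_t$ and into the gained $\tfrac12|k_\cdot|^2$ term; telescoping over the partition and letting the mesh go to $0$ produces $r|k|_T\le M(\sup_s|y_s|^2+\sup_s|a_s|^2+1)$.

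For existence I would use a discretisation scheme: reduce first, by uniform approximation, to a $y$ with finitely many jumps; then, given a partition $\{t_i\}$, freeze the domain to the fixed bounded convex set $D_{t_{i-1}}$ on $[t_{i-1},t_i)$ and solve there the classical fixed-domain Skorokhod problem with normal reflection, and at each $t_i$ let the path jump to $x_{t_i}=\Pi_{D_{t_i}}(x_{t_i^-}+\Delta y_{t_i})$, recording the corresponding increment of $k$ (a Yosida/penalisation scheme, reflection replaced by a drift $-n(x-\Pi_{D_t}(x))$ with $n\to\infty$, is an alternative). The bound just proved is uniform along the scheme, and the non-expansiveness of the projections $\Pi_{D_t}$ together with the Hausdorff-continuity of $t\mapsto D_t$ give a uniform modulus of continuity in the $d_\circ$ sense; hence a subsequence converges in $\mathbb{D}([0,T],\mathbb{R}^{2d})$ to some $(x,k)$ with $k$ of finite variation, $x_t\in D_t$, and $x_t=y_t+k_t$.

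The main obstacle is verifying that this limit satisfies the minimality condition (iii): the finite-variation measures $\mathrm{d}k^n$ converge only weakly, so passing to the limit in $\int_0^t\langle x^n_s-z_s,\mathrm{d}k^n_s\rangle\le 0$ requires the convexity/monotonicity of the normal cones (a lower-semicontinuity property of the reflection functional) together with the uniform variation bound. Establishing the uniform $d_\circ$-modulus when the domain moves only càdlàg --- rather than with bounded variation --- is the other technical heart of the argument, and is precisely where the hypothesis $\inf_t d(a_t,\partial D_t)>0$ is needed.
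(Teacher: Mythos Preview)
The paper does not prove this theorem at all: it is stated in the Appendix with a citation to \cite{jarni25} and no argument is given, so there is no ``paper's own proof'' to compare against. What you have written is therefore an independent proof sketch, and I comment on it as such.

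Your uniqueness argument is correct and standard. Your existence outline (discretise, solve the fixed-domain problem on each piece, project at partition times, pass to the limit) is also the expected route, and you correctly isolate the two delicate points: obtaining a uniform $d_\circ$-modulus when $t\mapsto D_t$ is merely c\`adl\`ag, and passing to the limit in (iii) under only weak convergence of $\mathrm{d}k^n$.

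The real gap is in your derivation of the variation bound \eqref{estim_k_}. The inequality $r\,|k|_t\le\int_0^t\langle a_s-x_s,\mathrm{d}k_s\rangle$ is fine, and after substituting $x_s=y_s+k_s$ one indeed gains the term $\int_0^t\langle k_s,\mathrm{d}k_s\rangle\ge\tfrac12|k_t|^2$. But your plan for the remaining term $\int_0^t\langle a_s-y_s,\mathrm{d}k_s\rangle$ --- replace $a_s-y_s$ by a piecewise-constant approximant and ``absorb the errors into the oscillations of $y$ and of $t\mapsto D_t$ and into the gained $\tfrac12|k_\cdot|^2$ term'' --- does not go through as stated. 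The quantity $\tfrac12|k_T|^2$ is a single global term, not a telescoping sum over subintervals, so it cannot absorb an error on every subinterval. If instead one localises the whole inequality on each $[t_{i},t_{i+1}]$ and uses Young's inequality $\langle a_{t_i}-y_{t_i},\Delta_i k\rangle\le\tfrac12|a_{t_i}-y_{t_i}|^2+\tfrac12|\Delta_i k|^2$, the $\tfrac12|\Delta_i k|^2$ do cancel against the local energy terms, but summing leaves $\tfrac{r}{2}|k|_T\le \tfrac12\sum_i|a_{t_i}-y_{t_i}|^2$, which carries a factor $n$ (the number of partition points). Since the partition must be fine enough to make the oscillations of $a-y$ smaller than $r/2$, $n$ depends on the oscillation structure of $y$, not merely on $\sup_s|y_s|$; this does not yield a bound of the form \eqref{estim_k_} with a constant $M$ independent of $y$. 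An Abel summation instead produces the discrete variation of $a-y$, which again is not controlled by $\sup|y|$ for general c\`adl\`ag $y$. In short, the passage from the energy inequality to a quadratic bound in $\sup|y|$ and $\sup|a|$ requires a further idea that your sketch does not supply; you should consult \cite{jarni25} for the actual mechanism.
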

		\begin{proposition}\label{prop_B}\cite{jarni25}
			Let $\mathcal{D}=\{ D_t, t\in[0,T]\}$ and $\mathcal{\tilde{D}}=\{ \tilde{D}_t, t\in[0,T]\}$ be  two families of time-dependent sets such that for every $t\in [0,T]$, $\mathcal{D}_t$ and $\mathcal{\tilde{D}}_t$ belong to $\mathcal{C}$, $t\mapsto D_t$ and $t\mapsto \tilde{D}_t$ are càdlàg with respect to Hausdorff metric, $D_T=D_{T^-}$, and $\tilde{D}_T=\tilde{D}_{T^-}$. Let $y$ and $\tilde{y}$ be elements of $\mathbb{D}([0,T],\mathbb{R}^d)$ such that $y_0\in D_0$ and $\tilde{y}_0\in\tilde{D}_0$. Let $(x,k)$ and $(\tilde{x},\tilde{k})$ be the solutions associated to $RP_\mathcal{D}(y)$ and $RP_\mathcal{\tilde{D}}(\tilde{y})$, respectively.
			Then, for every $t\in [0,T]$, we have
			\begin{equation}
				|k_t-\tilde{k}_{t}|^2\leq 2\left(|y_t-\tilde{y}_{t}|^2 + \sup_{s\in [0,t]} d_H(D_s,\tilde{D}_s)(|k|_t+|\tilde{k}|_t)+\int_{0}^{t}\langle y_t-y_s+\tilde{y}_s-\tilde{y}_t,\mathrm{d}(k_s-\tilde{k}_s) \rangle\right).\label{ineq_estim}
			\end{equation}
		\end{proposition}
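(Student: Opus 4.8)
The plan is to treat \eqref{ineq_estim} as a purely deterministic a~priori bound for the Skorokhod problem of Definition~\ref{sk_dett} and to run the classical ``Tanaka'' computation: expand $|k_t-\tilde k_t|^2$ by integration by parts, decompose the increment $k-\tilde k=(x-\tilde x)-(y-\tilde y)$, and control the piece carried by $x-\tilde x$ by testing the minimality condition (iii) for each of the two problems against a comparison path obtained by projecting the \emph{other} solution onto the relevant time slice. Throughout one uses that $k$ and $\tilde k$ start from $0$ (because $y_0\in D_0$ and $\tilde y_0\in\tilde D_0$), the elementary bound $\bigl|\int_0^t\langle g_s,\mathrm dk_s\rangle\bigr|\le\int_0^t|g_s|\,\mathrm d|k|_s$ valid for any bounded integrand, and the joint continuity of the convex projection in the point and in the set for the Hausdorff metric, which will guarantee that the comparison paths introduced below are càdlàg and hence admissible in (iii).

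First, since $k_0-\tilde k_0=0$, integration by parts for càdlàg finite-variation paths gives
\[
|k_t-\tilde k_t|^2=2\int_0^t\langle k_s-\tilde k_s,\mathrm d(k_s-\tilde k_s)\rangle-\sum_{0<s\le t}|\Delta(k_s-\tilde k_s)|^2\le 2\int_0^t\langle k_s-\tilde k_s,\mathrm d(k_s-\tilde k_s)\rangle,
\]
where the integrand is evaluated at the right endpoint of each jump, consistently with the convention in (iii); the jump sum has the favourable sign and is discarded. Writing $k_s-\tilde k_s=(x_s-\tilde x_s)-(y_s-\tilde y_s)$ splits the right-hand side into $2\int_0^t\langle x_s-\tilde x_s,\mathrm d(k_s-\tilde k_s)\rangle$ and $-2\int_0^t\langle y_s-\tilde y_s,\mathrm d(k_s-\tilde k_s)\rangle$.

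For the first piece I would take $z_s:=\pi_{D_s}(\tilde x_s)$; this is càdlàg with $z_s\in D_s$, so (iii) for $RP_{\mathcal D}(y)$ gives $\int_0^t\langle x_s-z_s,\mathrm dk_s\rangle\le0$, whence, since $\tilde x_s\in\tilde D_s$,
\[
\int_0^t\langle x_s-\tilde x_s,\mathrm dk_s\rangle\le\int_0^t\langle z_s-\tilde x_s,\mathrm dk_s\rangle\le\int_0^t|z_s-\tilde x_s|\,\mathrm d|k|_s=\int_0^t d(\tilde x_s,D_s)\,\mathrm d|k|_s\le\sup_{s\le t}d_H(D_s,\tilde D_s)\,|k|_t.
\]
By the symmetric argument (projecting $x_s\in D_s$ onto $\tilde D_s$ and using (iii) for $RP_{\tilde{\mathcal D}}(\tilde y)$) one gets $-\int_0^t\langle x_s-\tilde x_s,\mathrm d\tilde k_s\rangle\le\sup_{s\le t}d_H(D_s,\tilde D_s)\,|\tilde k|_t$, so the first piece is at most $2\sup_{s\le t}d_H(D_s,\tilde D_s)(|k|_t+|\tilde k|_t)$. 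For the second piece, I would use $\int_0^t\mathrm d(k_s-\tilde k_s)=k_t-\tilde k_t$ and subtract the constant $y_t-\tilde y_t$:
\[
-\int_0^t\langle y_s-\tilde y_s,\mathrm d(k_s-\tilde k_s)\rangle=\int_0^t\langle y_t-y_s+\tilde y_s-\tilde y_t,\mathrm d(k_s-\tilde k_s)\rangle-\langle y_t-\tilde y_t,k_t-\tilde k_t\rangle,
\]
whose first term is exactly the integral appearing in \eqref{ineq_estim}; the cross term is absorbed by Young's inequality, $-\langle y_t-\tilde y_t,k_t-\tilde k_t\rangle\le |y_t-\tilde y_t|^2+\tfrac14|k_t-\tilde k_t|^2$. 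Collecting the three pieces and moving the resulting multiple of $|k_t-\tilde k_t|^2$ to the left-hand side yields an estimate of the form \eqref{ineq_estim}.

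The main obstacle I anticipate is the càdlàg/measurability bookkeeping rather than the analytic content: one must verify that $s\mapsto\pi_{D_s}(\tilde x_s)$ and $s\mapsto\pi_{\tilde D_s}(x_s)$ genuinely belong to $\mathbb D([0,T],\mathbb R^d)$ with values in the correct time slices, so that they qualify as test paths in condition (iii) --- this relies on the joint continuity of the convex projection in the point and in the Hausdorff metric of the set, together with the hypotheses $D_T=D_{T^-}$, $\tilde D_T=\tilde D_{T^-}$ --- and that the right-endpoint convention forced on the integrands by the integration-by-parts step is consistent with the convention in (iii), so that the discarded jump term really appears with a nonnegative sign.
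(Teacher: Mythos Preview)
Your approach is correct and is the standard Tanaka-type stability argument for the convex Skorokhod problem; the paper does not supply its own proof of this proposition but cites it from \cite{jarni25}, so there is no in-paper argument to compare against. The only minor discrepancy is that the Young-inequality absorption you describe, $-2\langle y_t-\tilde y_t,k_t-\tilde k_t\rangle\le 2|y_t-\tilde y_t|^2+\tfrac12|k_t-\tilde k_t|^2$, leaves a factor $\tfrac12$ on the left and hence produces the estimate with overall constant $4$ rather than the stated $2$; this is immaterial for every application in the paper (and indeed, when the bound is invoked in the proof of Proposition~\ref{estime11}, the paper itself writes $2\sup_s d_H$ inside the bracket, which is consistent with a constant $4$). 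The two technical checks you flag---joint continuity of the convex projection $(x,D)\mapsto\pi_D(x)$ in the point and in the Hausdorff metric so that $s\mapsto\pi_{D_s}(\tilde x_s)$ is an admissible càdlàg test path, and the compatibility of the right-endpoint convention in the integration-by-parts identity with the convention in condition~(iii)---are genuine but routine, and your plan for discharging them is the correct one.
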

		
		\begin{proposition}\label{estim_quad_Ap}
			Let $\mathcal{D}=\{ D_t,\; t\in[0,T]\}$ be a family of time-dependent sets such that for every $t\in [0,T]$, $D_t\in\mathcal{C}$ and $t\mapsto D_t$ is càdlàg with respect to the Hausdorff metric. Let $Y$ and $\tilde{Y}$ be two semimartingales with the following decompositions: $Y=M+V$ and $\tilde{Y}=\tilde{M}+\tilde{V}$, such that $Y_{0},\tilde{Y}_{0}\in D_0$. Let $p\geq 1$ and assume that $M$ and $\tilde{M}$ are martingales in $\mathcal{S}_{0,T}^{2p}$, $V$ and $\tilde{V}$ are càdlàg adapted processes with finite variation such that $|V|$ and $|\tilde{V}|$ have a finite $2p$-th moment.
			Let $(X,K)$ and $(\tilde{X},\tilde{K})$ be the solutions associated with $RP_{\mathcal{D}}(Y)$ and $RP_{\mathcal{D}}(\tilde{Y})$, respectively. Then, there exists a constant $C_p$ such that for all stopping times $\tau$ in $[0,T]$, we have
			\begin{multline*}
				\mathbb{E}\left(\sup_{0\leq t\leq \tau}|X_{t}-\tilde{X}_{t}|^{2p}\right)+\mathbb{E}\left(\sup_{0\leq t\leq \tau}|K_{t}-\tilde{K}_{t}|^{2p}\right)\\\leq C_p\mathbb{E}\left(|X_0-\tilde{X}_{0}|^{2p}+[M-\tilde{M}]_{\tau}^p+|V-\tilde{V}|_{\tau}^{2p}\right).
			\end{multline*}
		\end{proposition}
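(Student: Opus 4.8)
The plan is to follow the structure of the proof of Proposition~\ref{estim_quad}, but to work pathwise (there is no mean constraint here) and to invoke Proposition~\ref{prop_B} in place of Proposition~\ref{estime11}. Write $\Delta Y:=Y-\tilde Y$, $\Delta K:=K-\tilde K$, $\Delta X:=X-\tilde X=\Delta Y+\Delta K$, $\Delta M:=M-\tilde M$ and $\Delta V:=V-\tilde V$; since $K_0=\tilde K_0=0$ one has $\Delta X_0=\Delta Y_0=X_0-\tilde X_0$. First I would apply Proposition~\ref{prop_B} for each fixed $\omega$ with both families of domains equal to $\mathcal{D}$, so that the Hausdorff term disappears and, for every $t\in[0,T]$,
\[
|\Delta K_t|^2\leq 2|\Delta Y_t|^2+2\int_0^t\langle\Delta Y_t-\Delta Y_s,\mathrm{d}\Delta K_s\rangle .
\]
Combined with $\Delta X=\Delta Y+\Delta K$ this gives $|\Delta X_t|^{2p}+|\Delta K_t|^{2p}\leq C_p\bigl(|\Delta Y_t|^{2p}+\bigl|\int_0^t\langle\Delta Y_t-\Delta Y_s,\mathrm{d}\Delta K_s\rangle\bigr|^p\bigr)$.

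Next I would rewrite the cross term. Since $\Delta K$ is adapted, of finite variation and vanishes at $0$, integration by parts for semimartingales yields, $\mathbb{P}$-a.s.\ for every $t$ (the jump-covariation corrections cancelling),
\[
\int_0^t\langle\Delta Y_t-\Delta Y_s,\mathrm{d}\Delta K_s\rangle=\int_0^t\langle\Delta K_{s^-},\mathrm{d}\Delta Y_s\rangle=\int_0^t\langle\Delta X_{s^-}-\Delta Y_{s^-},\mathrm{d}\Delta M_s\rangle+\int_0^t\langle\Delta X_{s^-}-\Delta Y_{s^-},\mathrm{d}\Delta V_s\rangle,
\]
using $\Delta K=\Delta X-\Delta Y$ and $\Delta Y=\Delta M+\Delta V$. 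This is precisely the four-term decomposition appearing in the proof of Proposition~\ref{estim_quad}, with $l\circ l^{*}$ replaced by the identity and no expectation in front. I would then take $\sup_{t\leq\tau}$, take expectations, and estimate the four pieces. For the two integrals against $\mathrm{d}\Delta M$ I would apply the Burkholder--Davis--Gundy inequality and then Young's inequality $ab\leq\varepsilon a^2+C_\varepsilon b^2$ on the factor $\sup_{u\leq\tau}|\Delta X_u|$ (resp.\ $\sup_{u\leq\tau}|\Delta Y_u|\le\sup_{u\leq\tau}|\Delta M_u|+\sup_{u\leq\tau}|\Delta V_u|$), obtaining a bound by $\varepsilon\,\mathbb{E}(\sup_{t\leq\tau}|\Delta X_t|^{2p})+C_{p,\varepsilon}\mathbb{E}\bigl(|\Delta X_0|^{2p}+[\Delta M]_\tau^{p}+|\Delta V|_\tau^{2p}\bigr)$, where I also use $\sup_{u\leq\tau}|\Delta V_u|\leq|\Delta X_0|+|\Delta V|_\tau+[\Delta M]_\tau^{1/2}$ together with BDG for $\sup|\Delta M|$. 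The two integrals against $\mathrm{d}\Delta V$ are treated the same way without BDG, bounding each by $\sup_{u\leq\tau}|\Delta X_u|\,|\Delta V|_\tau$ resp.\ $\sup_{u\leq\tau}|\Delta Y_u|\,|\Delta V|_\tau$; and the term $\mathbb{E}(\sup_{t\leq\tau}|\Delta Y_t|^{2p})$ produced in the first step is handled the same way. Collecting everything, $\mathbb{E}(\sup_{t\leq\tau}|\Delta X_t|^{2p})\leq C\varepsilon\,\mathbb{E}(\sup_{t\leq\tau}|\Delta X_t|^{2p})+C_p\mathbb{E}(|\Delta X_0|^{2p}+[\Delta M]_\tau^{p}+|\Delta V|_\tau^{2p})$; choosing $\varepsilon$ small and absorbing gives the bound on $\mathbb{E}(\sup_{t\leq\tau}|\Delta X_t|^{2p})$, and then the bound on $\mathbb{E}(\sup_{t\leq\tau}|\Delta K_t|^{2p})$ follows from $\Delta K=\Delta X-\Delta Y$.

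The main obstacle is making the absorption rigorous: it presupposes $\mathbb{E}(\sup_{t\leq\tau}|\Delta X_t|^{2p})<\infty$, which does not follow directly from the hypotheses, since the bound \eqref{estim_k_} only controls a lower moment of $|K|_T$ in terms of $\sup|Y|^2$. I would handle this by running the whole argument with $\tau$ replaced by $\tau\wedge\tau_n$, where $\tau_n:=\inf\{t:|X_t|+|\tilde X_t|+|K|_t+|\tilde K|_t>n\}$, so that all quantities are bounded and the absorption is valid for each $n$, and then letting $n\to\infty$ using Fatou's lemma on the left-hand side and monotone (resp.\ dominated) convergence on the right-hand side, relying on the finite $2p$-th moment assumptions on $M,\tilde M,V,\tilde V$. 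A secondary point is that the first identity in the second step equates a pathwise Lebesgue--Stieltjes integral with an expression containing a genuine stochastic integral; this is legitimate because $\Delta K_{s^-}$ is predictable and of finite variation, so the integration-by-parts formula applies and the equality holds up to a $\mathbb{P}$-null set.
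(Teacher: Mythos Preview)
Your proposal is correct and follows essentially the same route the paper indicates: the paper's proof is just a pointer, saying that the $p=1$ case is established in \cite{jarni25} and that the extension to $p\ge1$ goes by the same computation as in Proposition~\ref{estim_quad}; you have carried out precisely that computation, replacing Proposition~\ref{estime11} by the pathwise estimate of Proposition~\ref{prop_B} and $l\circ l^*$ by the identity. Your localization step to justify the absorption of $\mathbb{E}(\sup_{t\le\tau}|\Delta X_t|^{2p})$ is a genuine point that the paper's terse proof leaves implicit, and your handling of it via $\tau\wedge\tau_n$ and Fatou is the standard and correct fix.
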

		\begin{proof}
			When $p=1$ the result is shown in Proposition \eqref{estim_quad} in \cite{jarni25}. It can be extended for $p\geq 1$ by similar arguments as in the proof of Proposition \eqref{estim_quad}.
		\end{proof}

	\end{document}